%% file: main.tex
\documentclass[a4paper,10pt,final]{amsart}
\usepackage[utf8]{inputenc}
\usepackage{fouriernc, amsmath, amsthm, amsfonts, amssymb,epsfig,enumerate, wrapfig, scalerel, tikz,  color, accents, fancybox, rotating, comment, afterpage, changepage, colortbl,  pdfpages, datetime, fancyhdr, float, xlop, blkarray, stmaryrd, hyperref, bookmark, cleveref, setspace, ifthen, tikz-3dplot, pdflscape, leftindex}
\title{A foamy approach to Soergel bimodules} \author{Mikhail Khovanov} 
 \address{Department of Mathematics, Johns Hopkins University, Baltimore, MD 21218, USA}
 \email{\href{mailto:khovanov@math.columbia.edu}{khovanov@jhu.edu}}
 \author{Louis-Hadrien Robert}
 \address{Université Clermont Auvergne, LMBP, Campus des
   Cézeaux, 3 place Vasarely, TSA 60026 -- CS 60026, 63178 Aubière Cedex, France}
 \email{\href{mailto:louis-hadrien.robert@uca.fr}{louis-hadrien.robert@uca.fr}}
 \author{Emmanuel Wagner}
 \address{Univ Paris Cit\'e, IMJ-PRG, Univ Paris Sorbonne, UMR 7586 CNRS, F-75013, Paris,
France}
 \email{\href{mailto:wagner@imj-prg.fr}{wagner@imj-prg.fr}}
 \hypersetup{pdftoolbar=true, pdfmenubar=true, pdffitwindow=false, pdfstartview={FitH}, pdftitle={\shorttitle}, pdfauthor={\shortauthors}, pdfsubject={\shorttitle}, pdfcreator={\shortauthors}, pdfproducer={\shortauthors}, pdfkeywords={}, pdfnewwindow=true, colorlinks=true, linkcolor=darkgray, citecolor=teal, filecolor=magenta, urlcolor=violet, linkbordercolor=red, citebordercolor=teal, urlbordercolor=violet, linktocpage=true}
    
\usetikzlibrary{arrows, decorations.markings, decorations, patterns,
positioning, decorations.pathreplacing, decorations.pathmorphing, external}
\tikzset{->-/.style={decoration={markings, mark=at position .5 with {\arrow{>}}},postaction={decorate}}}
\tikzset{-<-/.style={decoration={markings, mark=at position .5 with {\arrow{<}}},postaction={decorate}}}
\let\oldtocsubsection\tocsubsection
\renewcommand\tocsubsection[3]{\hspace{0.5cm}\oldtocsubsection{#1}{#2}{#3}}
\let\oldtocsubsubsection\tocsubsubsection
\renewcommand\tocsubsubsection[3]{\hspace{1cm}\oldtocsubsubsection{#1}{#2}{#3}}

\newcounter{res}[section]
\numberwithin{res}{section}
\newtheorem{thm}[res]{Theorem}
\newtheorem*{theo}{Theorem}

\newtheorem{lem}[res]{Lemma}
\newtheorem{lem-dfn}[res]{Lemma-Definition}

\newtheorem{prop}[res]{Proposition}
\newtheorem{cor}[res]{Corollary}

\theoremstyle{definition}
\newtheorem{notation}[res]{Notation}
\newtheorem{dfn}[res]{Definition}
\newtheorem{rmk}[res]{Remark}
\newtheorem{exa}[res]{Example}

 \let\oldmarginpar\marginpar
\newcommand\marginparL[1]{\oldmarginpar{\color{red}\fbox{\begin{minipage}{3cm}
  \footnotesize #1 \end{minipage}}}}

\newcommand\LHR[1]{{\bf\color{red}[LHR: #1]}}

\DeclareMathOperator{\shift}{\mathrm{sh}}
\DeclareMathOperator{\thickness}{\mathrm{th}}
\newcommand{\fone}[1][]{\ensuremath{\mathsf{\mathcal{F}}\ifstrempty{#1}{}{\left({#1}\right)}}}
\newcommand{\disk}{\ensuremath{\mathbb{D}}}
\newcommand{\myk}{\ensuremath{\mathtt{k}}}
\newcommand{\myN}{\ensuremath{\mathtt{N}}}
\newcommand{\scalars}{\ensuremath{R}}
\newcommand{\surface}{\ensuremath{\Sigma}}
\newcommand{\foam}{\ensuremath{F}}
\newcommand{\web}{\ensuremath{\Gamma}}
\newcommand{\ui}{\ensuremath{I}}
\newcommand{\algebra}[1][]{\ensuremath{\mathbb{A}\ifstrempty{#1}{}{_{#1}}}}
  \newcommand{\sg}[1][]{\ensuremath{\mathfrak{S}\ifstrempty{#1}{}{_{#1}}}}
    \newcommand{\comp}[1][]{\ensuremath{\underline{\ifstrempty{#1}{k}{#1}}}}

      \newcommand{\pos}[1][]{\ensuremath{\underline{\ifstrempty{#1}{p}{#1}}}}
        \newcommand{\sbim}[2]{\ensuremath{\leftindex_{#1}{B}_{#2}}}
        \newcommand{\vertex}[2]{\ensuremath{\leftindex_{#1}{V}_{#2}}}
        \newcommand{\iweb}[2]{\ensuremath{\leftindex_{#1}{I}_{#2}}}
        \newcommand{\tSSoergel}[2]{\ensuremath{\ifstrempty{#1}{}{#1\textrm{-}}\widetilde{\mathsf{SSoe}}\ifstrempty{#2}{}{_{#2}}}}
\newcommand{\tSSoe}{\tSSoergel{}{}}
\newcommand{\SSoergel}[2]{\ensuremath{\ifstrempty{#1}{}{#1\textrm{-}}{\mathsf{SSoe}}\ifstrempty{#2}{}{_{#2}}}}
    \newcommand{\SSoe}{\SSoergel{}{}}
    \newcommand{\blweb}[1][]{\ensuremath{\mathbf{BlWeb}\ifstrempty{#1}{}{_{#1}}}}
      
    \newcommand{\TblFoam}[1][]{\ensuremath{\mathsf{TopBlFoam}\ifstrempty{#1}{}{_{#1}}}}
    \newcommand{\blFoam}[1][]{\ensuremath{\mathsf{blFoam}\ifstrempty{#1}{}{_{#1}}}}
      \newcommand{\rep}[1][]{\ensuremath{\mathsf{Rep}\ifstrempty{#1}{}{\left({#1}\right)}}}
      \newcommand{\hTr}[1][]{\ensuremath{\mathrm{hTr}\ifstrempty{#1}{}{\left({#1}\right)}}}
        \newcommand{\tree}[1][\pos]{\ensuremath{\mathsf{Y}\ifstrempty{#1}{}{_{#1}}}}
  
\def\co{\colon\thinspace}

\newcommand{\NB}[1]{\ensuremath{\vcenter{\hbox{#1}}}}

\newcommand{\NN}{\ensuremath{\mathbb{N}}}
\newcommand{\ZZ}{\ensuremath{\mathbb{Z}}}

\newcommand{\QQ}{\ensuremath{\mathbb{Q}}}
\newcommand{\RR}{\ensuremath{\mathbb{R}}}
\newcommand{\DD}{\ensuremath{\mathbb{D}}}

\renewcommand{\SS}{\ensuremath{\mathbb{S}}}

  \newcommand{\id}{\mathrm{Id}}

\newcommand{\Hom}{{\mathrm{Hom}}}
\newcommand{\HOM}{{\mathrm{HOM}}}

\newcommand{\gll}{\ensuremath{\mathfrak{gl}}}

 \newcommand{\rk}{\mathrm{rk}}

\newcommand{\listk}[1]{\ensuremath{\underline{#1}}}

\renewcommand{\deg}[2][{}]{\ensuremath{\mathrm{deg}_{#1}(#2)}}

\newcommand{\BS}{\ensuremath{\mathcal{B}}}

\newcommand{\Qalggr}{\ensuremath{\QQ\textrm{-}\mathsf{Alg}_{\mathrm{gr}}}}

\newcommand{\Soergel}{\ensuremath{\mathsf{Soergel}}}

\newcommand{\seq}{\ensuremath{\mathrm{seq}}}

 \newcommand{\imagesfolder}{.}
\allowdisplaybreaks
\input{\imagesfolder/digon}

\input{\imagesfolder/assoc}

\input{\imagesfolder/square}

\begin{document}
\begin{abstract}
  The aim of this  short note is to establish a 2-equivalence between a certain 2-category of foams and that of singular Soergel bimodules of type A.

\end{abstract}
\maketitle
\setcounter{tocdepth}{2}
\tableofcontents

\section{Introduction} 
\label{sec:introduction}
Link homology theories have been developed and studied in the last three decades. They categorify quantum link invariants.  Different tools have been used to construct these homology theories. One of the most powerful but still somewhat mysterious theories is the HOMFLY-PT homology, also known as  the triply graded homology~\cite{KR}. It is defined using Hochschild homology of type $\mathsf{A}$ Soergel bimodules~\cite{KhHoch}.

HOMFLY-PT homology theory categorifies the two-variable HOMFLY-PT polynomial~\cite{HOMFLY, PT}, which can be specialized to a one-variable $\gll_\myN$-polynomial link invariant for any non-negative integer $\myN$. The latter is the Reshetikhin--Turaev quantum invariant for the quantum deformation  of the Lie algebra $\gll_\myN$ or $\mathsf{sl}_\myN$ and its fundamental representation \cite{MR939474,RT1}. Bigraded $\gll_\myN$ Khovanov--Rozansky link homology~\cite{KR0} is a categorification of the one-variable $\gll_\myN$-link invariant. This homology theory  was originally defined using matrix factorizations, but it has since been redefined in terms of foams~\cite{MR2443231,MR2491657},  which are suitable cobordisms between planar graphs. 
This construction used the Kapustin-Li evaluation formula for surfaces~\cite{MR2039036}, extended to foams in~\cite{MR2322554}.

The relationship between the HOMFLY-PT polynomial and the one-variable $\gll_\myN$-link invariant is categorified via the Rasmussen spectral sequence \cite{somediff}.

The Soergel category admits a \emph{two-dimensional} diagrammatical description that was discovered in type $\mathsf{A}$ in~\cite{EliasKhovanov}, see also~\cite{MR3502025} for the extension to the singular Soergel 2-category. In that description morphisms between tensor products of generating bimodules (Bott-Samelson bimodules) in the Soergel category are given by linear combinations of suitable diagrams modulo defining relations. Mackaay and Vaz~\cite{MR2671770} found a 3-dimensional lifting of that description, where Bott-Samelson tensor products of bimodules are represented by \emph{braid-like webs}  in the plane, while morphisms between them are given by linear combinations of 3-dimensional structures. These structures are \emph{braid-like foams} in $\RR^3$ with specific boundary conditions. More precisely, Mackaay and Vaz construct a collection of functors from the type $\mathsf{A}$ Soergel category to a suitable foam category (of braid-like foams) and show that the union of these functors is injective on morphisms. One can think of the foam description as a three-dimensional presentation of the Soergel category, while the diagrammatic description in~\cite{EliasKhovanov} is two-dimensional, and can be informally described as a \emph{holographic encoding} of the foam description. 

Another approach to foams was developed via  categorified Howe duality in~\cite{1212.6076,queffelec2014mathfrak,QRAnnular,RW, MR3906545} and many other papers. 
Here one restricts to \emph{ladder-like} or braid-like webs and foam cobordisms between them. Nondegeneracy of the resulting 3D structures follows via the comparison to the KLR algebras and full categorified quantum groups.  In this approach connection of foams to the Soergel category was established by Wedrich~\cite{2016arXiv160202769W}, who defined a 2-functor from the 2-category of ladder-like webs and foams to the 2-category of singular Soergel bimodules. This idea was underlying many of the developments afterwards, see \cite{RW2, qi2021categorificationcoloredjonespolynomial}.

The third approach to developing foam theory is via the Robert--Wagner foam evaluation formula~\cite{RW1,RW2}.
The present note relies on this approach. The aim of this note is to establish a 2-equivalence between the 2-category of singular Soergel bimodules and the 2-category of braid-like foams (called \emph{bl-foams} here). This solves the conjecture in \cite[Section 4.3]{RW2}, see  Theorem~\ref{thm:main} for a precise statement, and enhances the relation between foams and Soergel bimodules discovered and developed  in~\cite{MR2671770,2016arXiv160202769W}.

The main new result in the present paper concerns 2-morphisms in those two 2-categories and can be rephrased as follows:

 \begin{theo}
     Any morphism between singular Soergel bimodules can be uniquely realized as a linear combination of foams modulo $\infty$-equivalence.
\end{theo}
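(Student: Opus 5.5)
The plan is to build a 2-functor $\fone$ from the 2-category of bl-foams (modulo $\infty$-equivalence) to singular Soergel bimodules. On 1-morphisms it will agree with Wedrich's functor~\cite{2016arXiv160202769W}. On 2-morphisms it will be defined through the Robert--Wagner evaluation~\cite{RW1,RW2}. We then show that $\fone$ is fully faithful on 2-morphisms; this is the content of the theorem.

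\textbf{Construction of $\fone$.} A bl-web $\web$ from $\comp$ to $\comp'$ is sent to the singular Bott--Samelson bimodule obtained by tensoring the elementary merge and split bimodules over the appropriate partially symmetric polynomial rings. To each bl-web we attach its state space $\fone(\web)$, defined by the universal construction from the foam evaluation. The first step is to identify $\fone(\web)$ with this Bott--Samelson bimodule, using the known facts from~\cite{RW2}:
\begin{enumerate}[(i)]
\item the state space of a "tree" web is the ring of partially symmetric polynomials;
\item digon, associativity and square relations hold in foams, and they match the corresponding isomorphisms of singular Soergel bimodules.
\end{enumerate}
Gluing foams then gives the 2-functoriality. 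Faithfulness is automatic, because $\infty$-equivalence is exactly the kernel of the pairing that defines the universal construction.

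\textbf{Fullness.} This is the main obstacle. We must show that every bimodule map $B_\web\to B_{\web'}$ is realized by a linear combination of foams. I would reduce to the case where the source is the identity web. Bending (the adjunction foams for merges and splits, i.e.\ the biadjointness of restriction and induction up to grading shift) identifies $\Hom(B_\web,B_{\web'})$ with $\Hom(R_{\comp},B_{\overline{\web}\circ\web'})$, or after closing up, with a space of invariants inside a single Bott--Samelson bimodule. The same bending is available on the foam side. It therefore suffices to show that elements of $\fone(\Gamma)$ coming from foams span the relevant Hom space.

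On the foam side, the state space is spanned by foams with dots. On the bimodule side, the Hom space from the identity web is the space of elements $x$ of $B_\Gamma$ satisfying $fx=xf$. By (ii), each bl-web decomposes, via foam isomorphisms, into a direct sum of shifted elementary webs indexed by singular Soergel indecomposables. So it is enough to check that elements realized by foams span all of $B_\Gamma$ on these basic pieces: every generator $1\otimes\cdots\otimes 1$ times polynomials is realized by dotted cup and cap foams.

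\textbf{Fallback argument.} If the decomposition step is awkward, I would instead compare graded ranks. The foam side has rank at least the rank of the image. The Hom spaces between singular Soergel bimodules are free, with graded rank given by Williamson's singular Soergel hom formula (the standard pairing on the Hecke algebroid). The evaluation pairing on foams computes this same pairing on webs, via the relations in~\cite{RW2}. Faithfulness combined with equality of graded dimensions then forces surjectivity.

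Finally, essential surjectivity on 1-morphisms holds because singular Soergel bimodules are direct summands of Bott--Samelson ones. We pass to Karoubi envelopes on the foam side, and idempotents lift by fullness. Uniqueness of the realization modulo $\infty$-equivalence is the faithfulness established in the construction step.
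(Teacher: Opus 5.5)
There is a genuine gap, and it is the sentence ``Faithfulness is automatic.'' Uniqueness in the theorem is exactly injectivity of $\mathsf{Rep}$ on $2$-morphisms, and the paper singles this out as the non-trivial direction. A $2$-morphism $F\colon\Gamma_0\to\Gamma_1$ lives in the cube. Its boundary $\partial F$ is one connected closed web on $\partial C$: $-\Gamma_0$ on $s_b$, $\Gamma_1$ on $s_t$, and vertical strands on $s_l$ and $s_r$. Saying $F\sim_\infty 0$ means $\tau_N(H\circ F)=0$ for \emph{every} foam $H$ with boundary $-\partial F$ and every $N$. The bimodule map $\mathsf{Rep}(F)$ only records the classes of $F(E)$ for element-type foams $E$. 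In other words, it records only the pairings of $F$ against the special complements $H'\cup E$, which cap $\Gamma_0$ off by an element-type foam and so funnel the bottom of $F$ through the trees $\mathsf{Y}_{\underline{p}^0}$, $\mathsf{Y}_{\underline{p}^1}$ and the thick $\mathtt{k}$-strand. That these special complements detect $F$ does not follow from the definition of $\infty$-equivalence. There is no neck-cutting principle that lets you cut an arbitrary complement $H$ along the trees.

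The paper proves injectivity by normalizing $F$ itself, in four steps:
\begin{itemize}
\item Bending and the Queffelec--Rose algorithm (Proposition~\ref{prop:QR}) reduce to $\Gamma_0=\Gamma_1=\mathrm{id}_{\underline{p}}$.
\item One removes a thin tube $U$, so that $F\setminus U$ is a vinyl foam in a thickened annulus.
\item The Beliakova--Habiro--Lauda--Webster trace theorem (Proposition~\ref{prop:trace-like}) writes this foam as a combination of decorated copies of $\Gamma\times\mathbb{S}^1$.
\item Bubble and diabolo removal (Lemmas~\ref{lem:bubble} and~\ref{lem:diabolo}) reduce the capped-off pieces to decorated disks. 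Their images in $\mathrm{End}(\mathbb{A}_{\underline{k}})\cong\mathbb{A}_{\underline{k}}$ are visibly independent.
\end{itemize}
Some argument of this strength is indispensable, and your proposal does not contain one.

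Conversely, the direction you call the main obstacle is the easy one: in the reduced case, decorated disks already realize every endomorphism of $\mathbb{A}_{\underline{k}}$. Your route to it does not work as stated. The isomorphisms of Proposition~\ref{prop:iso-web} do not, in general, decompose a planar bl-web into pieces indexed by indecomposable singular Soergel bimodules, and those indecomposables are in general not given by webs at all. A complete reduction to standard objects, the concentric circles $\mathbb{S}_{\underline{k}}$, exists only in the horizontal trace. That is why the paper bends the problem into an annular one before decomposing. Also, $\mathrm{Hom}(\mathbb{A}_{\underline{k}},B_\Gamma)$ is the centralizer $\{x : fx=xf\}$, not all of $B_\Gamma$. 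Knowing that element-type foams span $B_\Gamma$ (Lemma~\ref{lem:foamy-soergel-1}) therefore does not produce morphism-type foams realizing every bimodule map. Finally, the dimension-count fallback relies on the unproven faithfulness for its upper bound. It also needs an equality of graded dimensions for which you give no argument: nothing computes the graded dimension of bl-foams modulo $\infty$-equivalence independently.
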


To finish, let us mention that the strengh of the foam perspective on singular Soergel bimodules is that it can drastically simplify algebraic computations, by further interpreting elements of singular Soergel bimodules themselves as foams, see Section \ref{sec:repr-2-funct} for this representation functor. Computations with foams motivated definitions and constructions in~\cite{qi2021categorificationcoloredjonespolynomial}.  In particular, this gives a way to easily obtain complicated explicit formulas for bimodule maps, and we give a sample example in Figure~\ref{fig:sample}. We believe this perspective can be pushed further, and we plan to do so in future work. 

We do not pass to the Karoubi completion in our construction, and what we call the singular Soergel 2-category may also be called the singular Bott--Samelson bimodule 2-category.

\begin{figure}[ht]
  \centering
  \NB{\tikz[]{\input{\imagesfolder/hhf_sample0}}}\quad
  $\leftrightsquigarrow$\quad
  \NB{\tikz[]{\input{\imagesfolder/hhf_sample}}} \hspace{4cm}\NB{\tikz[]{\input{\imagesfolder/hhf_sample3}}} \\[0.5cm]
  \NB{\tikz[]{\input{\imagesfolder/hhf_sample5}}}\quad$=$ \quad
  \NB{\tikz[]{\input{\imagesfolder/hhf_sample4}}}\quad   $\leftrightsquigarrow$\quad
  \NB{\tikz[]{\input{\imagesfolder/hhf_sample00}}}
  
  \caption{On the top row, the left part illustrates the correspondence between an element of a Soergel bimodule and its foamy counterpart: a tree-like bl-foam of elementtype, on the right, an example of a bl-foam of morphism type.  On the second row, the leftmost picture represents the bl-foam of element-type which is  the image of the (foamy representant) of the element of the Soergel bimodule of the top row by the foamy morphism of the top row. The equality is up to $\infty$-equivalence with $R$ being the symmetric polynomial {$\displaystyle{ \sum_{\substack{I\sqcup J = \{1, \dots, a+b\} \\ \#I = a, \#J =b}} \frac{P(x_I)Q(x_J)}{\prod_{\substack{i \in I \\ j \in J}}(x_i - x_j)}}$ (with $P$ (resp.{} $Q$) on a facet of thickness $a$ (resp.{} $b$))}.}
  \label{fig:sample}
\end{figure}
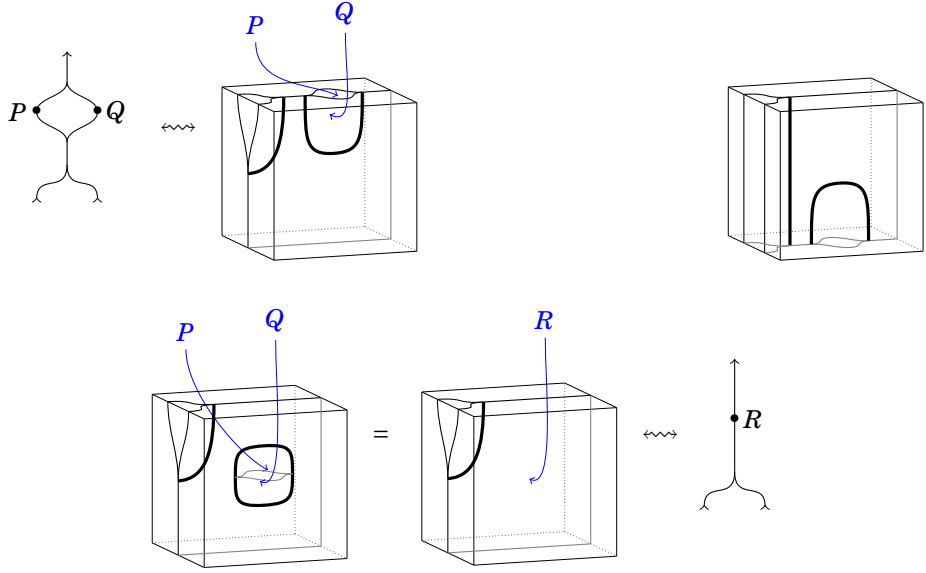

Besides this introduction, the paper is divided into three Sections. Section~\ref{sec:foams} presents the 2-category of bl-foams (\emph{braid-like foams}), Section~\ref{sec:Soergel}  presents that of singular Soergel bimodules (in type $\mathsf{A}$). Finally, Section~\ref{sec:2E} establishes the equivalence of 2-categories.

\subsection*{Acknowledgments}
M.K. was partially supported by NSF grant DMS-2204033 and Simons
Collaboration Award 994328 ``New Structures in Low-Dimensional Topology''.
L.-H.~R. was not supported by any project related funding. E.W. was partially supported by the ANR project Caget, ANR-25-CE40-4162. 

\subsection*{Conventions}

For symmetry reasons we grade spaces and homs between them by $\frac12\ZZ$ instead of $\ZZ$.

If $M=\bigoplus_{i \in \frac12 \ZZ} M_i$ is an object in a graded additive category and $s$ a half-integer, $q^{s}M$ denotes the graded object $M$ shifted up by $s$, in the sense that $(q^{s}M)_i= M_{i-s}$. More generally, if $P=\sum_{s}a_s q^{s} \in \ZZ_{\geq0}[q^{\pm\frac12}]$, define:
\[P(q)M := \bigoplus_s (q^{s}M)^{\oplus a_s}.\]
The balanced version of the quantum binomial is used throughout this paper, so that $[2]= q^{-1}+q$.

\section{Webs and foams} \label{sec:foams}

\subsection{(Com)positions}

Throughout the paper, $\myk$ denotes a non-negative integer. Recall that a \emph{composition of $\myk$} is a finite sequence of positive integers which sum up to $\myk$. Except otherwise stated, compositions are compositions of $\myk$. If $\comp=(k_1,\dots, k_\ell)$ is a composition, the integer $\shift(\comp)=\sum_{1\leq i<j\leq \ell} k_ik_j = \frac{1}{2} \sum_{i=1}^\ell k_i(\myk - k_i)$ is the \emph{shift} of $\comp$. The composition which has $\myk$ ones is denoted $1^\myk$. It has length $\myk$ and shift $\frac{\myk(\myk-1)}{2}$. The composition $(k)$, which has length $1$, is denoted $\comp[\myk]$ and has shift $0$.

A composition $\comp'$ is a \emph{decomposition} of a composition $\comp=(k_1, \dots, k_\ell)$ if $\comp'$ can be obtained by concatenation of compositions of $k_1,\dots, k_\ell$ {preserving their order}. In particular, the length of $\comp'$ is greater than or equal to $\comp$ and $\shift(\comp')$ is greater than or equal to $\shift(\comp)$. The composition $1^{\myk}$ is a decomposition of any composition, and any composition is a decomposition of $\comp[\myk]$. If the length $\ell'$ of decomposition $\comp'$ of $\comp$ is equal to $\ell+1$, with $\ell$ being the length of $\comp$, the decomposition $\comp'$ is  \emph{elementary}. For instance, $(3,4,1,2,4)$ is an elementary decompostion of $(3,4,3,4)$.

A \emph{position} is a pair $(\comp, \pos)$ where $\comp=(k_1,\dots, k_\ell)$ is a composition of $\myk$ and $\pos=(p_1, \dots, p_\ell)$ is a strictly increasing sequence of points in  $(0,1)$ of length $\ell$. One should think of the point $p_i$ carrying the integer $k_i$,  and we will therefore often denote the position $(\comp, \pos)$ by $\pos$. With this notation we define $\pi(\pos) := \comp$ to be ``composition part'' of the position $\pos$. The \emph{shift} of a position is that of its composition part $\pi(\pos)$, see above.

\subsection{Webs}
\subsubsection{Generalities}
A \emph{partially collared surface} is a pair $(\surface, \iota_\surface)$ where $\surface$ is a surface with a smooth embedding $\iota_\surface: \partial_c \surface \times [0;1[ \to \surface$, mapping $\partial_c \surface \times \{0\}$ identically to $\partial_c \surface$, where $\partial_c \surface$ is a codimension 0 closed sub-manifold of $\partial \surface$. 

Let $\surface$ be a partially collared surface. A \emph{web} in $\surface$ is an 
oriented uni-trivalent graph $\web$ smoothly embedded in $\surface$ with a thickness function $\thickness\co E(\web) \to \NN_{>0}$ satisfying a flow condition and a boundary condition:
\begin{itemize}
\item Flow condition: each trivalent vertex should follow one of two following local models:
\[
\NB{\tikz[]{\input{\imagesfolder/sym_MOYvertex}}}
\]
Note that this is a condition on the function $\thickness$ and on the way $\web$ embeds  in $\surface$.
\item Boundary condition: the set of univalent vertices (denoted by $\partial \web$) is in $\partial_c \surface$ (i.e.{} the web $\web$ is properly embedded in $\surface$)  and for an $\epsilon >0$, $\web \cap \iota_\surface\big(\partial \surface \times[0;\epsilon)\big)= \iota_\surface\big((\partial\web)\times [0;\epsilon) \big).$ This conditions means that, in a neighborhood of its boundary, $\web$ is embedded in a way that is compatible with the collar of the surface. This enables gluing webs without troubles.  Note that the thickness function of $\web$ endowes each point of $\partial \web$ with a thickness. 
\end{itemize}

Suppose that $\surface$ is endowed with a Riemannian metric $g$. Then the condition imposed on the local models of trivalent vertices of a web $\web$ is so that the orientation of $\web$ defines a unit tangent vector $u_x$ at each point $x$ of $\web \subseteq \surface$. Suppose furthermore that $\surface$ is endowed with a vector field $V$.
A web $\web$  on $(\surface, g, V)$ is \emph{directed} if for all  $x\in \web$, $g(u_x, V_x)>0$.

Let $\ui$ denotes the standard unit interval $[0,1]$. In what follows we are interested in directed webs in $\disk:=\ui\times \ui\subseteq \RR^2$ with partial collar $\partial_c\disk = \ui\times\{0,1\}$, endowed with the standard Riemannian metric coming from the standard scalar product in $\RR^2$ and $V= \partial_y$. We call such webs  \emph{braid-like webs} or \emph{bl-webs}, for short. The boundary of a bl-web $\web$ consists of a position $\pos[p^0]$ in $\ui\times \{0\}$ and a position $\pos[p^1]$ in $\ui\times \{1\}$. One says that $\web$ is a bl-web from $\pos[p^0]$ to $\pos[p^1]$.  The directedness implies that these two positions are that of the same integer $\myk$.

\begin{figure}[ht]
  \centering
  \NB{\tikz[]{\input{\imagesfolder/hhf_bl-web-example}}}
  \caption{Example of a bl-web with $\myk= 11$. It is a morphism from $(4,5,2)$ to $(2, 4,2,3)$. }
  \label{fig:bl-web-exa}
\end{figure}

A bl-web such that all its trivalent vertices have different heights\footnote{By \emph{height} we mean second coordinate in $\ui^2$.} is said to be \emph{well-presented}.

For a later use, it convenient to introduce shifts for bl-webs: if $v$ is a merge vertex of a bl-web merging edges of thicknesses $a$ and $b$ into an edge of thickness $a+b$ or splitting an edge of thickness $a+b$ into edges of thickness $a$ and $b$, define the \emph{shift} of $v$ to be the element of $\frac12\ZZ$ denoted by $\shift(v)$ and equal to $\frac{ab}2$. The \emph{shift} of a bl-web $\web$ is denoted by $\shift(\web)$ and is equal to the sum of shifts of its vertices.

\subsubsection{The $1$-category of bl-webs}

One can consider the category $\blweb[\myk]$ of bl-webs of level $\myk$ where:
\begin{itemize}
\item Objects are positions of $\myk$.
\item Morphisms from position $\pos[p^0]$ to position $\pos[p^1]$ are bl-webs from $\pos[p^0]$ to $\pos[p^1]$, considered up to ambient isotopy (among bl-webs) rel boundary. 
\item Composition is given by stacking and rescaling. It is worth noticing that the (partially) collared condition ensures that the composition of two bl-webs is smooth.
\end{itemize}

\begin{rmk}
  \begin{enumerate}
  \item If a bl-web $\web: \pos[p^0] \to \pos[p^1]$ has no vertex, then $\pi(\pos[p^0])$ and $\pi(\pos[p^1])$ are equal and $\web$ is an isomorphism in $\blweb[\myk]$ (this isomorphism is actually unique).
  \item Conversely, if $\pos[p^0]$ and $\pos[p^1]$ are positions such that $\pi(\pos[p^0]) = \pi(\pos[p^1])$, then they are isomorphic in $\blweb[\myk]$ via a unique isomorphism. This isomorphism is denoted by $\iweb{\pos[p^0]}{\pos[p^1]}$.
  \item If a bl-web $\web: \pos[p^0] \to \pos[p^1]$ has a unique split (resp.{} merge) vertex, then  $\pi(\pos[p^1])$ (resp.{} $\pi(\pos[p^0])$) is an elementary decomposition of $\pi(\pos[p^0])$ (resp.{} $\pi(\pos[p^1])$). 
  \item If  $\web, \web': \pos[p^0] \to \pos[p^1]$ are two bl-webs with a unique vertex, then $\web = \web'$ in $\blweb[\myk]$, we denote this morphism by $\vertex{\pos[p^0]}{\pos[p^1]}$.
  \item Every bl-web is isotopic to a web for which trivalent vertices have different coordinates. Therefore all morphisms of $\blweb[\myk]$ are compositions of $\vertex{\bullet}{\bullet}$'s and $\iweb{\bullet}{\bullet}$'s. A bl-web which appears as such a composition is well-presented.\item Let $\web: \pos[p^0] \to \pos[p^1]$ be a bl-web. Consider $\web^{\dagger}$, the mirror image of $\web$ along $I \times \{\frac12\}$ with all orientations reversed. It is a bl-web from $\pos[p^1]$ to $\pos[p^0]$. This process endows $\blweb[\myk]$ with a dagger structure, that is, a contravariant involutive endo-functor $\dagger$ which is the identity on objects. \item The category $\blweb:= \bigsqcup_{\myk}\blweb[\myk]$ is equipped with a monoidal structure $\sqcup$ which comes from horizontal (disjoint) stacking and rescaling.
  \item   The dagger and monoidal structures of $\blweb$ are compatible in the sense that:
    \((\web_1\sqcup \web_2)^\dagger = \web_1^\dagger \sqcup \web_2^\dagger.\)
  \end{enumerate}
\end{rmk}

\subsection{Foams}
\label{sec:sub-foams}

\subsubsection{Generalities}
\label{sec:subsub-foams}

\begin{dfn}\label{dfn:foam}
  A \emph{foam} $\foam$ is a collection
  of \emph{facets} $\foam^{(2)}=(\Sigma_i)_{i\in I}$, that is a
  finite collection of oriented connected surfaces with boundary,
  together with the following data:
  \begin{itemize}\item A \emph{thickness} function
    $\thickness \co (\Sigma_i)_{i\in I} \to  \NN_{>0}$. \item A set of \emph{decorations}, that is, for each facet $f \in \foam^{(2)}$, a
    symmetric polynomial $P_f$ in $\thickness(f)$ variables with rational
    coefficients.  A facet $f$ is called \emph{trivially decorated} if
    $P_f=1$.
  \item A ``gluing recipe'' of the facets along their boundaries such
    that upon gluing there are the three possible local models: the neighborhood of a point is homeomorphic to  {\it
    (i)}  a surface,
    {\it (ii)} a tripod times an interval or {\it (iii)} the cone of
    the 1-skeleton of the tetrahedron, as depicted below.
\[
  \begin{tikzpicture}
        \input{\imagesfolder/cef_3localmodels}

      \end{tikzpicture}
      \label{fig:FBSP} \]
The letter appearing on a facet is the thickness of this facet.  Thus
    a foam consists of \emph{facets}, \emph{bindings} or \emph{bindings} (which are connected compact oriented $1$-manifolds) and \emph{singular vertices}. Each
    binding carries:
    \begin{itemize}
    \item An orientation which agrees with the orientations of the
      facets with thickness $a$ and $b$ and disagrees with the
      orientation of the facet with thickness $a+b$. Such a binding has
      \emph{type $(a, b, a+b)$}.
\item A cyclic ordering of the three facets around it. When a foam
      is embedded in $\RR^3$,  we require this cyclic ordering to agree
      with the left-hand rule\footnote{This agrees with Khovanov's
      convention used in~\cite{MR2100691}.} with respect to its
      orientation (the dotted circle in the middle indicates that the
      orientation of the binding points to the reader).\[
        \begin{tikzpicture}[xscale=1]
          \input{\imagesfolder/sw_lhrule}
        \end{tikzpicture}
      \]
    \end{itemize}
    The cyclic orderings of the different bindings adjacent to a
    singular vertex should be compatible. This means that a
    neighborhood of the singular vertex is embeddable in $\RR^3$ in a
    way that respects the left-hand rule for the four binding adjacent
    to this singular vertex.
  \end{itemize}
  In particular, when forgetting about its thickness  decorations and
  orientations, a foam has a structure of a compact, finite 2-dimensional
  CW-complex.

  Denote by $\mathcal{S}$ the collection of circles which are
  boundaries of the facets of $F$. The gluing recipe consists of: 
\begin{itemize}
  \item A subset $\mathcal{S}'$ of $\mathcal{S}$ and a subdivision of
    each circle of $\mathcal{S'}$ into a finite number of closed
    intervals. This gives us a collection $\mathcal{I}$ of closed
    intervals.
  \item Partitions of
    $\mathcal{I} \cup (\mathcal{S} \setminus \mathcal{S'})$ into
    subsets of three elements. For every subset $(Y_1, Y_2, Y_3)$ of
    this partition, three diffeomorphisms $\phi_1 : Y_2 \to Y_3$,
    $\phi_2 : Y_3 \to Y_1$, $\phi_3 : Y_1 \to Y_2$ are fixed such that
    $\phi_3 \circ \phi_2 \circ \phi_1 = \mathrm{id}_{Y_2}$.
  \end{itemize}
\end{dfn}
Bindings of a foam are circles (elements of $\mathcal{S}\setminus \mathcal{S}'$) and intervals (included in the circles of $\mathcal{S}'$), and the above gluing recipe explains how to glue triples of facets along common binding intervals and circles.

The concept of foam extends naturally to the
concept of \emph{foam with boundary}. The boundary of a foam has the
structure of a web. We require that the facets and bindings are
locally orthogonal to the boundary in order to be able to glue them
together canonically. When we want to emphasize that a foam has no boundary, we say that it is \emph{closed}.

\begin{dfn}
Let $\foam$ be a closed foam and fix an integer $\myN$ greater than or equal to the maximum thickness of facets of $\foam$. One can form a surface $\Sigma_\myN(\foam)$ by replacing each facet of thickness $a$ by $a(\myN-a)$ copies of that facet. These parallel copies are glued together along seams and singular vertices to obtain $\Sigma_\myN(\foam)$. The \emph{$\myN$-degree} of $\foam$ is the integer $\deg[\myN]{\foam}$ defined by:
\begin{equation}
  \label{eq:1}
\deg[\myN]{\foam} :=-\chi(\Sigma_\myN(\foam)) + 2\sum_{f\in \foam^{(2)}} \deg{P_f}.   
\end{equation}
\end{dfn}
In the language of~\cite{RW1}, $a(\myN-a)$ is the number of bicolored surfaces for a given $\gll_\myN$-coloring which contain that facet, with $a$ choices for the pigment in the facet and $\myN-a$ choices for the color not in the facet.

When the foam $\foam$ is not closed the same formula can be used, but the degree is additionally shifted by a quantity depending only on the boundary of $\foam$ (see Definitions~\ref{dfn:morphisms-type} and \ref{dfn:element-type}).

\subsubsection{The $2$-category of bl-foams}

Consider the cube $C = \ui^3$ with the following
parameterization of its boundary:
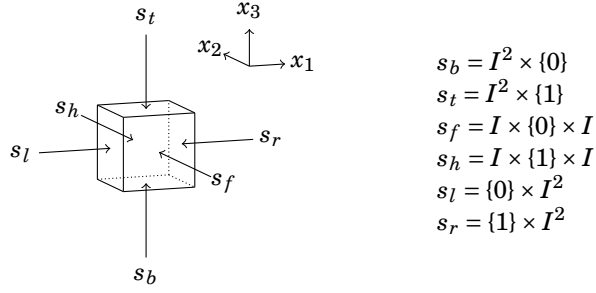
\begin{figure}[h]
  \centering
  \[
    \NB{\tikz[]{\input{\imagesfolder/sym_cube}}}
    \qquad \qquad
    \begin{array}{l}
      s_b = I^2\times \{0\} \\
      s_t = I^2\times \{1\} \\
      s_f = I\times \{0\} \times I\\
      s_h = I\times \{1\} \times I\\
      s_l = \{0\} \times I^2\\
      s_r = \{1\} \times I^2
    \end{array}
  \] 
  \caption{Parametrization and notations for the standard cube.}
\label{fig:cube}
\end{figure}
The symbols $s_\bullet$ denote the 6 squares of the boundary of $C$
and the letters $f, h, l, r, b, t$ stand for \textbf{f}ront,
\textbf{h}idden, \textbf{l}eft, \textbf{r}ight, \textbf{b}ottom and
\textbf{t}op.  The plane $P$ is parallel to the squares $s_f$ and $s_h$.
\begin{dfn}
  \label{dfn:foamincube}
  Let $F$ be a foam with boundary embedded in $C$. Suppose that the
  boundary of $F$ is contained in $s_l\cup s_r\cup s_b\cup s_t$, and
  that the MOY-graphs $F\cap s_l$, $F\cap s_r$, $F\cap s_b$ and
  $F\cap s_t$ are all braid-like. We say that $F$ is a \emph{bl-foam}
  if for every point $x$ of $F$, the normal line of the foam $F$ at
  $x$ is \emph{not} parallel to $P$.

  Let $\pos[p^0]$ and $\pos[p^1]$ two positions and $\web_0, \web_1 \colon \pos[p^0] \to \pos[p^1]$ two bl-webs. If a bl-foam $\foam$ is such that $\foam \cap s_b = -\web_0$,  $\foam \cap s_t = \web_1$, $\foam \cap s_l = -\pos[p^0]\times I $ and,  $\foam \cap s_r = \pos[p^1] \times I$, then $F\foam$ is \emph{of morphism type}   and we write: $\foam \colon \web_0\to \web_1$.
\end{dfn}

\begin{dfn}\label{dfn:morphisms-type}
  Let $\web_0, \web_1 \colon \web_0, \web_1 \colon \pos[p^0] \to \pos[p^1]$ are two bl-webs and $\foam\colon \web_0 \to \web_1$ is a foam of morphism type. Define the degree of $\foam$ to be:
  \begin{equation}
    \deg{\foam} := -\chi(\Sigma_\myk(\foam)) + 2\sum_f \deg{P_f}  + \frac12\sum_{i=1}^{\ell^0} k^0_i(\myk-k_i^0) + \frac12\sum_{j=1}^{\ell^1} k^1_j(\myk - k_j^1),
  \end{equation}
 where $\comp[k^0]= (k^0_1, \dots k^0_{\ell^0})$ (resp.{} $\comp[k^1]= (k^1_1, \dots k^1_{\ell^1})$) is  the composition of $\myk$ underlying $\pos[p^0]$ and $\pos[p^1]$.
\end{dfn}

With this definition we can form the 2-category $\TblFoam[\myk]$:
\begin{itemize}
\item Objects are positions of $\myk$,
\item $1$-morphisms are bl-webs,
\item $2$-morphisms are bl-foams of morphism type, considered up to isotopy rel boundary.
\end{itemize}

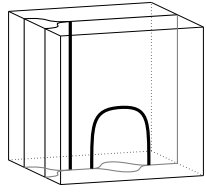
\begin{figure}
\centering
\NB{\tikz[]{\input{\imagesfolder/hhf_sample3}}}
\caption{Example of a foam of morphism type.}
\end{figure}

Compositions of $1$-morphisms and $2$-morphisms is given my gluing and resizing. For the composition of the $2$-morphisms, decorations are multiplicative: the decoration of a facet $f$ of $F \circ G$  in the composition is obtained as the product of the decorations of the facets of $F$ and $G$ which constitute $f$. The degree of bl-foams or morphism type behaves additively with respect to vertical and horizontal compositions.

\begin{rmk}
  The 2-category $\TblFoam := \displaystyle{\bigsqcup_{\myk \geq 0} \TblFoam[\myk]}$ has a monoidal structure given by taking the disjoint union and rescaling along the $x_2$-direction (see Figure~\ref{fig:cube} for the parametrization).

\end{rmk}

As suggested by its name, $\TblFoam[\myk]$ is a category of a topological flavor, similar to that of cobordisms. To introduce a more algebraic version of that category we need  the notion of $\infty$-equivalence.

\subsubsection{$\infty$-equivalence}
\label{sec:infty-equivalence}

In \cite{RW1}, for any integer $\myN$, the $\gll_\myN$-evaluation of closed foams was defined. We denote it by $\tau_\myN$. It associates with every closed foam $\foam$ an element $\tau_\myN(\foam) \in  \ZZ[X_1, \dots, X_\myN]^{\sg[\myN]}$. Two $\QQ$-linear combinations of foams $\sum_i \lambda_iF_i$ and $\sum_j \mu_jG_j$ with common boundary $\web$ are \emph{$\infty$-equivalent} if for any foam $H$ with boundary $-\web$ and any integer $\myN$,  the following identity holds:
\[
  \sum_i \lambda_i \tau_\myN\left(H\circ F_i\right) =
  \sum_j \mu_j \tau_\myN\left(H\circ G_j\right). 
\]
As its name suggests, $\infty$-equivalence is an equivalence relation, it is homogeneous, so that spaces of foams modulo $\infty$-equivalence are naturally graded.

We can now form the 2-category $\blFoam[\myk]$:
\begin{itemize}
\item Objects are compositions of $\myk$,
\item $1$-morphisms are direct sums of $\frac12\ZZ$-shifted bl-webs,
\item $2$-morphisms $2\Hom$ are matrices of formal $\QQ$-linear combinations of foams with the degree prescribed by the differences of grading shifts, with the linear combinations considered up to $\infty$-equivalence.
\end{itemize}

As usual when dealing with graded categories, if $\web_0, \web_1\colon \pos[p^0] \to \pos[p^1]$ are two bl-webs, define the graded $\QQ$-vector space by summing over 2-morphisms for all grading shifts: 
\[
  2\HOM_{\blFoam}(\web_0, \web_1) = \bigoplus_{i\in \frac12\ZZ}  2\Hom_{\blFoam}(\web_0,q^{i}\web_1).
\]

Similarly to $\TblFoam := \displaystyle{\bigsqcup_{\myk \geq 0} \TblFoam[\myk]}$, one can form $\blFoam:= \displaystyle{\bigsqcup_{\myk \geq 0} \blFoam[\myk]}$. It inherits a monoidal structure given by disjoint union and rescaling along the $x_2$-direction. 

We won't give much details on the foam evaluation formula here. However for future use we record a few local relations on foams which hold up to $\infty$-equivalence  and conclude this section with a result by Queffelec--Rose~\cite{QRAnnular} which will be crucial for proving  Theorem~\ref{thm:main}.

\begin{lem}[Dot migration, {\cite[Equation (11)]{RW1}}]
  \label{lem:foam-dot-migration}
If $P \in \scalars[x_1, \dots, x_{a+b}]^{\sg[a+b]}$ is rewritten as a $\sum_{i} Q_i^{(1)}\otimes Q_i^{(2)} \in \scalars[x_1, \dots, x_{a}]^{\sg[a]}\otimes \scalars[x_{a+1}, \dots, x_{a+b}]^{\sg[b]} $, then up to $\infty$-equivalence, the following local identity holds:

\begin{equation}
  \NB{\tikz[]{\input{\imagesfolder/hhf_dot-migration-foam}}}.
\end{equation}
\end{lem}

\begin{lem}[Bubble removal, {\cite{RW1}}, {\cite[p.17]{qi2021categorificationcoloredjonespolynomial}}]
  The following identity holds up to $\infty$-equivalence:
  \label{lem:bubble}
  \begin{equation}
     \NB{\tikz[]{\input{\imagesfolder/hhf_bubble-2}}}
   \end{equation}
   where the bubble consists of two facets: a disk of thickness $a$ (depicted on the top) and one disk of thickness $b$ (depicted on the bottom). The quantity $R$ is a symmetric polynomial in $a+b$ variables. 
\end{lem}

\begin{lem}[Diabolo removal, {\cite{RW1}, {\cite[(4-3)]{RW}}}]
  \label{lem:diabolo}
  The following identity holds up to $\infty$-equivalence:
  \begin{equation}
     \NB{\tikz[]{\input{\imagesfolder/hhf_2-sided-dish-2}}}
   \end{equation}
   where the foam on the LHS consists of three facets: two annuli of thicknesses $a$ (on the top) and $b$ (on the bottom), and a disk of thickness $a+b$ in the middle. These three facets have a common circular boundary. The foam on the RHS consists of two disjoint disks of thickness $a$ (on the top) and $b$ (on the bottom). The sum runs over all Young diagram $\lambda$ with at most $a$ rows and $b$ columns  and $\widehat{\lambda}$ is the transpose of the complementary of $\lambda$ in the rectangle $a\times b$, so that it has at most $b$ rows and $a$ columns. Finally, $s_\lambda$ is the Schur polynomial in $a$ variables associated with $\lambda$ and $s_{\widehat{\lambda}}$ is the Schur polynomial in $b$ variables associated with $\widehat{\lambda}$, and ${|\widehat{\lambda}|}$ is the number of boxes in ${\widehat{\lambda}}$. 
\end{lem}

\begin{prop}[{\cite[Theorem 3.30]{RW1}}]
  \label{prop:iso-web}
  We have the following isomorphisms between $1$-morphims in $\blFoam$:
  \begin{gather}
    \NB{\tikz[]{\input{\imagesfolder/hhf_assoc}}}, \qquad \qquad \qquad      \NB{\tikz[]{\input{\imagesfolder/hhf_co-assoc}}}, \\
    \NB{\tikz[]{\input{\imagesfolder/hhf_digon}}}, \\
    \NB{\tikz[]{\input{\imagesfolder/hhf_square-1}}} \qquad \text{if $b-a + d-c \geq 0$,} \\
    \NB{\tikz[]{\input{\imagesfolder/hhf_square-2}}} \qquad \text{if $a-b + d-c \geq 0$.}
\end{gather}
\end{prop}

In this paper we are interested in $\blFoam$, in the proof of the main theorem, where we need to deal with annular webs and foams. Categorically, this means we work in the horizontal trace of the $2$-category $\blFoam$. The objects of $\hTr[\blFoam]$ are 1-endomorphisms of $\blFoam$. Morphisms from $\web_0 \colon \pos[p^0] \to \pos[p^0]$ to  $\web_1 \colon \pos[p^1] \to \pos[p^1]$ are equivalence classes of pairs $(\web, \foam)$, where $\web \colon \pos[p^0] \to \pos[p^1]$ and $F \colon \web\circ \web_0  \to \web_1 \circ \web$ is a 2-morphism. The equivalence relation is generated by imposing that $(\web, (\id_{\web_1} \cdot \foam') \circ \foam)$ is equivalent to $(\web', \foam'\circ(\foam \cdot \id_{\web_0}))$ for $\web, \web' \colon \pos[p^0] \to \pos[p^1]$, $\foam \colon \web\circ \web_0  \to \web_1 \circ \web'$  and $\foam' \colon \web' \to \web$. Vertical composition of $2$-morphisms is denoted by $\circ$, and $\cdot$ denotes horizontal composition of $2$-moprhisms.  See \cite[Section~4.3]{queffelec2018annular} for more details.

More concretely, morphisms in $\hTr[\blFoam]$ are linear combinations of $\infty$-equivalence classes of annular bl-foams. By bl-foam here, we mean that generic cross-sections are bl-webs.

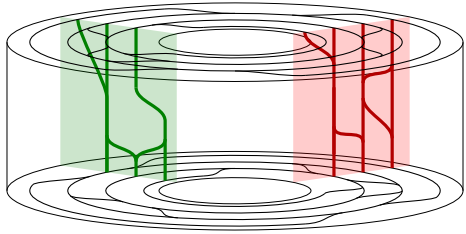
\begin{figure}[ht]
  \centering
  \NB{\tikz[]{\input{\imagesfolder/hhf_annular-foam-example}}}
  \caption{Schematic of the constraints of an annular bl-foam}
  \label{fig:exa-annular-foam}
\end{figure}

In $\hTr[\blFoam]$, the objects $\id_{\pos}$ will play a special role. If two positions $\pos[p^0]$ and $\pos[p^1]$ have the same underlying decomposition then the objects $\id_{\pos[p^0]}$ and $\id_{\pos[p^1]}$ are isomorphic in $\hTr[\blFoam]$. For each composition $\comp$ of $\myk$ we chose one position $\pos$ realizing it and denote $\id_{\pos}$ (seen in $\hTr[\blFoam]$) by $\SS_{\comp}$.

\begin{prop}[Queffelec--Rose algorithm~{\cite[Lemma~5.2]{QRAnnular}}]
  \label{prop:QR}
  Let $\web \colon \pos \to \pos$ be an object of $\hTr[\blFoam]$, then there exist families of polynomials $(P_{\comp}(\web))_{\comp}$ and $(Q_{\comp}(\web))_{\comp}$ in  $\ZZ_{\geq 0}[q, q^{-1}]$ indexed by compositions of $\myk$, such that:
  \[
    \web \oplus \left(\bigoplus_{\comp} Q_{\comp}(\web) \SS_{\comp}\right) \simeq \bigoplus_{\comp} P_{\comp}(\web) \SS_{\comp}.
  \]
  Moreover, this decomposition is algorithmic in the sense that it can be achieved by using iteratively the isomorphisms presented in Proposition~\ref{prop:iso-web}.
\end{prop}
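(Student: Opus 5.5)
The proof I would give follows the strategy of Queffelec--Rose: recast bl-webs as ladder words and rewrite them in the horizontal trace. I would argue by induction on the number of trivalent vertices of $\web$, using cyclicity in $\hTr[\blFoam]$ together with the isomorphisms of Proposition~\ref{prop:iso-web}.

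\textbf{Step 1 (ladder presentation).} I will first put $\web$ in a normal form. By the remark on $\blweb[\myk]$, $\web$ is a composition of elementary vertices $\vertex{\bullet}{\bullet}$ and identities $\iweb{\bullet}{\bullet}$. Using the associativity isomorphisms of Proposition~\ref{prop:iso-web}, each split vertex immediately followed by the corresponding merge can be reorganised into ``rungs'', that is, transfers of thickness between two neighbouring strands. This is the Cautis--Kamnitzer--Morrison ladder description. In it, $\web$ becomes a word in divided powers $E_i^{(r)}$ and $F_i^{(r)}$ acting on a $\gll_m$-weight, namely the composition of $\myk$ padded with zeros. Because $\web$ is an endomorphism, the total weight change of the word is zero. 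A vertex-free web is already some $\SS_{\comp}$.

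\textbf{Step 2 (commutation and cyclicity).} The square isomorphisms of Proposition~\ref{prop:iso-web} are the categorified relations
\[
E_i^{(a)}F_i^{(b)}1_\lambda \oplus \cdots \simeq F_i^{(b)}E_i^{(a)}1_\lambda \oplus \cdots ,
\]
with quantum-binomial multiplicities of words having fewer rungs. Which side the extra summands sit on depends on the sign condition ($b-a+d-c\geq 0$ or $a-b+d-c\geq0$). The relations involving $E_i$ and $F_j$ for $i\neq j$ are plain isotopies or associativity moves. In the horizontal trace we also have $X\circ Y\simeq Y\circ X$ for composable $1$-morphisms, since one may slide part of the annular web around the core.

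The rewriting proceeds as follows. A nonempty zero-weight word must contain both some $E$ and some $F$. Using cyclicity, I rotate the word so that an $F$ occurs immediately before an $E$ of the same colour $i$, after first commuting the $E$ past letters of other colours. I then apply the square isomorphism to swap them. Whenever the swap goes the ``wrong'' way for the weight, the correction terms land on the left-hand side instead. This is exactly why the statement has the $Q_{\comp}(\web)\SS_{\comp}$ summands on the left rather than giving a direct decomposition.

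\textbf{Step 3 (termination; main obstacle).} The hard part is proving that this rewriting terminates. Swapping does not lower the rung count, and the correction terms appear on both sides. I would use the lexicographic complexity (total size of the divided powers, then number of inversions $F$-before-$E$ up to cyclic rotation). Correction terms strictly decrease the first entry. Swaps decrease the second entry, once the word is read as a cyclic word and the rotation is chosen so that all $E$'s are pushed to one end. Once no $F$ precedes an $E$ cyclically, the zero-weight condition forces the word to be empty, which is an $\SS_{\comp}$. Gathering terms by induction gives the polynomials $P_{\comp}(\web)$ and $Q_{\comp}(\web)$ with nonnegative coefficients, since all multiplicities are quantum binomials. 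The procedure uses only Proposition~\ref{prop:iso-web} and the cyclicity isomorphisms, which is the algorithmic claim.
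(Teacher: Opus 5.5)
Your overall route is the right one: ladder presentation, trace cyclicity, and the square isomorphisms of Proposition~\ref{prop:iso-web}, with ``wrong-way'' isomorphisms producing the $Q$-summands. The paper does not reprove the statement. It cites Queffelec--Rose and only notes that the web isomorphisms the algorithm uses also hold up to $\infty$-equivalence. There are, however, two genuine gaps in your proposal, and the main one is termination in Step~3.

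\textbf{Termination.} In the horizontal trace, the closure of $F_i^{(c)}E_i^{(c)}1_\lambda$ equals the closure of $E_i^{(c)}F_i^{(c)}1_{\lambda'}$ for a suitable $\lambda'$; it is the \emph{same} cyclic word. So a swap does not lower an inversion count taken up to rotation. If instead you count inversions in a fixed linear reading, the words $E\cdots EF\cdots F$ have no inversions but are nonempty. Your claim that ``no $F$ before an $E$'' forces the empty word is therefore false: cyclically, every word containing both letters has an $F$ immediately followed by an $E$.

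Here is a concrete loop. Take two uprights of thicknesses $(x,y)$ with $y\geq x$. The first square isomorphism rewrites the closure of ``move $c$ to the left, then $c$ back'' as the closure of ``move $c$ to the right, then $c$ back'', plus terms with fewer rungs. Rotating the top rung around the core, the latter is the original cyclic word, now based at $(x-c,y+c)$. Neither entry of your complexity has dropped. What actually stops the process is that the base weight moves: after at most $x/c$ steps the leading term would need a negative thickness, so it vanishes.

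\textbf{The missing ingredient.} You need a rotation-invariant, weight-based measure, together with the finiteness of the set of weights (compositions of $\myk$ padded with zeros). One choice is as follows.
\begin{itemize}
\item Let $E_i$ move thickness from the $(i+1)$-st upright to the $i$-th. Let $\Phi$ be the sum of $\sum_k k\lambda_k$ over all weights $\lambda$ visited by the cyclic word.
\item Replace a cyclically adjacent pair ``first $E_i^{(a)}$, then $F_j^{(b)}$'' by ``first $F_j^{(b)}$, then $E_i^{(a)}$''. This changes one visited weight from $\mu+a\alpha_i$ to $\mu-b\alpha_j$, so $\Phi$ rises by $a+b$.
\item For $i\neq j$ this swap is an isomorphism, by isotopy or (co)associativity. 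For $i=j$ it is a square isomorphism, in whichever direction the sign condition allows, up to words of smaller total rung size. In both directions the swapped word appears with multiplicity one.
\item Every nonempty zero-weight cyclic word contains such a pair.
\item $\Phi$ takes only finitely many values on nonzero words with a fixed multiset of letters.
\end{itemize}
Induction on total rung size, and then on $\Phi$, therefore terminates. For this you also need one explicit observation: the class of objects admitting a decomposition as in the statement has the two-out-of-three property for isomorphisms $X\oplus Y\simeq Z$. That is the precise form of your remark that corrections ``land on the left''.

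\textbf{Step~2.} You commute $E_i$ past ``letters of other colours'' to make it adjacent to an $F_i$. But $E_i$ does not commute with $E_{i\pm1}$. For example, $E_1E_2 1_{(0,1,1)}$ is a merge-then-split web, while $E_2E_1 1_{(0,1,1)}$ is the identity web. So same-colour adjacency cannot be forced that way. The $\Phi$-argument above never needs same-colour adjacency, so it repairs this step as well.
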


This algorithm works for both setups:  (1) foams constructed via categorified quantum groups, as in~\cite{QRAnnular}, and (2) foams constructed via the Robert--Wagner evaluation~\cite{RW1} and its $\infty$-equivalence modification for braid-like and annular foams in~\cite{RW2} and the present paper.

\section{Soergel bimodules}
\label{sec:Soergel}

\subsection{Fixing convention}
\label{sec:soergel-convention}
The aim of this section is to present the 2-category of singular
Soergel bimodules in type $\mathsf{A}$ (mostly to fix conventions and notations).
In what follows, $\scalars$ is a commutative ring, and the polynomial
algebra $\scalars[x_1, \dots, x_\myk]$ is denoted by $\algebra$. It is
graded by imposing that the variables $x_i$ are homogeneous of degree
2.  It is naturally acted upon by $\sg[\myk]$, the symmetric group on
$\myk$ elements.

For a composition $\comp=(k_1, \dots, k_\ell)$, $\sg[\comp]$ denotes
the parabolic subgroup of $\sg[\myk]$ (isomorphic to
$\prod_{i=1}^\ell \sg[k_i]$) naturally associated with that
composition. The algebra $\algebra^{\sg[\comp]}$ is denoted by
$\algebra[\comp]$, so that in particular, $\algebra[1^\myk]= \algebra$
and $\algebra[{\comp[\myk]}]$ is the algebra of symmetric polynomials
in $x_1, \dots, x_{\myk}$.

If $\comp'$ is a decomposition of $\comp$, then $\sg[\comp']$ is a subgroup of $\sg[\comp]$, and, therefore, $\algebra[\comp]$ is a sub-algebra of $\algebra[\comp']$. Consequently, the (shifted) algebra $q^{\frac{\shift(\comp)-\shift(\comp')}{2}} \algebra[\comp']$ can be viewed as a $(\algebra[\comp], \algebra[\comp'])$-bimodule (resp.{} as a $(\algebra[\comp'], \algebra[\comp])$-bimodule). This bimodule is denoted $\sbim{\comp}{\comp'}$ (resp.{} $\sbim{\comp'}{\comp}$). When writing $\sbim{\comp[k_1]}{\comp[k_2]}$, it is implicitly assumed that $\comp[k_1]$ is a decomposition of $\comp[k_2]$, or, conversely, that $\comp[k_1]$ is a decomposition of $\comp[k_2]$. Such bimodules and their tensor products are called \emph{singular Bott--Samelson} bimodules.

Let $\scalars\text{-}\mathsf{Alg}_{\frac{1}{2}\mathbb{Z}}$ be the 2-category of $\frac{1}{2}\mathbb{Z}$-graded $\scalars$-algebras, where:
\begin{itemize}
\item Objects are $\frac{1}{2}\mathbb{Z}$-graded $\scalars$-algebras;
\item 1-morphisms are $\frac{1}{2}\mathbb{Z}$-graded bimodules;
\item 2-morphisms are bimodule maps.
\end{itemize}
Composition of $1$-morphisms is given by taking tensor product over the ``middle'' algebras. 

The 2-category $\tSSoergel{\scalars}{\myk}$ is a sub-2category of $\scalars\text{-}\mathsf{Alg}_{\frac{1}{2}\mathbb{Z}}$:
\begin{itemize}
\item Objects are algebras $\algebra[\comp]$ where $\comp$ runs over compositions of $\myk$;
\item 1-morphisms are direct sums of shifted versions of compositions\footnote{The polysemous  nature of \emph{composition} is slightly annoying; we hope this will not lead to any confusion. Here, of course, we mean composition as 1-morphisms.} of singular Bott--Samelson bimodules.
\item 2-morphisms are bimodule maps. 
\end{itemize}
When no confusion is possible, we write $\tSSoe$ instead of $\tSSoergel{\scalars}{\myk}$. The literature typically focuses on the $2$-category $\SSoe$, the Karoubi completion of $\tSSoe$, in other words, $\SSoe$ is defined similarly expect that, for $1$-morphisms, one may also take direct summands.

\subsection{Graphical representation}
\label{sec:graph-repr}

\newcommand{\cbim}{\ensuremath{\mathbf{bim}}}

Before investigating the 2-category $\tSSoe$, it is worth stating a few basic facts about  bimodules. For this it is convenient temporarily to discard part of the 2-category structure and consider the $1$-category $\cbim$: \begin{itemize}
\item Objects are graded commutative $\scalars$-algebras,
\item Morphisms are graded bimodules up to isomorphisms,
\item Composition is given by tensor product over algebras.
\end{itemize}
The tensor product over $\scalars$ induces a monoidal structure on $\cbim$. 

If $A$ and $B$ are commutative algebras, then any $(A,B)$-bimodule can be considered as a $(B,A)$-bimodule. This simple fact induces a dagger structure on $\cbim$. 

(Singular) Soergel bimodules have been used for defining the (colored) triply graded homology. In this context, it is  useful to represent them by diagrams. Formally this amounts to saying that one uses a $1$-functor $\fone$ from the category $\blweb$ to $\cbim$ defined as follows. On objects, 
$\fone(\pos[p]) =  \algebra[{\pi(\pos[p])}]$.
On morphisms, one defines $\fone(\iweb{\pos[p]}{\pos[p]})= \id_{\algebra[{\pi(\pos[p])}]}$ (that is, $\algebra[{\pi(\pos[p])}]$ is seen as a $(\algebra[{\pi(\pos[p])}],\algebra[{\pi(\pos[p])}])$-bimodule) and $\fone(\vertex{\pos[p]}{{\pos[p']}})= \sbim{\pos[p]}{\pos[p']}$. Since $\pos[p]$ is trivially a decomposition over itself, one also has $\fone(\iweb{\pos}{\pos}) = \sbim{\pos}{\pos}$.

This indeed defines a functor $\fone$: the only thing to check is that two isotopic  well-presented bl-web are mapped onto isomorphic bimodules. The isotopies that change the order (relative height) of the trivalent vertices induce canonical isomorphisms of bimodules. 
 The functor $\fone$ is monoidal and respects the dagger structure. 

It is worth noting that one associates actual bimodules (as opposed to isomorphism classes) to well-presented bl-webs. The functor $\fone$ allows us to use a diagrammatic language to speak about singular Soergel bimodules.

If $\web$ is a bl-web, the elements of $\fone(\web)$ can be represented as linear combination of decorations on the edges of the web $\web$ subject to dot-migration relations at vertices. A \emph{decoration} of an edge of thickness $a$ is a symmetric polynomial in $a$ variables. The dot-migration relations can be expressed as follows:
if $P \in \scalars[x_1, \dots, x_{a+b}]^{\sg[a+b]}$ is rewritten as a $\sum_{i} Q_i^{(1)}\otimes Q_i^{(2)} \in \scalars[x_1, \dots, x_{a}]^{\sg[a]}\otimes \scalars[x_{a+1}, \dots, x_{a+b}]^{\sg[b]} $, then we have:

\[
  \NB{\tikz[]{\input{\imagesfolder/hhf_dot-migration-web}}}  \qquad\text{and}\qquad
    \NB{\tikz[]{\input{\imagesfolder/hhf_dot-migration-web-2}}}.
\]

For legibility, the functor $\fone$ will be omitted as soon as the context allows and diagrams of bl-webs will be considered as bimodules.  This is, for example, the case in the next proposition.

Let $(D_e)_{e \in E(\web)}$ be a collection of homogeneous decoration. It represents an element $D$ of  $\fone[\web]$. Taking in account grading shifts in $\fone[\web]$, the degree of $D$ is equal to $2 \sum \deg{D_e} - \shift(\web)$.

\begin{prop}[Folklore, see for instance discussion in {\cite[Appendix A]{hogancamp2021skeinrelationsingularsoergel}} or {\cite[Section 5.2]{queffelec2018annular}}] \label{prop:soergel-skein}
  Bimodules associated (by $\fone$) to well-presented bl-webs satisfy the following identities:
  \begin{gather}
    \NB{\tikz[]{\input{\imagesfolder/hhf_assoc}}}, \qquad \qquad \qquad      \NB{\tikz[]{\input{\imagesfolder/hhf_co-assoc}}}, \\
    \NB{\tikz[]{\input{\imagesfolder/hhf_digon}}}, \\
    \NB{\tikz[]{\input{\imagesfolder/hhf_square-1}}} \qquad \text{if $b-a + d-c \geq 0$,} \\
    \NB{\tikz[]{\input{\imagesfolder/hhf_square-2}}} \qquad \text{if $a-b + d-c \geq 0$.}
\end{gather}
\end{prop}

\begin{rmk}
  \begin{enumerate}
  \item The monoidality of the functor $\fone$ implies that these relations holds ``locally'': they may be surrounded by vertical strands on both sides.
\item The existence of the functor $\rep$ defined in Section~\ref{sec:repr-2-funct} and relations on webs also implies Proposition~\ref{prop:soergel-skein}.
  \end{enumerate}
\end{rmk}

\subsection{About 2-morphisms}
\label{sec:about-2-morphisms}

Let $\comp$ be a composition. Recall that $\sbim{\comp}{\comp}$ is the algebra $\algebra[\comp]$ considered as a $(\algebra[\comp], \algebra[\comp])$-bimodule (with no shift). It is the identity morphism of the object $\algebra[\comp]$. 
The space of endomorphisms of $\sbim{\comp}{\comp}$ is isomorphic to the algebra $\algebra[\comp]$. It is a free graded $\scalars$-module, and, if $\comp = (k_1, \dots, k_\ell)$, its graded rank over $\scalars$ is equal to \begin{align}\prod_{i=1}^\ell \frac{1}{1-q^{2k_i}}.\end{align}
It is also a free graded $\algebra[{\comp[\myk]}]$-module and its graded rank over $\algebra[{\comp[\myk]}]$ is equal to \begin{align}\prod_{i=1}^\ell  q^{\shift(\comp)} \
\begin{bmatrix}
  \myk \\ k_1 \,\, \cdots \,\,\, k_\ell
\end{bmatrix}.
\end{align}

\newcommand{\op}{{\ensuremath{\mathrm{op}}}}

Recall that, if $\comp'$ is a decomposition of $\comp$ then $\algebra[\comp]$ is a sub-algebra of $\algebra[{\comp'}]$, and that  $\sbim{\comp}{\comp'}$ (resp.{} $\sbim{\comp}{\comp'}$) is, up to a grading shift, $\algebra[{\comp'}]$ seen as a $(\algebra[\comp], \algebra[{\comp'}])$-bimodule (resp.{} as a
$(\algebra[{\comp'}], \algebra[{\comp}]$-bimodule). Hence the functors
\begin{align}
  \sbim{\comp}{\comp'}\otimes_{\algebra[{\comp'}]} \cdot\thinspace: \algebra[{\comp'}]\text{-}\mathsf{Mod} \to \algebra[{\comp}]\textrm{-}\mathsf{Mod} \qquad \text{and} \qquad
  \sbim{\comp'}{\comp}\otimes_{\algebra[{\comp}]} \cdot \thinspace: \algebra[{\comp}]\text{-}\mathsf{Mod} \to \algebra[{\comp'}]\textrm{-}\mathsf{Mod}.
\end{align}
are the induction and restriction functors. They form a bi-adjunction, being both left and right adjoint (since $\algebra[{\comp'}]$ is graded symmetric Frobenius over $\algebra[{\comp}]$). Taking in account grading shifts, one has, for any graded $(\algebra[\comp], \algebra[{\comp[\ell]}])$-bimodule $M$ and $(\algebra[{\comp'}], \algebra[{\comp[\ell]}])$-bimodule $N$:   
\begin{align}
  &\HOM_{\algebra[{\comp'}] \otimes \algebra[{\comp[\ell]}]^\op}(\sbim{\comp'}{\comp}\otimes_{\algebra[{\comp}]}M,N) \simeq q^{\shift(\comp') - \shift(\comp)}
  \HOM_{\algebra[\comp] \otimes \algebra[{\comp[\ell]}]^\op}(M,\sbim{\comp'}{\comp}\otimes_{\algebra[{\comp}]}N), \qquad \text{and}\\
  &\HOM_{\algebra[{\comp}] \otimes \algebra[{\comp[\ell]}]^\op }(\sbim{\comp}{\comp'}\otimes_{\algebra[{\comp'}]}N,M) \simeq q^{\shift(\comp) - \shift(\comp')}
  \HOM_{\algebra[\comp] \otimes \algebra[{\comp[\ell]}]^\op}(N,\sbim{\comp'}{\comp}\otimes_{\algebra[{\comp}]}M).
\end{align}
Similarly, for any graded $(\algebra[{\comp[\ell]}], \algebra[\comp])$-bimodule $M$ and $(\algebra[{\comp[\ell]}], \algebra[{\comp'}],)$-bimodule $N$:   
\begin{align}
  &\HOM_{\algebra[{\comp[\ell]}] \otimes \algebra[{\comp'}]^\op}(M \otimes_{\algebra[{\comp}]} \sbim{\comp}{\comp'},N) \simeq q^{\shift(\comp') - \shift(\comp)}
  \HOM_{\algebra[{\comp[\ell]}] \otimes \algebra[\comp]^\op}(M,N \otimes_{\algebra[{\comp'}]} \sbim{\comp'}{\comp}), \qquad \text{and}\\
  &\HOM_{\algebra[{\comp[\ell]}] \otimes \algebra[{\comp}]^\op}(N\otimes_{\algebra[{\comp'}]} \sbim{\comp'}{\comp},M) \simeq q^{\shift(\comp') - \shift(\comp)}
  \HOM_{\algebra[{\comp[\ell]}] \otimes \algebra[\comp]^\op}(N,M \otimes_{\algebra[{\comp}]} \sbim{\comp}{\comp'}).
\end{align}
for any graded right $\algebra[\comp]$-module $M$ and any graded right $\algebra[{\comp'}]$-module $N$.   

\begin{cor}
  If $M$ and $N$ are two compositions of Bott--Samelson bimodules, $M$ being an $(\algebra[\comp], \algebra[{\comp'}])$-bimodule and $N$  an $(\algebra[{\comp'}], \algebra[{\comp}])$-bimodule, then:
  \begin{align}
    \HOM_{\algebra[{\comp'}]^\mathrm{e}}(\algebra[{\comp'}], M \otimes_{\algebra[{\comp}]} N) \simeq
        q^{2\shift(\comp') - 2\shift(\comp)}\HOM_{\algebra[{\comp}]^\mathrm{e}}(\algebra[{\comp}], N \otimes_{\algebra[{\comp'}]} M). \end{align}
\end{cor}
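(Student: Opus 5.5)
The plan is to use the biadjunctions above to transport the identity bimodule from one side to the other. The starting point is the case where $M$ is a single Bott--Samelson bimodule $\sbim{\comp}{\comp'}$, with $\comp'$ a decomposition of $\comp$. By definition $\sbim{\comp}{\comp'}$ is $\algebra[\comp']$ with a grading shift, viewed as an $(\algebra[\comp],\algebra[\comp'])$-bimodule. The general case will follow by induction on the number of Bott--Samelson factors of $M$. Write $M = M_1 \otimes M_2$ with $M_1$ a single generating bimodule and $M_2$ shorter. Then move the factors one at a time with the adjunction isomorphisms, passing through intermediate compositions $\comp''$; the shifts telescope to $2\shift(\comp')-2\shift(\comp)$.

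For the base step I work in the bimodule language $\HOM_{A^{\mathrm e}}(A,X)=\HOM_{A\otimes A^\op}(A,X)$. Take $\HOM_{\algebra[\comp']^{\mathrm e}}(\algebra[\comp'], M\otimes_{\algebra[\comp]} N)$ with $M=\sbim{\comp'}{\comp}$, so that $\comp'$ is a decomposition of $\comp$; the other orientation is symmetric.
\begin{enumerate}
\item Apply the adjunction on the left factor, moving $\sbim{\comp'}{\comp}$ off $M\otimes N$ to the source. This gives a space of homs from $\sbim{\comp}{\comp'}\otimes_{\algebra[\comp']}\algebra[\comp'] = \sbim{\comp}{\comp'}$ to $N$, as $(\algebra[\comp],\algebra[\comp'])$-bimodules, with a shift $q^{\pm(\shift(\comp')-\shift(\comp))}$.
\item Rewrite $\sbim{\comp}{\comp'}$ as $\algebra[\comp]\otimes_{\algebra[\comp]}\sbim{\comp}{\comp'}$ and apply the right-handed adjunction. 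This moves the factor to the target side as $N\otimes_{\algebra[\comp']}\sbim{\comp'}{\comp}$ and contributes a second shift of the same sign.
\end{enumerate}
The result is $\HOM_{\algebra[\comp]^{\mathrm e}}(\algebra[\comp], N\otimes_{\algebra[\comp']} M)$ with total shift $q^{2\shift(\comp')-2\shift(\comp)}$. When $M$ goes in the other direction, i.e.\ $\comp$ is a decomposition of $\comp'$, the roles swap and the sign of each shift flips, so the total is still $2\shift(\comp')-2\shift(\comp)$.

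The main obstacle is the sign bookkeeping. The four displayed adjunction formulas have shifts that must be matched against the conventions $q^{s}M$ and $\HOM$ grading. The formulas as printed are not fully symmetric; for example, the last one carries $\shift(\comp')-\shift(\comp)$ where one might expect the opposite sign. I would recheck each adjunction from the Frobenius structure, where the trace $\algebra[\comp']\to\algebra[\comp]$ has degree $-2(\shift(\comp')-\shift(\comp))$, together with the shift $q^{(\shift(\comp)-\shift(\comp'))/2}$ built into $\sbim{\comp}{\comp'}$. I would confirm these on the rank-one example $\comp=(2)$, $\comp'=(1,1)$ by comparing graded ranks. The induction then only needs that the intermediate shifts cancel: each interior composition enters once with each sign.
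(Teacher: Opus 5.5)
Your proposal is correct and follows the argument the paper intends. The paper states the corollary without proof as an immediate consequence of the displayed biadjunction isomorphisms: apply them once on each side to a single Bott--Samelson factor, each application contributing $q^{\shift(\comp')-\shift(\comp)}$, then iterate over the factors of $M$ with telescoping shifts.

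Your suspicion about the last displayed formula is justified. By left--right symmetry with the second formula, its shift should be $q^{\shift(\comp)-\shift(\comp')}$, and the restriction-type base case (which uses the first and fourth formulas) needs exactly this corrected sign. Likewise, the statement's typing should read ``$M$ an $(\algebra[{\comp'}],\algebra[\comp])$-bimodule'', which is what you implicitly assumed.
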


\section{2-equivalence}
\label{sec:2E}

\subsection{A representation 2-functor}
\label{sec:repr-2-funct}

The aim of this part is to describe a $2$-functor $\rep$ from $\blFoam[\myk]$ to $\tSSoergel{\QQ}{\myk}$. 
We think of this 2-functor as a representation of a topological 2-category of braid-like foams into the algebraic 2-category of Soergel bimodules. 
 This 2-functor  maps bl-webs to homs between Soergel bimodules, and it also allows to describe elements of Soergel bimodules in terms of foams.

From now on, we suppose that $\scalars= \QQ$, so that $\tSSoe = \tSSoergel{\QQ}{\myk}$.  
 If $\pos$ is a position and $\comp=\pi(\pos)$ its composition part, we set $\rep[\pos] = \algebra[\comp]$.
In order to describe $\rep$ on $1$-morphisms, we need to introduce standard trees and foams of element type.

Recall that $\comp[\myk]$ is the composition of $\myk$ of length $1$. Let it also denote the position $(\pos[\myk], \frac12)$, that is the position which consists of  one point of $I$ (its middle of point) with weight $\myk$. If $\pos$ is a position, denote by $\comp=(k_1, \dots, k_\ell)$ its composition part $\pi(\pos)$. Let us fix a standard bl-web $\tree[\pos]: \pos[\myk] \to \pos$ shaped as a tree and described by the following diagram:
\[
  \NB{\tikz[]{\input{\imagesfolder/hhf_std-tree2}}}
\]
The precise choice of the tree is not important; we fix such a tree for each position.

\begin{dfn}
  \label{dfn:element-type}
  Let $\pos[p^0]$ and $\pos[p^1]$ be two positions and $\web\colon \pos[p^0] \to \pos[p^1]$ be a bl-web. A bl-foam $\foam$ is of \emph{element type for $\web$} if
  $\foam \cap s_b = -\pos[\myk] \times I$,  $\foam \cap s_t = \web$, $\foam \cap s_l = -\tree[{\pos[p^0]}] $ and  $\foam \cap s_r = \tree[{\pos[p^1]}]$ (see Figure~\ref{fig:bl-foam-example}).
  The \emph{degree} of a foam $\foam$ of element type is given by:
  \begin{equation}
    \label{eq:2}
    \deg{\foam}:= -\chi(\Sigma_\myk(\foam)) + 2\sum_f \deg{P_f} .
  \end{equation}
  A bl-foam $F$ of element type is \emph{tree-like} if for  any $t \in I$, $F\cap \left(\{t\} \times I^2\right)$ is a tree, and if all non-trivial decorations are carried by facets intersecting $s_t$.
\end{dfn}
\begin{figure}[ht]
  \centering
  \NB{\tikz[]{\input{\imagesfolder/hhf_bl-foam-example}}}
  \caption{Example of the boundary condition of a bl-foam of element type.}
  \label{fig:bl-foam-example}
\end{figure}
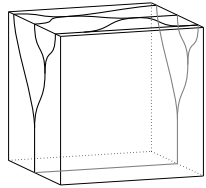

In tree-like foams, non-trivial decorations are only placed on facets that  bound the top bl-web. The tree-like condition also ensures that there is a canonical correspondence between such facets and the edges of the top bl-web. Hence, for two tree-like foams for $\web$, we can compare their decorations and, in particular, make sense of the hypothesis of Lemma~\ref{lem:tree-like-2}.

\begin{lem}[{\cite[Lemmas 3.30]{RW2}}]
  \label{lem:tree-like-1}
  Every element type foam is $\infty$-equivalent to a linear combination of tree-like foams.
\end{lem}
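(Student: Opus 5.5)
Given an element-type foam $\foam$ for $\web\colon \pos[p^0]\to\pos[p^1]$, the plan is to put $\foam$ in general position with respect to the projection to the first coordinate $t\in I$, in which slices run from left to right. Since $\foam$ is a bl-foam, each generic slice $\foam_t := \foam\cap(\{t\}\times I^2)$ is a bl-web from $\pos[\myk]$ (on $s_b$) to the corresponding slice of $\web$ on $s_t$. At $t=0$ the slice is the tree $\tree[{\pos[p^0]}]$, and at $t=1$ it is $\tree[{\pos[p^1]}]$. The goal is to replace $\foam$, up to $\infty$-equivalence, by foams whose slices are all trees, meaning they contain no cycles of the underlying graph. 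Decorations will be handled at the end.

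The first step is a Morse-theoretic decomposition. By genericity, $\foam$ is a composition, in the $t$-direction, of elementary pieces: isotopies of slices, the passage of a trivalent vertex of $\web$ through the slicing, and critical moments of facets and bindings. The critical moments are births and deaths of digons (saddle/cup/cap type creating or removing a bigon), square moves, and (co)associativity moves. A slice stops being a tree exactly when a digon or square face appears. By the non-negativity of the relevant Euler-characteristic count, the only generic way to create a loop in a slice of a directed web is a digon birth, a zip/unzip along a binding that splits an edge $a+b$ into $a$ and $b$ and later merges them. Square faces come from digons by further moves.

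The second step is to simplify these loops by induction on the total number of internal faces appearing across all slices, or on the number of critical points creating digons. For a digon that is born and later dies, I would first move the decorations on its two facets to a single point using Lemma~\ref{lem:foam-dot-migration}. I would then use the isomorphisms of Proposition~\ref{prop:iso-web}, which hold in $\blFoam$, so that they are realized by foams whose composites are $\infty$-equivalent to identities. In particular, the digon isomorphism writes the identity of the digon web as a sum of foams factoring through the single edge of thickness $a+b$: zip, then decoration by Schur polynomials, then unzip. Inserting this identity in a slice just after the digon is born replaces that part of $\foam$ by a sum in which the digon is immediately collapsed. The leftover closed pieces are bubbles, which Lemma~\ref{lem:bubble} evaluates to a symmetric decoration on the thick facet, or diabolos, which Lemma~\ref{lem:diabolo} reduces to decorated disks. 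The square and (co)associativity cases are handled the same way via the corresponding isomorphisms in Proposition~\ref{prop:iso-web}, reducing a square face to digons or thick edges. Each replacement strictly lowers the complexity, so the induction terminates with foams whose slices are all trees.

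The last step concerns decorations on facets that do not meet $s_t$. In a foam with all-tree slices, each such facet is adjacent, through a binding, to facets leading toward the top. Dot migration (Lemma~\ref{lem:foam-dot-migration}) along bindings pushes symmetric polynomials on a thick facet to $a$- and $b$-facets. Repeating this moves every decoration onto facets that bound $\web$, which gives a linear combination of tree-like foams.

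The main obstacle will be the second step. The difficulty is guaranteeing that the local isomorphisms can always be applied within a family of slices so that the complexity genuinely decreases, and that the whole procedure stays within bl-foams rather than creating non-directed pieces. If this becomes awkward, the fallback is to apply Proposition~\ref{prop:QR} slice-wise in a relative, non-annular form. Alternatively, one can cut $\foam$ along a tree slice and argue that the space of element-type foams modulo $\infty$-equivalence is spanned by images of tree-like ones.
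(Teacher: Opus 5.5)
There is a genuine gap, and it is in the step you flag yourself: your second step has no complexity that actually decreases. Suppose you insert $\mathrm{id}_D=\sum_\lambda\psi_\lambda\circ\varphi_\lambda$ (the digon isomorphism of Proposition~\ref{prop:iso-web}) just after a digon is born. This does not eliminate the digon. The birth followed by the zip $\varphi_\lambda$ is a bubble, which Lemma~\ref{lem:bubble} turns into a decoration. But $\psi_\lambda$ is again an unzip, so the same digon is re-born a moment later with exactly the same future.

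The local relations you invoke dispose of a cycle only when its whole life is local. That happens when the binding responsible for its birth and death is a circle bounding discs of the two thin facets (a bubble, Lemma~\ref{lem:bubble}) or a disc of the thick facet (a diabolo, Lemma~\ref{lem:diabolo}). In general, between birth and death the edges of the new cycle merge with or split off neighbouring strands, and pass through singular vertices and vertices of $\Gamma$. The cycle is then no longer a digon, and your procedure has nothing to apply. Likewise, the square isomorphisms of Proposition~\ref{prop:iso-web} swap the order of the two rungs, with a direct sum of shifted terms. They do not shrink a square face into digons. So the induction is not well founded, and the two fallbacks you list are only directions, not arguments.

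The event analysis behind the induction is also inaccurate. Each slice $F\cap(\{t\}\times I^2)$ is a directed web out of a single point of thickness $k$, so it is connected, and its cycles are counted by its merge vertices. Merges do not appear only at digon births:
\begin{itemize}
\item A critical point of $x_1$ along a binding comes in two flavours. When the thick facet lies inside the fold, you see two adjacent strands $a,b$ merge and immediately split again.
\item A merge also enters the slice when $t$ crosses a merge vertex of $\Gamma$ whose binding bends towards the $(a+b)$-side.
\end{itemize}
Conversely, singular vertices act on slices by two-vertex, associativity-type moves. These never change the number of merges, and they are never square moves.

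Your last step, pushing decorations upward by dot migration once all slices are trees, is fine. What is missing is a global mechanism controlling the whole $t$-evolution of the slices, rather than loop-by-loop surgery. The paper gives no proof here to compare against: it imports the lemma from \cite{RW2}. So your argument has to stand on its own, and as written it does not.
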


\begin{lem}[{\cite[Lemmas 3.32]{RW2}}]
    \label{lem:tree-like-2}
If two tree-like foams for $\web$ have the same decorations, they are $\infty$-equivalent.
\end{lem}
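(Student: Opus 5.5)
We need to prove Lemma~\ref{lem:tree-like-2}: two tree-like foams of element type for $\web$ with the same decorations are $\infty$-equivalent.

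The plan is to reduce both foams to a common normal form using local moves that preserve $\infty$-equivalence. First, slide all decorations down to the top boundary web. In a tree-like foam $\foam$ for $\web$, every nontrivial decoration already sits on a facet meeting $s_t$, and these facets correspond canonically to the edges of $\web$. So the data of $\foam$ that is not fixed by hypothesis is purely topological: the trivially decorated "tree skeleton" connecting the bottom $\pos[\myk]\times I$ and the side trees $\tree[{\pos[p^0]}]$, $\tree[{\pos[p^1]}]$ to $\web$. Both foams therefore have the form $\foam = \foam^{\circ}\cdot D$, where $D$ is the same collection of decorations near $s_t$. The problem thus reduces to showing that two \emph{undecorated} tree-like foams with the same boundary are $\infty$-equivalent. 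By multiplicativity of decorations, and since $\infty$-equivalence is compatible with gluing, it suffices to handle the undecorated case.

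For undecorated tree-like foams, I would use the sweep-out in the $t$-direction, namely the first coordinate. Each slice $\foam\cap(\{t\}\times I^2)$ is a tree-shaped bl-web from $\pos[\myk]$. A generic tree-like foam is therefore a movie of trees, whose elementary changes are
\begin{itemize}
\item isotopies,
\item (co)associativity moves on a tree, passing through a singular vertex,
\item births or deaths of trivalent vertices at the boundary position where an edge of the top web splits or merges.
\end{itemize}
Since every slice is a tree, no digon or square relations occur, so no bubbles or diabolos are created.

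I would show that any two such movies with the same ends are related by movie moves that hold up to $\infty$-equivalence. These moves include the pentagon/Zamolodchikov-type relations among associativity moves and the commutation of distant moves. Each of them is an isotopy of foams, or an equivalence of foams with simply connected facets that the evaluation $\tau_\myN$ cannot distinguish. Concretely, I would instead argue via evaluation. Fix any closing foam $H$. Because the foam is tree-like, each facet of $\foam$ is a disk, and each $\gll_\myN$-coloring of $H\circ \foam$ is determined by its restriction to $H$ together with the colors along $\web$; the tree structure propagates colors uniquely. The Euler characteristics of the monochromatic and bicolored surfaces depend only on the combinatorics of the tree fibration, and these agree for the two foams. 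Hence $\tau_\myN(H\circ\foam)=\tau_\myN(H\circ\foam')$ term by term.

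The main obstacle is this last step. Colorings of a tree need not be determined by the leaves: at a merge, the pigment set of the $(a+b)$-edge is the union of its children's sets, so uniqueness holds downward from the leaves but may fail elsewhere. One must also check that the signs and the Euler characteristics of the bicolored surfaces coincide. I would first try to prove directly that any two tree-like foams with the same boundary are isotopic, up to associativity moves whose local foams are $\infty$-equivalent to one another; the latter can be checked by a small evaluation computation on the associativity foam. I would fall back on the coloring comparison only if the isotopy classification proves hard.
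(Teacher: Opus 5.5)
Your reduction to undecorated foams is fine. However, you finish neither of your two routes, and the one you make primary has a genuine gap. The movie-move argument needs two things you do not supply:
\begin{itemize}
\item a coherence statement saying that any two generic movies of split-trees with the same ends are connected by an explicit finite list of moves, including the events where a split vertex reaches $s_t$ at a vertex of $\Gamma$;
\item a proof that each of these moves preserves $\infty$-equivalence.
\end{itemize}
Your claim that the moves are isotopies of foams is false. The two sides of the pentagon have two and three singular vertices. An associativity move followed by its inverse has two singular vertices, while the identity foam has none. So each local move needs an evaluation argument, and that is exactly the argument you relegate to a fallback. This route only adds combinatorics on top of it.

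The coloring comparison is the right proof, and the obstacles you raise go away once you examine the slices.
\begin{itemize}
\item \emph{Colorings are determined.} Every slice $F\cap(\{t\}\times I^2)$ is a tree whose vertices are splits out of the root. So every edge is an ancestor of leaves (the edges of $\Gamma$ at that $t$, or of the side trees at $t=0,1$). Its pigment set is the union of those of its descendant leaves, and there is no ``elsewhere''.
\item \emph{Colorings exist.} Leaf pigment sets in a slice are pairwise disjoint: this holds on the left tree, and the flow condition preserves it along $\Gamma$. Hence every coloring of $H$ extends uniquely both to $H\circ F$ and to $H\circ F'$.
\item \emph{Euler characteristics agree.} Slice by slice, the $i$-monochrome part of $F$ is the path from the root to the leaf containing $i$, so it is a disk. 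Assume both $i$ and $j$ lie in the root's pigment set (otherwise the bichrome part is a monochrome disk or empty). Then the $(i,j)$-bichrome part is, slice by slice, the path joining the leaves containing $i$ and $j$ through the vertex where they separate. It is nonempty exactly on the set $T_{ij}$ of $t$ for which $i,j$ lie in different leaves, so it is a disjoint union of disks, one per component of $T_{ij}$. Both $T_{ij}$ and the boundaries of these disks are fixed by the boundary coloring. Therefore all Euler characteristics entering the Robert--Wagner formula coincide for $H\circ F$ and $H\circ F'$.
\item \emph{Signs agree.} The bindings where the $i$-only, $j$-only and $ij$ facets meet form the trajectory of the separating vertex. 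This is one arc per component of $T_{ij}$, with the same endpoints on $\partial C$ for $F$ and $F'$. The circles counted in the sign are therefore built from the same arcs of $H$, joined in the same way. Positivity is constant along a circle, so it can be read on the $H$-part.
\item \emph{Decorations agree.} They agree by hypothesis on the facets meeting $s_t$, and all other facets are trivially decorated.
\end{itemize}
Hence $\tau_N(H\circ F)=\tau_N(H\circ F')$ term by term. Neither your claim that facets are disks nor any isotopy classification is needed.
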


For $\web$ a bl-web, denote by $\rep[\web]$ the space of $\QQ$-linear combinations of foams of element type for $\web$, regarded up to $\infty$-equivalence. We can  think of $\rep[\web]$ as  foams cobordisms (subject to boundary and bl-foam conditions) from an interval of thickness $\myk$ to $\web$, modulo the $\infty$-equivalence relation.

Let $\web$ be a bl-web and $M := \fone[\web]$ be the corresponding Soergel bimodule. Since dot-migration relations on foams follow from the $\infty$-equivalence (see Lemma~\ref{lem:foam-dot-migration}), Lemma \ref{lem:tree-like-2} implies that we can define a map from $M$ to $\rep[\web]$, by mapping a decoration on $\web$ to the  corresponding tree-like foam. Lemma~\ref{lem:tree-like-1} indicates that this map is surjective. Comparing graded dimensions of $M$ and $\rep[\web]$, one obtains that this map is an isomorphism.

\begin{lem}[{\cite[Section 4.3]{RW2}}]
  \label{lem:foamy-soergel-1}
  For any bl-web $\web$, $\rep[\web]$ is isomorphic to $\fone[\web]$ as a graded $\QQ$-vector space. 
\end{lem}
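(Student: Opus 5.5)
The plan is to build an explicit graded linear map $\phi\colon \fone[\web] \to \rep[\web]$, prove it is surjective, and then compare graded dimensions. The comparison is the point that needs real input.

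\emph{Construction of $\phi$.} Represent elements of $\fone[\web]$ by decorations $(D_e)_{e\in E(\web)}$ on the edges of $\web$, modulo dot migration at the vertices. Send such a decoration to a tree-like foam of element type for $\web$ carrying $D_e$ on the facet corresponding to $e$. By Lemma~\ref{lem:tree-like-2} the class of this foam does not depend on which tree-like foam is chosen. By Lemma~\ref{lem:foam-dot-migration}, dot migration on the web is respected up to $\infty$-equivalence. So $\phi$ is well defined and $\QQ$-linear. It is homogeneous once we check that the degree of a tree-like foam, as in Definition~\ref{dfn:element-type}, equals $2\sum_e\deg{D_e}-\shift(\web)$ plus a constant depending only on the boundary positions. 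This is an Euler characteristic count on $\Sigma_\myk$ of a tree-like foam: each split or merge vertex of $\web$ contributes a binding whose local contribution gives $-\frac{ab}{2}$.

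\emph{Surjectivity.} By Lemma~\ref{lem:tree-like-1}, every element-type foam is $\infty$-equivalent to a linear combination of tree-like foams. Every tree-like foam is the image of its decoration, so $\phi$ is onto.

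\emph{Dimension comparison (main obstacle).} It remains to show that the graded dimensions of $\rep[\web]$ and $\fone[\web]$ agree in each degree, or at least that $\dim\rep[\web]_i\ge\dim\fone[\web]_i$. I would use the local relations. Proposition~\ref{prop:iso-web} (foam side) and Proposition~\ref{prop:soergel-skein} (bimodule side) satisfy the same skein identities. The assignment $\web\mapsto\rep[\web]$ is functorial under gluing foams of morphism type on top, so isomorphisms of 1-morphisms in $\blFoam$ induce isomorphisms of the spaces $\rep$. I would then run a reduction like the Queffelec--Rose algorithm (Proposition~\ref{prop:QR}), applied now to the planar, non-annular webs with tree boundary. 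This expresses both sides for $\web$, after adding the same auxiliary summands, as the same sums of shifted copies of a basic web $\iweb{\pos}{\pos}$. Cancelling the common summands, a Krull--Schmidt type cancellation of graded dimensions, reduces the claim to the trivial webs.

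For a trivial web, $\fone$ gives $\algebra[\comp]$. On the foam side, tree-like foams with decorations give a spanning set of the same graded size. To get injectivity, i.e.\ a lower bound on $\rep$, I would pair with the dual tree foams, use the bubble and diabolo removals (Lemmas~\ref{lem:bubble}, \ref{lem:diabolo}), and evaluate. This gives the Frobenius/Sylvester pairing of $\algebra[\comp]$ over $\algebra[{\comp[\myk]}]$, which is nondegenerate. Hence $\phi$ is injective there.

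The hard step is making the reduction legitimate: one must check that the foam isomorphisms act compatibly on element-type foam spaces, with the correct shifts. Failing that, an alternative is to show directly that the evaluation pairing $\rep[\web]\times\rep[\web^\dagger]\to\algebra[{\comp[\myk]}]$ pulls back along $\phi$ to a nondegenerate form on $\fone[\web]$. Injectivity would then follow immediately.
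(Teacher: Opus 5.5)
Your construction of $\phi$ and the surjectivity step are exactly the paper's argument: decorations go to tree-like foams, the map is well defined by Lemmas~\ref{lem:foam-dot-migration} and~\ref{lem:tree-like-2}, and it is onto by Lemma~\ref{lem:tree-like-1}. The paper then just compares graded dimensions, taking the graded dimension of $\mathsf{Rep}(\Gamma)$ from \cite{RW2}.

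Where you try to supply this lower bound yourself, the reduction step fails as formulated. Proposition~\ref{prop:QR} is an annular statement: its algorithm uses the trace to slide rungs around the core. In the planar category $\mathrm{Hom}(\underline{p^0},\underline{p^1})$, webs do not reduce to shifted identity webs, even after adding auxiliary summands:
\begin{itemize}
\item if $\pi(\underline{p^0})\neq\pi(\underline{p^1})$, there is no identity web at all;
\item for $\underline{p^0}=\underline{p^1}=(1,1)$, the web ``merge then split'' goes to $B_s=\mathbb{A}\otimes_{\mathbb{A}^{s}}\mathbb{A}$, where $x_1\otimes 1\neq 1\otimes x_1$. Left and right multiplication by $x_1$ agree on every summand of $P(q)\,\mathbb{A}$, so $B_s$ is never a direct summand of it.
\end{itemize}
So no isomorphism $\Gamma\oplus Q(q)\,\mathrm{Id}\simeq P(q)\,\mathrm{Id}$ exists in general, and you cannot cancel down to trivial webs $\mathrm{Id}_{\underline{p}}$ with general $\underline{p}$, as your base case presupposes. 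By contrast, the compatibility you single out as the hard step is automatic: gluing on top is well defined on $\infty$-equivalence classes, so isomorphisms of 1-morphisms induce isomorphisms of the spaces $\mathsf{Rep}$.

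The step becomes correct once you absorb the side trees.
\begin{itemize}
\item \emph{Absorbing the trees.} Sliding $\mathsf{Y}_{\underline{p^0}}$ and $\mathsf{Y}_{\underline{p^1}}$ from $s_l$, $s_r$ onto $s_t$ identifies element-type foams for $\Gamma$ with morphism-type foams $\mathrm{Id}_{(\mathtt{k})}\to\Gamma':=\mathsf{Y}_{\underline{p^1}}^\dagger\circ\Gamma\circ\mathsf{Y}_{\underline{p^0}}$. Thus $\mathsf{Rep}(\Gamma)\cong 2\mathrm{HOM}(\mathrm{Id}_{(\mathtt{k})},\Gamma')$, while $\mathcal{F}(\Gamma')$ is $\mathcal{F}(\Gamma)$ up to a shift depending only on the boundary.
\item \emph{Highest-weight normal ordering (not QR).} Now $\Gamma'$ is a ladder from the highest weight $(\mathtt{k},0,\dots,0)$ to itself. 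A rung moving thickness towards the first upright commutes past one moving thickness away from it on a different pair of uprights, by isotopy or (co)associativity. For two opposite rungs of thicknesses $c,d$ between uprights of thicknesses $a,b$, the square isomorphisms of Proposition~\ref{prop:iso-web} for the two orders require $b-a+d-c\geq0$, resp.\ $a-b+c-d\geq0$. One of these always holds; the other case adds auxiliary summands on the side of $\Gamma'$. Once all rungs moving towards the first upright sit at the bottom, any such rung would draw from an empty upright of $(\mathtt{k},0,\dots,0)$, so only rung-free terms survive.
\item \emph{Conclusion of the reduction.} Propositions~\ref{prop:iso-web} and~\ref{prop:soergel-skein} provide the same isomorphisms, so this yields $\Gamma'\oplus Q(q)\,\mathrm{Id}_{(\mathtt{k})}\simeq P(q)\,\mathrm{Id}_{(\mathtt{k})}$ on both sides. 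The only base case is then $\mathrm{Id}_{(\mathtt{k})}$, i.e.\ decorated disks of thickness $\mathtt{k}$, and your Frobenius pairing over $\mathbb{A}_{(\mathtt{k})}$ is not needed.
\item \emph{The base case.} Your ``evaluate'' needs care here. Up to sign, $\tau_{\mathtt{N}}$ of a $\mathtt{k}$-sphere decorated by $R$ is $\sum_{|I|=\mathtt{k}}R(X_I)/\prod_{i\in I,\,j\notin I}(X_i-X_j)$. This vanishes whenever the polynomial degree of $R$ is below $\mathtt{k}(\mathtt{N}-\mathtt{k})$. So you must cap with a second disk decorated by a suitable $S$. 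Then use that this trace is nondegenerate on $\mathbb{Q}[X]^{\mathfrak{S}_{\mathtt{k}}\times\mathfrak{S}_{\mathtt{N}-\mathtt{k}}}$, which is generated over $\mathbb{Q}[X]^{\mathfrak{S}_{\mathtt{N}}}$ by symmetric polynomials in $X_1,\dots,X_{\mathtt{k}}$.
\end{itemize}

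Repaired this way, your route gives a self-contained proof of the dimension equality from the skein isomorphisms, in place of the paper's citation. Your fallback via a nondegenerate evaluation pairing is only a restatement of the injectivity you are after, not an independent route.
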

Let $\web\colon \pos[p^0] \to \pos[p^1]$ be a bl-web.  Multiplication by decorations on the facets touching $\tree[{\pos[p^0]}]$ and $\tree[{\pos[p^1]}]$ endows $\rep[\web]$ with an $(\rep[{\pos[p^0]}], \rep[{\pos[p^1]}])$-bimodule structure. This structure is compatible with the isomorphism of Lemma~\ref{lem:foamy-soergel-1}. Moreover it is also compatible with the composition of bl-webs and that of bimodules.
Hence, for any bl-web $\web$, $\rep[\web]$ is isomorphic to $\fone[\web]$ as a Soergel bimodule.

Let $\web_0, \web_1 \colon \pos[p^0] \to \pos[p^1]$ be two bl-webs,  $F\colon \web_0 \to \web_1$ a bl-foam of morphism type and $E$ a bl-foam of element type for $\web_0$. Form $F(E)$, a bl-foam of element type for $\web_1$, as depicted below:
\begin{align*}
\NB{\tikz[]{\input{\imagesfolder/hhf_stackingFE-2}}}
\end{align*}

Let us describe this procedure: the bl-foams $E$ and $F$ are stacked onto one another and scaled vertically. This produces a bl-foams which is almost of morphism type except that the trees on the sides have longer leaf segments due having attached $F$, which consists of parallel vertical segments on the sides. One then deforms the foam to shrink the leaf lines  and stretch the rest of the two trees back to their original embeddings.

This procedure is compatible with $\infty$-equivalence, so that $F$ induces a $\ZZ$-linear map $\rep[F] \colon \rep[\web_0] \to \rep[\web_1]$. Moreover, it commutes with action by $\rep[{\pos[p^0]}]$ and $\rep[{\pos[p^1]}]$, so that $\rep[F]$ is a bimodule map.

This whole discussion can be summarized as follows. 

\begin{prop}[{\cite[Proposition 4.15]{RW2}}]
  \label{prop:foamy-soergel-1}
 The map $\rep$ is a $2$-functor from $\blFoam[\myk]$ to $\tSSoe$.
\end{prop}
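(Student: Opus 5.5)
To show that $\rep$ is a $2$-functor from $\blFoam[\myk]$ to $\tSSoe$, I will define it on each layer, check that it is well defined, and then check that it respects both compositions. On objects $\rep[\pos]=\algebra[\pi(\pos)]$. This depends only on the composition part, so isotopic positions, which are identified by the unique isomorphisms $\iweb{\pos[p^0]}{\pos[p^1]}$, go to the same algebra.

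On a $1$-morphism $\web\colon\pos[p^0]\to\pos[p^1]$, set $\rep[\web]$ to be the space of element-type foams modulo $\infty$-equivalence, with the bimodule structure given by decorating the facets adjacent to the side trees $\tree[{\pos[p^0]}]$ and $\tree[{\pos[p^1]}]$.
\begin{itemize}
\item By Lemma~\ref{lem:foamy-soergel-1} and the subsequent discussion, $\rep[\web]\cong\fone[\web]$ as graded bimodules. So $\rep[\web]$ is a singular Bott--Samelson bimodule and lands in $\tSSoe$.
\item Shifted direct sums of webs go to the corresponding shifted direct sums.
\item For composition, I need a natural isomorphism $\rep[\web'\circ\web]\cong\rep[\web']\otimes_{\algebra[\pi(\pos[p^1])]}\rep[\web]$. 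It comes from gluing the two element-type foams along the common side tree $\tree[{\pos[p^1]}]$ and then collapsing the doubled tree.
\item To see that this gluing map is an isomorphism, I reduce via Lemmas~\ref{lem:tree-like-1} and \ref{lem:tree-like-2} to tree-like foams. There it becomes concatenation of edge decorations, which is exactly the tensor product description of $\fone$ (whose compatibility with dot migration is Lemma~\ref{lem:foam-dot-migration}). Gluing along the middle tree identifies the actions of $\algebra[\pi(\pos[p^1])]$ from both sides, so the map descends to the tensor product.
\item Associativity and unit coherence of this isomorphism follow because all the gluings are isotopic.
\end{itemize}

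On $2$-morphisms, $\rep[F]$ is defined by the stacking construction $E\mapsto F(E)$. I need four checks.
\begin{enumerate}
\item \textbf{Well defined modulo $\infty$-equivalence.} Suppose $\sum\lambda_iE_i$ is $\infty$-equivalent to $0$. For any test foam $H$ closing $F(E_i)$, the composite $H\circ F(E_i)$ is isotopic to $(H\circ F)\circ E_i$, where $H\circ F$ is a closing foam for the $E_i$. Hence all evaluations $\tau_\myN$ vanish. The same argument run in the other slot shows that $\rep[F]$ depends only on the $\infty$-equivalence class of $F$.
\item \textbf{Bimodule map.} Decorations near the side trees can be slid past $F$ by isotopy, since $F$ is just vertical sheets there.
\item \textbf{Degree.} The degree of $F$, with its boundary correction terms, adds to the degree of $E$ to give the degree of $F(E)$. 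This requires checking that the shift $\frac12\sum k_i(\myk-k_i)$ is exactly what the collapsing of the side trees contributes to $-\chi(\Sigma_\myk)$.
\item \textbf{Functoriality.} For vertical composition, $(G\circ F)(E)$ is isotopic to $G(F(E))$. For horizontal composition, $\rep[F'\cdot F]$ corresponds to $\rep[F']\otimes\rep[F]$ under the gluing isomorphism above, because gluing along side trees commutes with stacking. Identity foams act as identities.
\end{enumerate}

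I expect the main obstacle to be the $1$-morphism compatibility, namely showing that the gluing map onto $\rep[\web'\circ\web]$ is an isomorphism that descends to the tensor product, because that is where the tree-like normal form lemmas are really needed. The degree bookkeeping in step 3 is the other delicate point: the plan is to compute it on a product foam $\web\times I$ and then argue additively.
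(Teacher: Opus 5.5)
Your proposal is correct and follows the same route as the paper. The paper only sketches this before citing \cite{RW2}: it identifies $\rep[\web]$ with $\fone[\web]$ through tree-like foams (Lemmas~\ref{lem:tree-like-1}, \ref{lem:tree-like-2} and~\ref{lem:foamy-soergel-1}), defines $\rep[F]$ by the stacking $E\mapsto F(E)$, and notes that this respects $\infty$-equivalence and the side actions; you make the composition and degree checks explicit. One correction to the degree step: the boundary term $\frac12\sum_i k^0_i(\myk-k^0_i)+\frac12\sum_j k^1_j(\myk-k^1_j)$ equals $\chi(\Sigma_\myk(\web_0))$, because $\Sigma_\myk(\web_0)$ is a compact $1$-manifold with that many pairs of endpoints. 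Additivity therefore comes from inclusion--exclusion along the gluing web $\web_0$, not from collapsing the side trees (an isotopy), and your product-foam check $\mathrm{deg}(\web_0\times I)=0$ is exactly this computation.
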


This paper establishes the following result. 

\begin{thm}
  \label{thm:main}
  The $2$-functor $\rep$ is an equivalence of $2$-categories. 
\end{thm}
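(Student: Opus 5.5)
To prove Theorem~\ref{thm:main} I would check the three standard conditions for a 2-functor to be a 2-equivalence: essential surjectivity on objects, essential surjectivity on 1-morphisms, and bijectivity on 2-morphism spaces. The first two are essentially built in. Objects of $\tSSoe$ are the algebras $\algebra[\comp]$, and each equals $\rep[\pos]$ for any position $\pos$ with $\pi(\pos)=\comp$. Every singular Bott--Samelson bimodule $\sbim{\comp}{\comp'}$ is $\fone(\vertex{\pos}{\pos'})$. Since $\rep[\web]\simeq\fone[\web]$ as bimodules by Lemma~\ref{lem:foamy-soergel-1} and the subsequent discussion, and both $\rep$ and $\fone$ are compatible with composition and with direct sums of shifts, every 1-morphism of $\tSSoe$ lies in the essential image. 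So the whole content is full faithfulness on 2-morphisms: for bl-webs $\web_0,\web_1\colon\pos[p^0]\to\pos[p^1]$ the map
\[
\rep\colon 2\HOM_{\blFoam}(\web_0,\web_1)\to \HOM(\fone[\web_0],\fone[\web_1])
\]
should be a graded isomorphism.

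\textbf{Step 1: injectivity.} Suppose a combination $\sum\lambda_iF_i$ of morphism-type foams acts as zero on every element-type foam. I would show it is $\infty$-equivalent to zero. Take any closing foam $H$ and $\myN$. Using isotopy, cut $H\circ F_i$ along a vertical section so that it reads as $E'\circ F_i(E)$, where $E$ is an element-type foam for $\web_0$ and $E'$ caps off element-type foams for $\web_1$. If needed, first apply Lemma~\ref{lem:tree-like-1} to $H$, or use dot migration to push all of $H$ below $\web_0$ into a foam of element type. Then $\tau_\myN(H\circ\sum\lambda_iF_i)$ depends only on $\rep[\sum\lambda_iF_i](E)$, which vanishes. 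The delicate point is checking that every closure factors through the standard trees $\tree$. This holds because any bl-foam with bottom $\pos[p^0]\times I$ can be deformed, using bl-foam isotopies and associativity, so that its side boundary passes through the fixed trees.

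\textbf{Step 2: surjectivity by a dimension count.} I would compare graded dimensions rather than build preimages. Both sides admit the same adjunctions. On the Soergel side these are the Frobenius bi-adjunctions and the Corollary. On the foam side, bending a web around with the digon/zip-unzip foams gives isomorphisms
\[
2\HOM(\web_0,\web_1)\simeq 2\HOM(\id_{\pos[p^0]},\web_1\circ\web_0^\dagger)
\]
with matching shifts. By injectivity, it therefore suffices to show that the graded dimensions of $2\HOM_{\blFoam}(\id,\web)$ and $\HOM(\algebra[\comp],\fone[\web])$ agree for endo-webs $\web$. Passing to annular closures, both are computed by the horizontal trace, and I would use the Queffelec--Rose algorithm, Proposition~\ref{prop:QR}. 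It decomposes $\web\oplus\bigoplus Q_{\comp}\SS_{\comp}\simeq\bigoplus P_{\comp}\SS_{\comp}$ using only the moves of Proposition~\ref{prop:iso-web}. The same moves hold for Soergel bimodules by Proposition~\ref{prop:soergel-skein}, and $\rep$ carries one set of isomorphisms to the other. Hence both dimensions are expressed by the same formula in terms of the values on the objects $\SS_{\comp}$. For those, $2\HOM(\id_{\pos},\id_{\pos})$ and $\HOM(\algebra[\comp],\algebra[\comp])=\algebra[\comp]$ coincide by Lemma~\ref{lem:foamy-soergel-1}, since decorated identity foams are tree-like-type and $\rep$ is injective. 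Additivity of graded dimension under $\oplus$ and cancellation of the $Q_{\comp}$ terms then give equality of graded dimensions, which are finite in each degree. Injective plus equal dimension gives bijectivity.

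The main obstacle I expect is making Step 2 rigorous: showing that the hom-spaces relevant here are genuinely computed by the horizontal trace, so that the Queffelec--Rose decomposition, which lives in $\hTr[\blFoam]$, transfers to hom-dimensions. I would handle this by working with the functor $2\HOM(\id_{\pos},-)$ restricted to webs with the same source and target, checking it is invariant under the trace equivalence (conjugation by webs is absorbed by the bi-adjunctions on both sides), and verifying that the Soergel side carries the matching isomorphisms via $\rep$.
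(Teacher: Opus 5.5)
Your reduction to full faithfulness is correct, and bending plus the Queffelec--Rose decomposition in Step~2 matches the paper's Lemma~\ref{lem:reduction} in spirit. But the argument rests on Step~1, and Step~1 does not work. Step~2 uses it twice: for ``injective plus equal dimension'', and to size $2\HOM_{\blFoam}(\id_{\pos},\id_{\pos})$. Lemma~\ref{lem:foamy-soergel-1} concerns element-type foams and gives no a priori control over morphism-type foams bounding $\SS_{\comp}$, which is exactly what is at stake.

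In Step~1 you assume every closure $H$ can be rearranged so that $H\circ F_i$ reads $E'\circ F_i(E)$. In the definition of $\infty$-equivalence, however, $H$ is an arbitrary foam, not a bl-foam, so bl-isotopies and associativity say nothing about it. Even for a bl-closure the rearrangement fails. Making the piece of $H$ next to $\web_0$ of element type means routing the collar sheets over $\pos[p^0]\times I$ through the merge tree. That amounts to factoring the identity 2-morphism of $\id_{\pos[p^0]}$ through shifts of $\tree[{\pos[p^0]}]\circ\tree[{\pos[p^0]}]^\dagger$. Applying the 2-functor $\rep$ shows this is impossible already for $\comp=(1,1)$: $\algebra=\QQ[x_1,x_2]$ is not a direct summand of a sum of shifts of $\algebra\otimes_{\algebra^{\sg[2]}}\algebra$. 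The claim that the pairing of $F$ with all closures is determined by $\rep[F]$ is precisely the faithfulness to be proved.

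What is missing is a spanning theorem for foams modulo $\infty$-equivalence. The paper first uses naturality of $\rep$ with respect to the bending and Queffelec--Rose isomorphisms. Because of that naturality, injectivity is needed only for $\web_0=\web_1=\id_{\pos}$, not for general webs. In that case it shows every bl-foam bounding $\SS_{\comp}$ is $\infty$-equivalent to a combination of decorated disks, in three steps:
\begin{enumerate}
\item Removing a thin tube in the $x_2$-direction turns the foam into an annular vinyl foam.
\item Proposition~\ref{prop:trace-like}, the Beliakova--Habiro--Lauda--Webster trace theorem via Queffelec--Rose, replaces it by decorated copies of $\web\times\SS^1$.
\item Lemmas~\ref{lem:bubble} and~\ref{lem:diabolo} remove the resulting bubbles and diabolos by induction on the number of vertices of $\web$.
\end{enumerate}
Without an input of this strength, neither your Step~1 nor the dimension count in Step~2 can be completed.
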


\begin{proof}
  The functor $\rep$ is  split essentially surjective: for each composition $\comp$ of $\myk$, one has $\algebra[\comp] \simeq \rep[\pos]$ for $\pos$ the evenly spaced position carrying $\comp$. Hence it remains to show that $\rep$ is fully faithful. In this context, this means that, for any positions $\pos[p^0]$ and $\pos[p^1]$, the functor induced by $\rep$ on the category $\Hom(\pos[p^0], \pos[p^1])$ is essentially surjective and fully faithfull. Once more, essential surjectivity is easy:  by definition of $\tSSoe$, objects are directed sums of shifted version of bimodules coming from bl-webs. The only  remaining thing to show is full faithfulness of this functor. That is, that it induces bijections between spaces of $2$-morphisms.
  
Instead of looking at 2-morphisms of degree $0$ between shifted versions of 1-morphisms, we actually work with graded version of 2HOM-spaces between unshifted $1$-morphisms. 
  Hence the theorem follows from the following lemma.
   \begin{lem}\label{lem:main}
    Let $\web_0, \web_1\colon \pos[p^0] \to \pos[p^1]$ be two bl-webs of index $\myk$. The 2-functor $\rep$ induces an isomorphism of graded vector spaces 
    \[2\HOM_{\blFoam[\myk]}(\web_0, \web_1)\qquad \cong\qquad 2\HOM_{\tSSoe}(\rep[\web_0], \rep[\web_1]). \qedhere \] 
  \end{lem}
\end{proof}

\subsection{Proof of Lemma~\ref{lem:main}}
\label{sec:proof-theor-refthm:m}

The proof of Lemma~\ref{lem:main} is done in two steps: first reduce to a
simpler setting, then  prove the assertion in this simpler
setting.

The key arguments for reduction are the compatibility of $\rep$ with the isomorphisms described in Section~\ref{sec:about-2-morphisms} and the Queffelec--Rose algorithm \cite{queffelec2014mathfrak}.

\begin{lem}
  \label{lem:reduction}
  It is enough to prove  Lemma~\ref{lem:main} in the case $\pos[p^0] =  \pos[p^1]$ and $\web_0=\web_1 = \pos[p^0] \times I$. In other words, we can only consider the case where both $\web_0$ and $\web_1$ are identity $1$-morphisms.
\end{lem}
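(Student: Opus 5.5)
We reduce in two stages: first to the case where $\web_0$ is an identity web and $\web_1$ is arbitrary, and then to the case where both are identities. Throughout, we use that $\rep$ is a $2$-functor (Proposition~\ref{prop:foamy-soergel-1}) and is compatible with all the structure used below.

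\textbf{Biadjunctions on both sides.} In $\blFoam[\myk]$, a vertex web $\vertex{\pos}{\pos'}$ and its mirror $\vertex{\pos'}{\pos}$ (the dagger) are biadjoint. The units and counits are the elementary cup and cap foams (zip/unzip type foams). The zigzag relations hold up to isotopy, hence up to $\infty$-equivalence, with degrees matching the shifts $\shift(\pos')-\shift(\pos)$.

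Under $\rep$, these foams are sent to the units and counits of the induction/restriction biadjunction of Section~\ref{sec:about-2-morphisms}. Checking this requires the Frobenius trace to match the foam cap. We verify it by evaluating on tree-like foams, using Lemma~\ref{lem:tree-like-2} together with the bubble relation of Lemma~\ref{lem:bubble}. Consequently, for any webs, the adjunction isomorphisms
\[2\HOM(\web\circ\vertex{\pos}{\pos'},\web')\cong 2\HOM(\web,\web'\circ\vertex{\pos'}{\pos})\]
on the foam side and the corresponding isomorphisms on the bimodule side are intertwined by $\rep$. The same holds on the left. By the Remark on $\blweb[\myk]$, every bl-web is a composite of $\vertex{\bullet}{\bullet}$'s and $\iweb{\bullet}{\bullet}$'s. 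Repeatedly moving the vertices of $\web_0$ to the other side therefore reduces to $\web_0=\id$, and similarly we may bend $\pos[p^0]$ around. Explicitly, we can pass to closures: $2\HOM(\web_0,\web_1)\cong 2\HOM(\id_{\pos},\web)$, where $\web$ is an endomorphism of a position. This is compatible with the horizontal trace, as in the Corollary of Section~\ref{sec:about-2-morphisms}.

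\textbf{Reducing $\web$ to identities.} Now let $\web\colon\pos\to\pos$, viewed in $\hTr[\blFoam]$; on the algebraic side, correspondingly, in the Hochschild-type degree-zero pairing. Apply Proposition~\ref{prop:QR} to get
\[\web\oplus\bigoplus Q_{\comp}\SS_{\comp}\simeq\bigoplus P_{\comp}\SS_{\comp}.\]
These isomorphisms are built from those of Proposition~\ref{prop:iso-web}, and $\rep$ sends them to the isomorphisms of Proposition~\ref{prop:soergel-skein}; that is, the foam isomorphisms are mapped to bimodule isomorphisms. Since the map induced by $\rep$ is additive and natural, it is an isomorphism on $\web\oplus\bigoplus Q_{\comp}\SS_{\comp}$ if it is an isomorphism on each $\SS_{\comp}$. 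Being a direct summand, the map on $\web$ is then also an isomorphism. Finally, $2\HOM(\id,\SS_{\comp})$ in the trace unwinds, by bending once more, to $2\HOM$ between identity $1$-morphisms of a position $\pos$ with $\pi(\pos)=\comp$. This is exactly the case stated in the lemma.

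\textbf{Main obstacle.} The main obstacle is ensuring that the Queffelec--Rose isomorphisms, which live in the annular/trace setting, are genuinely transported by $\rep$ to bimodule isomorphisms compatible with the trace identifications. One must verify that $\rep$ extends to horizontal traces and sends the Proposition~\ref{prop:iso-web} foams to isomorphisms. I would handle this by checking each generating isomorphism on tree-like element foams, using dot migration (Lemma~\ref{lem:foam-dot-migration}) and diabolo removal (Lemma~\ref{lem:diabolo}). Where direct computation is awkward, a fallback is to compare graded dimensions: Lemma~\ref{lem:foamy-soergel-1} shows that $\rep[F]$ on the relevant webs is a degree-preserving map between spaces of equal graded rank, so surjectivity would suffice.
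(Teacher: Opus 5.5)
Your proposal is correct and follows the same route as the paper. Bending webs around the cube, by foam deformations on one side and induction/restriction biadjunction on the other (intertwined by $\rep$), turns full faithfulness into an annular question, and the Queffelec--Rose algorithm in $\hTr[\blFoam]$ then reduces it to the concentric circles $\SS_{\comp}$. Your explicit naturality and direct-summand argument is needed because Proposition~\ref{prop:QR} only gives $\web\oplus\bigoplus_{\comp} Q_{\comp}\SS_{\comp}\simeq\bigoplus_{\comp} P_{\comp}\SS_{\comp}$, and it spells out a step the paper leaves implicit. The compatibility checks you flag as the main obstacle come for free from $\rep$ being a $2$-functor, since it sends foam cups, caps and isomorphisms to valid biadjunction data and isomorphisms, so no tree-like computation is required.
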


\begin{proof}
  First, we can use isomorphisms of 1-morphisms in the $2$-category $\blFoam$ to reduce the problem. We begin by proving that in addition to using the isomorphisms listed in Proposition~\ref{prop:iso-web} one can move webs along the cube.

  Recall that if $\web \colon \pos[p^0] \to \pos[p^1]$, $\web^\dagger$ is the bl-web from $\pos[p^1]$ to $\pos[p^0]$ obtained by mirror symmetry and orientation reversing.
  The isomorphisms of Section~\ref{sec:about-2-morphisms} coming from bi-adjoint pair  the induction/restriction can conveniently be phrased using $\rep$ and $\dagger$:
  \begin{align*}
&    2\HOM_{\tSSoe}\left(\rep[\web_1\circ\web_0], \rep[\web_3 \circ \web_2] \right) \,\, \simeq \\
&\qquad\qquad\qquad
q^{\shift(\pos[p^0_0]) + \shift(\pos[p^0_2]) -\shift(\pos[p^0_1]) -\shift(\pos[p^1_1]) }2\HOM_{\tSSoe}\left(\rep[\web_0\circ\web_2^\dagger], \rep[\web_1^\dagger \circ \web_3] \right).
  \end{align*}
where $\web_0\colon \pos[p_0^0] \to \pos[_1^0]$, $\Gamma_1\colon \pos[p_1^0] \to \pos[p_2^0]$, $\Gamma_2\colon \pos[p_0^0] \to \pos[p_1^1],$ and  $\Gamma_3\colon \pos[p_1^1] \to \pos[p_0^2]$.
On the foamy side, with the same notation, one also has isomorphisms:
    \[2\HOM_{\blFoam[\myk]}(\web_1\circ\web_0, {\web_3 \circ \web_2} ) \simeq
q^{\shift(\pos[p^0_0]) + \shift(\pos[p^0_2]) -\shift(\pos[p^0_1]) -\shift(\pos[p^1_1]) } 2\HOM_{\blFoam[\myk]}({\web_0\circ\web_2^\dagger}, {\web_1^\dagger \circ \web_3} ).
  \]
  which comes from the following deformation of bl-foams:
  \[
    \NB{\tikz[]{\input{\imagesfolder/hhf_defoamation}}}
  \]
  From the foamy description of Soergel bimodules given in Lemma~\ref{lem:foamy-soergel-1}, one deduces that for any compatible bl-webs $\web_0, \web_1, \web_2, \web_3$, the square
  \[
    \tikz[xscale=5, yscale =2]{
      \node (A) at (0,2) {$2\HOM_{\blFoam[\myk]}(\web_0\circ\web_1, {\web_2 \circ \web_3} )$};
      \node (B) at (1,1) {$2\HOM_{\tSSoe}\left(\rep[\web_0\circ\web_1], \rep[\web_2 \circ \web_3] \right)$};
      \node (D) at (0,0) {$q^{\shift(\web_3) - \shift(\web_0)} 2\HOM_{\blFoam[\myk]}({\web_1\circ\web_3^\dagger}, {\web_0^\dagger \circ \web_2} )$};
      \node (C) at (1,-1) {$q^{\shift(\web_3) - \shift(\web_0)} 2\HOM_{\tSSoe}\left(\rep[\web_1\circ\web_3^\dagger], \rep[\web_0^\dagger \circ \web_2] \right)$};
    \draw[->] (A) -- (B) node[midway, above] {\rep};
    \draw[->] (D) -- (C) node[midway, above] {\rep};
  \draw[->] (A) -- (D) node[midway, above, sloped] {deformation};
  \draw[->] (B) -- (C) node[midway, above, sloped] {biadjointness};}
  \]
commutes. Since the vertical arrows are isomorphisms, the top (slanted) horizontal one is an isomorphism if and only if the bottom horizontal one is.

Hence, the full faithfullness of $\rep$ is an annular question and can be seen through the Queffelec--Rose algorithm. Then, thanks to Proposition~\ref{prop:QR}, the only case to inspect is that when the annular webs are concentric circles. This is precisely what the lemma claims. 
\end{proof}

The main ingredient to conclude in the simpler case is a theorem by Beliakova--Habiro--Lauda--Webster \cite{MR3653092} about horizontal trace of 2-categories categorifying quantum groups which, when read in terms of foams, explains how certain foams can be simplified up to $\infty$-equivalence. 
To apply it, we consider braid-like foams in the cylinder that bound collections of concentric circles on each side, as follows.

Suppose that $F$ is a foam in a thickened annulus $A\times I$ which can be obtained as the trace of a bl-web $F'\colon \web \to \web$: it bounds collections  of concentric and  coherently oriented circles on top and bottom of the annulus. Moreover, it satisfies a condition of direction: generic cross sections of $F$ are bl-webs and, in particular, point upwards. Foams satisfying the direction condition are called \emph{vinyl} foams.

\begin{prop}[{\cite[Theorem 3.2 (second part)]{queffelec2018annular}}]
  \label{prop:trace-like}
  Suppose that foam $F$ is the trace of a bl-web $F'\colon \web \to \web$. Then $F$ is $\infty$-equivalent to a $\QQ$-linear combinations of decorated version of foams with shape $\web \times \SS^1$.
\end{prop}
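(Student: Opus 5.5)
The plan is to transport the statement to the categorified quantum group side, where Beliakova--Habiro--Lauda--Webster \cite{MR3653092} computed horizontal traces. Afterwards we pull the answer back to foams modulo $\infty$-equivalence. Concretely, we first isotope $\web$ into a ladder-like (well-presented) form, which Remark~(5) on bl-webs allows. Then $\web$ and every generic cross-section of $F$ become ladders with $m$ uprights for some $m$, i.e.\ images of 1-morphisms $\mathcal{E}_{\underline{i}}1_\lambda$ of the categorified quantum group $\dot{\mathcal U}(\mathfrak{gl}_m)$ (Schur quotient at level $\myk$). Next we use the foamation 2-functor $\Phi$ of \cite{queffelec2014mathfrak, QRAnnular}. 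Its target should be $\blFoam$, i.e.\ foams up to $\infty$-equivalence. For this we must check that the defining KLR/Khovanov--Lauda relations (nilHecke relations, dot slides, bubble relations, the $\mathfrak{sl}_2$ decomposition relations) hold for foams up to $\infty$-equivalence. This is where Lemmas~\ref{lem:foam-dot-migration}, \ref{lem:bubble}, \ref{lem:diabolo} and the isomorphisms of Proposition~\ref{prop:iso-web} (all proved with the Robert--Wagner evaluation) enter.

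The second step is to show that the endomorphism $F'$ lies in the image of $\Phi$ up to $\infty$-equivalence. We would decompose $F'$ by a generic height function into elementary bl-foam moves between ladders: zips, unzips, digon creation and annihilation, associativity and square moves, and dotted identities. We then check that each such elementary piece is $\infty$-equivalent to the image of a generator: a dot, a crossing, a cup or a cap. Since $\Phi$ is a 2-functor, it induces a map $\hTr(\Phi)$ on horizontal traces, and the trace-like foam $F$ is then the image of the class $[F'']\in \hTr(\dot{\mathcal U})$ for some $F''$ with $\Phi(F'')\sim F'$.

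The third step is to apply \cite{MR3653092}. In $\hTr(\dot{\mathcal U}(\mathfrak{gl}_m))$, the trace class of any endomorphism of $\mathcal E_{\underline i}1_\lambda$ can be rewritten, using the trace relation (cyclic sliding) together with the KLR relations, as a linear combination of classes of identity 2-morphisms of 1-morphisms decorated by dots and bubbles. In particular, the trace of a crossing is rewritten through the nilHecke relation as a difference of dotted identities. Applying $\hTr(\Phi)$, a dotted identity of $\mathcal{E}_{\underline j}1_\lambda$ goes to a decorated foam $\web'\times\SS^1$. Here $\web'$ is the ladder obtained from the closure, and it is isotopic in the annulus to the closure of $\web$, or differs from it only by the isomorphisms of Proposition~\ref{prop:iso-web}. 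Absorbing those isomorphisms by the trace relation and by dot migration (Lemma~\ref{lem:foam-dot-migration}) leaves decorated copies of $\web\times \SS^1$. Bubbles become decorations on facets by Lemma~\ref{lem:bubble}.

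The main obstacle is the second step: making sure that $\Phi$ lands in foams modulo $\infty$-equivalence and that every piece of $F'$ is hit. I would first try to cite \cite{RW2}, where the ladder/foamation functor is shown compatible with $\infty$-equivalence, and then handle generation by the local decomposition of $F'$ into elementary moves described above. If only a subset of $\infty$-equivalence classes is reached, it would suffice to show that this subset is closed under the trace relation and contains $F'$.
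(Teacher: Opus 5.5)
Your route is the one the paper relies on. The paper quotes the proposition from \cite{queffelec2018annular} and describes it as the Beliakova--Habiro--Lauda--Webster theorem \cite{MR3653092} read through foamation, and your first three steps reconstruct exactly that. The fullness of foamation modulo $\infty$-equivalence, which you flag as the main obstacle, is what the citation is meant to supply; the paper does not reprove it either.

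The step that fails is the end of your third paragraph, where you bring the ladders $\web'$ back to $\web$. BHLW's spanning set consists of dotted identities of \emph{other} 1-morphisms $\mathcal E_{\underline j}1_\lambda$. The rewriting uses relations like $[E_it^a,F_it^b]=H_it^{a+b}$, and, more basically, the fact that the trace class of an endomorphism of a decomposable 1-morphism is the sum of trace classes on its summands. The resulting webs $\web'\colon\pos'\to\pos$ live in the radial cross-section with both endpoints fixed, so no annular isotopy relates them to $\web$. The isomorphisms of Proposition~\ref{prop:iso-web}, combined with the trace relation, only take you from $\web$ to its summands, never back.

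In fact the conclusion with the same $\web$ is false. Take $\pos'=\pos=(2,1)$ and let $S$ be the square web: a rung of thickness $1$ to the right, then one back. By Proposition~\ref{prop:iso-web}, $S\cong\id_{(2,1)}\oplus H$ with both summands unshifted, where $H$ is the web through $(3)$.
\begin{itemize}
\item Let $\pi\colon S\to\id_{(2,1)}$ and $\iota\colon\id_{(2,1)}\to S$ be the maps of this splitting. Sliding $\pi$ around the annulus, the closure of $F'=\iota\circ\pi$ equals the closure of $\pi\circ\iota$, namely the undecorated concentric cylinders $Z$.
\item $\infty$-equivalence is homogeneous and $Z$ has degree zero. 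So the only candidate combination of decorated $S\times\SS^1$ is $\alpha\,(S\times\SS^1)\sim\alpha\,(Z+H\times\SS^1)$.
\item Cap the inner circles with decorated disks and reduce with Lemmas~\ref{lem:bubble} and~\ref{lem:diabolo}; the resulting decorated disks are detected by $\rep$ in $\mathbb A_{(2,1)}$.
\item With undecorated caps, $Z\mapsto 1$ and $H\times\SS^1\mapsto 0$ (the bubble decoration would have negative degree). This forces $\alpha=1$, hence $H\times\SS^1\sim 0$.
\item With $x_3^2$ on the thickness-one cap, however, $H\times\SS^1\mapsto\pm(x_1-x_3)(x_2-x_3)\neq 0$, a contradiction.
\end{itemize}

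So what your argument, and the cited theorem, actually give is a combination of decorated $\web'\times\SS^1$ over various bl-webs $\web'\colon\pos'\to\pos$, and you should stop there. This is also exactly how the paper uses the proposition in the proof of Lemma~\ref{lem:main} (``for various bl-webs $\web\colon\pos'\to\pos$''), so the statement has to be read with $\web$ varying.
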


Another way to put it, is to say that, if the boundary of a vinyl foam has an $\SS^1$-symmetry, then, up to $\infty$-equivalence, it is $\SS^1$-symmetric.

\begin{proof}[Proof of Lemma~\ref{lem:main} in the simpler case.]
  Let us suppose that $\web_0 = \web_1 = \id_{\pos}$ and denote by $\comp=(k_1, \dots, k_\ell)$ the composition $\pi(\pos)$. We aim to prove that $\rep$ induces an isomorphism from $2\HOM_{\blFoam[\myk]}(\web_0, \web_0)$ to $2\HOM_{\tSSoe}(\rep[\web_0], \rep[\web_0])$.
  The former consists of $\QQ$-linear combination of bl-foams bounding $\SS_{\comp}$ (concentric circles with thicknesses given by $\comp$) up to $\infty$-equi\-va\-lence. The latter is the endomorphism ring of $\algebra[\comp]$ considered as a bimodule over itself and is isomorphic to $\algebra[\comp]$.

  The map induced by $\rep$ is surjective: indeed, if $P=P_1\otimes \dots \otimes P_\ell$ is a factorizable element of $\algebra[\comp]$, the bimodule endomorphism corresponding to $P$ is equal to $\rep[\DD_P]$, where $\DD_P$ is the collection of disks of thicknesses $k_1, \dots, k_\ell$ and decorated by $P_1, \dots, P_\ell$. 
  
  The non-trivial part is injectivity. It is enough to prove that any bl-foam bounding $\SS_{\comp}$ is $\infty$-equivalent to a $\QQ$-linear combination of decorated disks. Let $\foam$ be such a bl-foam. Recall that $\foam$ is in the standard cube $C$ (see Figure~\ref{fig:cube}). It is a bl-foam  transverse to the $x_2$-direction . Let $U$ be a thickened segment in the $x_2$ direction, so that $U\cap \foam$ is a disjoint union of (small) disks (of various thicknesses and placement which are parametrized by, say, $\pos'$). Make $U$ an even smaller thickening, to make sure that $\foam\setminus U$ is a foam denoted by $\mathring{\foam}$.

  The foam $\mathring{\foam}$ is embedded in $C\setminus U$, which is a thickened annulus, and satisfies the direction properties which makes it fall into the hypothesis of Proposition~\ref{prop:trace-like}. Hence, it is equivalent to a $\QQ$-linear combination of a decorated version of foams $\web \times \SS^1$ for various bl-webs $\web\colon \pos' \to \pos$.

  Remember that $\foam$ is recovered from $\mathring{\foam}$ by capping it with disks along $\SS_{\pos}$. Hence it is enough to prove that if $G$ is a decorated, capped-off (on one side) foam $\web \times \SS^1$ for $\web\colon \pos' \to \pos$ a bl-web, then $G$ is $\infty$-equivalent to a $\QQ$-linear combination of disjoint union of decorated disks. We argue by induction on the number of vertices of $\web$. If $\web$ has no vertex, then $G$ is already a disjoint union of decorated disks. Otherwise, consider a vertex $v$ of $\web$ as close as possible to $\pos'$. It is either a merge or a split.

  Suppose that $v$ is a merge vertex. It means that $G$ contains a bubble. This bubble can be removed up to $\infty$-equivalence.  This is Lemma~\ref{lem:bubble},   illustrated below. 
  \[
\NB{\tikz[]{\input{\imagesfolder/hhf_bubble-1}}}
\]
The resulting foam  $G'$ can be obtained by capping off a decorated version of $\web'\times \SS^1$ with $\web'$ with one less vertex that $\web$, so that we can apply the induction.

  Suppose now that $v$ is a split vertex. It means that $G$ contains a diabolo (a fringed disk or a two-sided dish). Thanks to the dot migration, we can assume that decorations of the disks is trivial. This diabolo can be removed, up to $\infty$-equivalence. This is Lemma~\ref{lem:diabolo}, illustrated below.
    \[
\NB{\tikz[]{\input{\imagesfolder/hhf_2-sided-dish-1}}}
\]
The resulting linear combination consists of foams $G'$ that can be obtained by capping off decorated versions of $\web'\times \SS^1$, where $\web'$ has one less vertex that $\web$. We can now conclude the proof by induction.\end{proof}

\bibliographystyle{alphaurl}
\bibliography{biblio}

\end{document}

%% file: digon.tex
\newcommand{\digona}{\ensuremath{\vcenter{\hbox{\tikz[scale=0.4]{
\coordinate (B) at (0,0);
\coordinate (V1) at (0,0.5);
\coordinate (V2) at (0,2.5);
\coordinate (T) at (0,3);
\draw[white] (0, -0.5) -- (0, 3.5);
\draw[->] (B) -- (V1) node[pos=0, below] {\tiny{$a+b$}};
\draw[->] (V2) -- (T) node[pos=1, above] {\tiny{$a+b$}};
\draw[->-] (V1) .. controls +(+0.5, +0.5) and +(+0.5, -0.5).. (V2) node[midway, right] {\tiny{$b$}};
\draw[->-] (V1) .. controls +(-0.5, +0.5) and +(-0.5, -0.5).. (V2) node[midway, left] {\tiny{$a$}};
}}}}}

\newcommand{\verta}{\ensuremath{\vcenter{\hbox{\tikz[scale=0.4]{
\coordinate (B) at (0,0);
\coordinate (T) at (0,3);
\draw[white] (0, -0.5) -- (0, 3.5);
\draw[->] (B) -- (T) node[pos=0, below] {\tiny{$a+b$}};
}}}}}

%% file: assoc.tex
\newcommand{\stgamma}{\ensuremath{\vcenter{\hbox{\tikz[scale=0.3]{
\coordinate (B) at (0,0);
\coordinate (V1) at (0,1);
\coordinate (V2) at (1,2);
\coordinate (T1) at (-2,3);
\coordinate (T2) at (0,3);
\coordinate (T3) at (2,3);
\draw[>-] (B) -- (V1) node [at start, below] {\tiny{$a+b+c$}};
\draw[->] (V1) -- (T1) node [at end, above] {\tiny{$a$}};
\draw[->-] (V1)  -- (V2) node[midway, right] {\tiny{$b+c$}};
\draw[->] (V2) -- (T2) node[at end, above] {\tiny{$b$}};
\draw[->] (V2) -- (T3) node[at end, above] {\tiny{$c$}};
}}}}}

\newcommand{\stgammaprime}{\ensuremath{\vcenter{\hbox{\tikz[scale=0.3]{
\coordinate (B) at (0,0);
\coordinate (V1) at (0,1);
\coordinate (V2) at (-1,2);
\coordinate (T1) at (-2,3);
\coordinate (T2) at (0,3);
\coordinate (T3) at (2,3);
\draw[>-] (B) -- (V1) node [at start, below] {\tiny{$a+b+c$}};
\draw[->] (V1) -- (T3) node [at end, above] {\tiny{$c$}};
\draw[->-] (V1)  -- (V2) node[midway, left] {\tiny{$a+b$}};
\draw[->] (V2) -- (T1) node[at end, above] {\tiny{$a$}};
\draw[->] (V2) -- (T2) node[at end, above] {\tiny{$b$}};
}}}}}

\newcommand{\stgammar}{\ensuremath{\vcenter{\hbox{\tikz[scale=0.3, yscale = -1]{
\coordinate (B) at (0,0);
\coordinate (V1) at (0,1);
\coordinate (V2) at (1,2);
\coordinate (T1) at (-2,3);
\coordinate (T2) at (0,3);
\coordinate (T3) at (2,3);
\draw[<-] (B) -- (V1) node [at start, above] {\tiny{$a+b+c$}};
\draw[-<] (V1) -- (T1) node [at end, below] {\tiny{$a$}};
\draw[-<] (V1)  -- (V2) node[midway, right] {\tiny{$b+c$}};
\draw[-<] (V2) -- (T2) node[at end, below] {\tiny{$b$}};
\draw[-<] (V2) -- (T3) node[at end, below] {\tiny{$c$}};
}}}}}

\newcommand{\stgammaprimer}{\ensuremath{\vcenter{\hbox{\tikz[scale=0.3, yscale = -1]{
\coordinate (B) at (0,0);
\coordinate (V1) at (0,1);
\coordinate (V2) at (-1,2);
\coordinate (T1) at (-2,3);
\coordinate (T2) at (0,3);
\coordinate (T3) at (2,3);
\draw[<-] (B) -- (V1) node [at start, above] {\tiny{$a+b+c$}};
\draw[-<] (V1) -- (T3) node [at end, below] {\tiny{$c$}};
\draw[-<] (V1)  -- (V2) node[midway, left] {\tiny{$a+b$}};
\draw[-<] (V2) -- (T1) node[at end, below] {\tiny{$a$}};
\draw[-<] (V2) -- (T2) node[at end, below] {\tiny{$b$}};
}}}}}

%% file: square.tex
\newcommand{\easysquarec}{\ensuremath{\vcenter{\hbox{\tikz[xscale=0.65, yscale=0.55]{
\coordinate (B1) at (-1,0);
\coordinate (B2) at (1,0);
\coordinate (C1) at (-1,1.1);
\coordinate (D1) at (-1,1.9);
\coordinate (C2) at (1,0.9);
\coordinate (D2) at (1,2.1);
\coordinate (T1) at (-1,3);
\coordinate (T2) at (1,3);
\draw[>-] (B1) -- (C1) node[at start, below] {\tiny{$a$}};
\draw[->-] (C1) -- (D1) node[midway, left   ] {\tiny{$a+1$}};
\draw[->] (D1) -- (T1) node[at end , above ] {\tiny{$a$}};
\draw[>-] (B2) -- (C2) node[at start, below] {\tiny{$b$}};
\draw[->-] (C2) -- (D2) node[midway, right] {\tiny{$b-1$}};
\draw[->] (D2) -- (T2) node[at end, above] {\tiny{$b$}};
\draw[->-] (D1) -- (D2) node[midway, above] {\tiny{$1$}};
\draw[->-] (C2) -- (C1) node[midway, below] {\tiny{$1$}};
}}}}}

\newcommand{\easyIIupup}{\ensuremath{\vcenter{\hbox{\tikz[xscale=0.65, yscale=0.55]{
\coordinate (B1) at (-1,0);
\coordinate (B2) at (1,0);
\coordinate (C1) at (-1,1.1);
\coordinate (D1) at (-1,1.9);
\coordinate (C2) at (1,0.9);
\coordinate (D2) at (1,2.1);
\coordinate (T1) at (-1,3);
\coordinate (T2) at (1,3);
\draw[->] (B1) -- (T1) node[at start, below] {\tiny{$a$}};
\draw[->] (B2) -- (T2) node[at start, below] {\tiny{$b$}};
}}}}}

\newcommand{\easysquared}{\ensuremath{\vcenter{\hbox{\tikz[yscale=0.55, xscale=0.65]{
\coordinate (B1) at (-1,0);
\coordinate (B2) at (1,0);
\coordinate (C1) at (-1,0.9);
\coordinate (D1) at (-1,2.1);
\coordinate (C2) at (1,1.1);
\coordinate (D2) at (1,1.9);
\coordinate (T1) at (-1,3);
\coordinate (T2) at (1,3);
\draw[>-] (B1) -- (C1) node[at start, below] {\tiny{$a$}};
\draw[->-] (C1) -- (D1) node[midway, left   ] {\tiny{$a-1$}};
\draw[->] (D1) -- (T1) node[at end , above ] {\tiny{$a$}};
\draw[>-] (B2) -- (C2) node[at start, below] {\tiny{$b$}};
\draw[->-] (C2) -- (D2) node[midway, right] {\tiny{$b+1$}};
\draw[->] (D2) -- (T2) node[at end, above] {\tiny{$b$}};
\draw[->-] (D2) -- (D1) node[midway, above] {\tiny{$1$}};
\draw[->-] (C1) -- (C2) node[midway, below] {\tiny{$1$}};
}}}}}



%% file: hhf_sample0.tex
\begin{scope}[scale=2]
  \tdplotsetmaincoords{0}{0}
  \begin{scope}[tdplot_main_coords]
      \draw[>->] (0.7, 0, 1)
      .. controls +(0, +0.2, 0) and  +(0, -0.2, 0) .. (0.5, 0.25, 1)
      .. controls +(0, +0.1, 0) and  +(0, -0.1, 0) .. (0.5, 0.4, 1)
      .. controls +(0, +0.1, 0) and  +(0, -0.1, 0) .. (0.7, 0.6, 1) coordinate (q) node {$\bullet$}
      .. controls +(0, +0.1, 0) and  +(0, -0.1, 0) .. (0.5, 0.8, 1) -- (0.5,1,1);
      \draw (0.5, 0.4, 1)
      .. controls +(0, +0.1, 0) and  +(0, -0.1, 0) .. (0.3, 0.6, 1) coordinate (p) node {$\bullet$}
      .. controls +(0, +0.1, 0) and  +(0, -0.1, 0) .. (0.5, 0.8, 1);
      \draw[>-] (0.3, 0, 1)
      .. controls +(0, +0.2, 0) and  +(0, -0.2, 0) .. (0.5, 0.25, 1);
      \node[left] at (p) {$P$};
      \node[right] at (q) {$Q$};

  \end{scope}
\end{scope}


%% file: hhf_sample.tex
\begin{scope}[scale=2]
  \tdplotsetmaincoords{80}{70}
  \begin{scope}[tdplot_main_coords]
      \draw[<-, very thin, blue] (0.3, 0.7, 0.95) .. controls +(-0.5, 0, 0) and +(0,0, -0.3) .. (-1, 0.6, 1.1) node[above] {$P$};
    \coordinate (OOO) at (0, 0, 0);
    \coordinate (IOO) at (1, 0, 0);
    \coordinate (OIO) at (0, 1, 0);
    \coordinate (IIO) at (1, 1, 0);
    \coordinate (OOI) at (0, 0, 1);
    \coordinate (IOI) at (1, 0, 1);
    \coordinate (OII) at (0, 1, 1);
    \coordinate (III) at (1, 1, 1);
    \coordinate (sf) at (+3.5,  0.5,  0.5);
    \coordinate (sh) at (-2.5,  0.5,  0.5);
    \coordinate (sr) at ( 0.5, +2.25,  0.5);
    \coordinate (sl) at ( 0.5, -1.25,  0.5);
    \coordinate (st) at ( 0.5,  0.5, +2.25);
    \coordinate (sb) at ( 0.5,  0.5, -1.25);
    \begin{scope}[very thin]
      \draw (OOO) -- (OOI);
      \draw [densely dotted] (OOO) -- (OIO);
      \draw (OOO) -- (IOO);
      \draw (III) -- (OII);
      \draw (III) -- (IOI);
      \draw (III) -- (IIO);
      \draw (OOI) -- (OII);
      \draw (OOI) -- (IOI);
      \draw [densely dotted] (OIO) -- (OII);
      \draw [densely dotted] (OIO) -- (IIO);
      \draw (IOO) -- (IOI);
      \draw (IOO) -- (IIO);
    \end{scope}
    \begin{scope}
      \draw (0.5, 0, 0) .. controls +(0, 0, 0.2) and +(0, 0, -0.2) .. (0.5, 0, 0.5)
      .. controls +(0, 0, 0.2) and +(0, 0, -0.2) .. (0.7, 0, 1);
      \draw (0.5, 0, 0.5)
      .. controls +(0, 0, 0.2) and +(0, 0, -0.2) .. (0.3, 0, 1);
      \draw (0.7, 0, 1)
      .. controls +(0, +0.2, 0) and  +(0, -0.2, 0) .. (0.5, 0.25, 1)
      .. controls +(0, +0.1, 0) and  +(0, -0.1, 0) .. (0.5, 0.4, 1)
      .. controls +(0, +0.1, 0) and  +(0, -0.1, 0) .. (0.7, 0.6, 1) coordinate (p)
      .. controls +(0, +0.1, 0) and  +(0, -0.1, 0) .. (0.5, 0.8, 1) -- (0.5,1,1);
      
      \draw[<-, very thin, blue] (0.7, 0.5, 0.9) .. controls +(0.5, 0, 0) and +(0,0, -0.3) .. (1, 0.5, 1.5) node[above] {$Q$};
      \draw (0.5, 0.4, 1)
      .. controls +(0, +0.1, 0) and  +(0, -0.1, 0) .. (0.3, 0.6, 1)
      .. controls +(0, +0.1, 0) and  +(0, -0.1, 0) .. (0.5, 0.8, 1);
      \draw (0.3, 0, 1)
      .. controls +(0, +0.2, 0) and  +(0, -0.2, 0) .. (0.5, 0.25, 1);

      \draw[very thick] (0.5,0,0.5) .. controls +(0,0.2, 0) and +(0, 0, -0.3) .. (0.5, 0.25,1);
      \draw[very thick] (0.5, 0.4, 1)
      .. controls +(0, 0, -0.30) and  +(0, -0.2, 0) .. (0.5, 0.6, 0.6)
      .. controls +(0, 0.2, 0)   and  +(0, 0, -0.3) .. (0.5, 0.8, 1); 

      \begin{scope}[gray]
        \draw (0.5, 0, 0) -- (0.5, 1, 0);
        
        \draw (0.5,1,0) -- (0.5,1,1);
     \end{scope}
     
    \end{scope}

  \end{scope}
\end{scope}


%% file: hhf_sample3.tex
\begin{scope}[scale=2]
  \tdplotsetmaincoords{80}{70}
  \begin{scope}[tdplot_main_coords]
          \draw[opacity=0, <-, very thin, blue] (0.7, 0.5, 0.9) .. controls +(0.5, 0, 0) and +(0,0, -0.3) .. (1, 0.5, 1.5) node[above] {$Q$};
          
    \coordinate (OOO) at (0, 0, 0);
    \coordinate (IOO) at (1, 0, 0);
    \coordinate (OIO) at (0, 1, 0);
    \coordinate (IIO) at (1, 1, 0);
    \coordinate (OOI) at (0, 0, 1);
    \coordinate (IOI) at (1, 0, 1);
    \coordinate (OII) at (0, 1, 1);
    \coordinate (III) at (1, 1, 1);
    \coordinate (sf) at (+3.5,  0.5,  0.5);
    \coordinate (sh) at (-2.5,  0.5,  0.5);
    \coordinate (sr) at ( 0.5, +2.25,  0.5);
    \coordinate (sl) at ( 0.5, -1.25,  0.5);
    \coordinate (st) at ( 0.5,  0.5, +2.25);
    \coordinate (sb) at ( 0.5,  0.5, -1.25);
    \begin{scope}[very thin]
      \draw (OOO) -- (OOI);
      \draw [densely dotted] (OOO) -- (OIO);
      \draw (OOO) -- (IOO);
      \draw (III) -- (OII);
      \draw (III) -- (IOI);
      \draw (III) -- (IIO);
      \draw (OOI) -- (OII);
      \draw (OOI) -- (IOI);
      \draw [densely dotted] (OIO) -- (OII);
      \draw [densely dotted] (OIO) -- (IIO);
      \draw (IOO) -- (IOI);
      \draw (IOO) -- (IIO);
    \end{scope}
    \begin{scope}
      \draw (0.7,0,0) -- (0.7, 0, 1);
      \draw (0.3,0,0) -- (0.3, 0, 1);
      \draw (0.7, 0, 1)
      .. controls +(0, +0.2, 0) and  +(0, -0.2, 0) .. (0.5, 0.25, 1)
      .. controls +(0, +0.1, 0) and  +(0, -0.1, 0) .. (0.5, 0.4, 1)
      .. controls +(0, +0.1, 0) and  +(0, -0.1, 0) .. (0.5, 0.8, 1) -- (0.5,1,1);
      \draw (0.3, 0, 1)
      .. controls +(0, +0.2, 0) and  +(0, -0.2, 0) .. (0.5, 0.25, 1);

      \draw[very thick] (0.5,0.25,0) -- (0.5, 0.25,1);


      \newcommand{\myop}{0}





      \begin{scope}[gray]

      \draw (0.7, 0, 0)
      .. controls +(0, +0.2, 0) and  +(0, -0.2, 0) .. (0.5, 0.25, 0)
      .. controls +(0, +0.1, 0) and  +(0, -0.1, 0) .. (0.5, 0.4, 0)
      .. controls +(0, +0.1, 0) and  +(0, -0.1, 0) .. (0.7, 0.6, 0)
      .. controls +(0, +0.1, 0) and  +(0, -0.1, 0) .. (0.5, 0.8, 0) -- (0.5,1,0);
      \draw (0.5, 0.4, 0)
      .. controls +(0, +0.1, 0) and  +(0, -0.1, 0) .. (0.3, 0.6, 0)
      .. controls +(0, +0.1, 0) and  +(0, -0.1, 0) .. (0.5, 0.8, 0);
      \draw (0.3, 0, 0)
      .. controls +(0, +0.2, 0) and  +(0, -0.2, 0) .. (0.5, 0.25, 0);

      \draw[black, very thick] (0.5, 0.4, 0)
      .. controls +(0, 0, 0.30) and  +(0, -0.2, 0) .. (0.5, 0.6, 0.4)
      .. controls +(0, 0.2, 0)   and  +(0, 0, 0.3) .. (0.5, 0.8, 0);

        \draw (0.5,1,0) -- (0.5,1,1);
     \end{scope}
     
    \end{scope}

  \end{scope}
\end{scope}


%% file: hhf_sample5.tex
\begin{scope}[scale=2]
  \tdplotsetmaincoords{80}{70}
  \begin{scope}[tdplot_main_coords]
      \draw[<-, very thin, blue] (0.3, 0.7, 0.5) .. controls +(-0.5, 0, 0) and +(0,0, -0.3) .. (-1, 0.6, 1.1) node[above] {$P$};

    \coordinate (OOO) at (0, 0, 0);
    \coordinate (IOO) at (1, 0, 0);
    \coordinate (OIO) at (0, 1, 0);
    \coordinate (IIO) at (1, 1, 0);
    \coordinate (OOI) at (0, 0, 1);
    \coordinate (IOI) at (1, 0, 1);
    \coordinate (OII) at (0, 1, 1);
    \coordinate (III) at (1, 1, 1);
    \coordinate (sf) at (+3.5,  0.5,  0.5);
    \coordinate (sh) at (-2.5,  0.5,  0.5);
    \coordinate (sr) at ( 0.5, +2.25,  0.5);
    \coordinate (sl) at ( 0.5, -1.25,  0.5);
    \coordinate (st) at ( 0.5,  0.5, +2.25);
    \coordinate (sb) at ( 0.5,  0.5, -1.25);
    \begin{scope}[very thin]
      \draw (OOO) -- (OOI);
      \draw [densely dotted] (OOO) -- (OIO);
      \draw (OOO) -- (IOO);
      \draw (III) -- (OII);
      \draw (III) -- (IOI);
      \draw (III) -- (IIO);
      \draw (OOI) -- (OII);
      \draw (OOI) -- (IOI);
      \draw [densely dotted] (OIO) -- (OII);
      \draw [densely dotted] (OIO) -- (IIO);
      \draw (IOO) -- (IOI);
      \draw (IOO) -- (IIO);
    \end{scope}
    \begin{scope}
      \draw (0.5, 0, 0) .. controls +(0, 0, 0.2) and +(0, 0, -0.2) .. (0.5, 0, 0.5)
      .. controls +(0, 0, 0.2) and +(0, 0, -0.2) .. (0.7, 0, 1);
      \draw (0.5, 0, 0.5)
      .. controls +(0, 0, 0.2) and +(0, 0, -0.2) .. (0.3, 0, 1);
      \draw (0.7, 0, 1)
      .. controls +(0, +0.2, 0) and  +(0, -0.2, 0) .. (0.5, 0.25, 1)
      .. controls +(0, +0.1, 0) and  +(0, -0.1, 0) .. (0.5, 0.4, 1)
      .. controls +(0, +0.1, 0) and  +(0, -0.1, 0) .. (0.5, 0.8, 1) -- (0.5,1,1);
      \draw (0.3, 0, 1)
      .. controls +(0, +0.2, 0) and  +(0, -0.2, 0) .. (0.5, 0.25, 1);

      \draw[very thick] (0.5,0,0.5) .. controls +(0,0.2, 0) and +(0, 0, -0.3) .. (0.5, 0.25,1);
      \draw[very thick] (0.5, 0.4, 0.5)
      .. controls +(0, 0, -0.15) and  +(0, -0.2, 0) .. (0.5, 0.6, 0.3)
      .. controls +(0, 0.2, 0)   and  +(0, 0, -0.15) .. (0.5, 0.8, 0.5)
      .. controls +(0, 0, 0.15) and  +(0, 0.2, 0) .. (0.5, 0.6, 0.7)
      .. controls +(0, -0.2, 0)   and  +(0, 0, 0.15) .. cycle;
      \draw[very thin, gray] (0.5, 0.4, 0.5)
      .. controls +(0, +0.1, 0) and  +(0, -0.1, 0) .. (0.7, 0.6, 0.5)
      .. controls +(0, +0.1, 0) and  +(0, -0.1, 0) .. (0.5, 0.8, 0.5)
      .. controls +(0, -0.1, 0) and  +(0, +0.1, 0) .. (0.3, 0.6, 0.5)
      .. controls +(0, -0.1, 0) and  +(0, +0.1, 0) .. cycle;

            \draw[<-, very thin, blue] (0.7, 0.5, 0.5) .. controls +(0.5, 0, 0) and +(0,0, -0.3) .. (1, 0.5, 1.5) node[above] {$Q$};

      \begin{scope}[gray]
        \draw (0.5, 0, 0) -- (0.5, 1, 0);
        
        \draw (0.5,1,0) -- (0.5,1,1);
     \end{scope}
     
    \end{scope}

  \end{scope}
\end{scope}


%% file: hhf_sample4.tex
\begin{scope}[scale=2]
  \tdplotsetmaincoords{80}{70}
  \begin{scope}[tdplot_main_coords]
    \coordinate (OOO) at (0, 0, 0);
    \coordinate (IOO) at (1, 0, 0);
    \coordinate (OIO) at (0, 1, 0);
    \coordinate (IIO) at (1, 1, 0);
    \coordinate (OOI) at (0, 0, 1);
    \coordinate (IOI) at (1, 0, 1);
    \coordinate (OII) at (0, 1, 1);
    \coordinate (III) at (1, 1, 1);
    \coordinate (sf) at (+3.5,  0.5,  0.5);
    \coordinate (sh) at (-2.5,  0.5,  0.5);
    \coordinate (sr) at ( 0.5, +2.25,  0.5);
    \coordinate (sl) at ( 0.5, -1.25,  0.5);
    \coordinate (st) at ( 0.5,  0.5, +2.25);
    \coordinate (sb) at ( 0.5,  0.5, -1.25);
    \begin{scope}[very thin]
      \draw (OOO) -- (OOI);
      \draw [densely dotted] (OOO) -- (OIO);
      \draw (OOO) -- (IOO);
      \draw (III) -- (OII);
      \draw (III) -- (IOI);
      \draw (III) -- (IIO);
      \draw (OOI) -- (OII);
      \draw (OOI) -- (IOI);
      \draw [densely dotted] (OIO) -- (OII);
      \draw [densely dotted] (OIO) -- (IIO);
      \draw (IOO) -- (IOI);
      \draw (IOO) -- (IIO);
    \end{scope}
    \begin{scope}
      \draw (0.5, 0, 0) .. controls +(0, 0, 0.2) and +(0, 0, -0.2) .. (0.5, 0, 0.5)
      .. controls +(0, 0, 0.2) and +(0, 0, -0.2) .. (0.7, 0, 1);
      \draw (0.5, 0, 0.5)
      .. controls +(0, 0, 0.2) and +(0, 0, -0.2) .. (0.3, 0, 1);
      \draw (0.7, 0, 1)
      .. controls +(0, +0.2, 0) and  +(0, -0.2, 0) .. (0.5, 0.25, 1)
      .. controls +(0, +0.1, 0) and  +(0, -0.1, 0) .. (0.5, 0.4, 1)
      .. controls +(0, +0.1, 0) and  +(0, -0.1, 0) .. (0.5, 0.8, 1) -- (0.5,1,1);
      \draw (0.3, 0, 1)
      .. controls +(0, +0.2, 0) and  +(0, -0.2, 0) .. (0.5, 0.25, 1);

      \draw[very thick] (0.5,0,0.5) .. controls +(0,0.2, 0) and +(0, 0, -0.3) .. (0.5, 0.25,1);

            \draw[<-, very thin, blue] (0.7, 0.5, 0.5) .. controls +(0.5, 0, 0) and +(0,0, -0.3) .. (1, 0.5, 1.5) node[above] {$R$};
      \begin{scope}[gray]
        \draw (0.5, 0, 0) -- (0.5, 1, 0);
        
        \draw (0.5,1,0) -- (0.5,1,1);
     \end{scope}
     
    \end{scope}

  \end{scope}
\end{scope}


%% file: hhf_sample00.tex
\begin{scope}[scale=2]
  \tdplotsetmaincoords{0}{0}
  \begin{scope}[tdplot_main_coords]
      \draw[>->] (0.7, 0, 1)
      .. controls +(0, +0.2, 0) and  +(0, -0.2, 0) .. (0.5, 0.25, 1)
      .. controls +(0, +0.1, 0) and  +(0, -0.1, 0) .. (0.5, 0.4, 1)
      .. controls +(0, +0.1, 0) and  +(0, -0.1, 0) .. (0.5, 0.6, 1) coordinate (q) node {$\bullet$}
      .. controls +(0, +0.1, 0) and  +(0, -0.1, 0) .. (0.5, 0.8, 1) -- (0.5,1,1);
      \draw[>-] (0.3, 0, 1)
      .. controls +(0, +0.2, 0) and  +(0, -0.2, 0) .. (0.5, 0.25, 1);
      \node[right] at (q) {$R$};

  \end{scope}
\end{scope}


%% file: sym_MOYvertex.tex
\begin{scope}
\draw[->] (0,0) -- (0,0.5) node [at end, above] {$a+b$};  
\draw[>-] (-0.5, -0.5) .. controls +(0,0) and +(0, -0.2) .. (0,0) node [at start, below] {$a$};  
\draw[>-] (+0.5, -0.5) .. controls +(0,0) and +(0, -0.2) .. (0,0) node [at start, below] {$b$};  
\begin{scope}[xshift = 5cm]
  \draw[-<] (0,0) -- (0,0.5) node [at end, above] {$a+b$};  
\draw[<-] (-0.5, -0.5) .. controls +(0,0) and +(0, -0.2) .. (0,0) node [at start, below] {$a$};  
\draw[<-] (+0.5, -0.5) .. controls +(0,0) and +(0, -0.2) .. (0,0) node [at start, below] {$b$};  
\end{scope}
\end{scope}

%% file: hhf_bl-web-example.tex
\begin{scope}[font= \tiny, scale= 1.8]
  \draw[very thin] (0,0) rectangle (2,2);
  \draw[->-] (0.3, 0) -- +(0, 0.1) node[pos=0, below] {$4$};
  \draw[->-] (1, 0) -- +(0, 0.1) node[pos=0, below] {$5$};
  \draw[->-] (1.7, 0) -- +(0, 0.1) node[pos=0, below] {$2$};
  \draw[->] (0.25, 1.9) -- +(0, 0.1) node[pos=1, above] {$2$};
  \draw[->] (0.75, 1.9) -- +(0, 0.1) node[pos=1, above] {$4$};
  \draw[->] (1.25, 1.9) -- +(0, 0.1) node[pos=1, above] {$2$};
  \draw[->] (1.75, 1.9) -- +(0, 0.1) node[pos=1, above] {$3$};

  \draw (0.3, 0.1)  coordinate (a1) .. controls + (0,0.2) and +(0, -0.2) ..
  (0.65, 0.6) coordinate (b2) node [below, pos=0.5] {$2$} .. controls + (0,0.2) and +(0, -0.2) ..
  (0.4, 1.1) coordinate (c1) node [above, pos=0.3] {$3$}.. controls + (0,0.2) and +(0, -0.2) ..
  (0.35, 1.5) coordinate (d1) node [left, pos=0.5] {$5$} .. controls + (0,0.2) and +(0, -0.2) ..
  (0.25, 1.9) coordinate (e1); 
  \draw (a1) .. controls + (0,0.2) and +(0, -0.2) .. (c1) node [left, pos=0.5] {$2$};
  \draw
  (1, 0.1) coordinate (a2)  .. controls + (0,0.2) and +(0, -0.2) .. (b2) node [below, pos=0.5] {$1$};
  \draw (d1) .. controls + (0,0.2) and +(0, -0.2) .. (0.75, 1.9) coordinate (e2) node [above, pos=0.5] {$3$};
  \draw (a2) -- +(0, 0.6) coordinate[pos =0.7] (b3) node[right, pos=0.5] {$4$} .. controls + (0,0.2) and +(0, -0.2) .. (e2) node [right, pos=0.5] {$1$};
  \draw (b3) .. controls + (0,0.2) and +(0, -0.2) .. (1.4, 1) coordinate (c3) node[pos =0.5, above] {$3$} -- +(0,0.5) coordinate (d3) node [right, pos=0.5] {$5$} .. controls + (0,0.2) and +(0, -0.2) .. (1.25, 1.9);
  \draw (d3) .. controls + (0,0.2) and +(0, -0.2) .. (1.75, 1.9);
  \draw (1.7,0.1) .. controls + (0,0.2) and +(0, -0.2) .. (c3);
  

\end{scope}

%% file: cef_3localmodels.tex
\begin{scope}
\tdplotsetmaincoords{80}{140}
  \begin{scope}[tdplot_main_coords]
    \filldraw [very thin, fill=red, opacity = 0.2] (0,-1, -1) -- (0,1,-1) -- (0,1,1) -- (0,-1,1) -- (0,-1,-1);
    \node[sloped, red] at (0,0,0) {$a$};
  \end{scope}
\begin{scope}[xshift = 3cm, tdplot_main_coords]
  \begin{scope}
    \filldraw [very thin, fill =red, opacity = 0.2] (0,-1, -1) -- (0,1,-1) -- (0,1,0) -- (0,-1,0) -- (0,-1,-1);
        \node[sloped, red] at (0,0,-0.5) {$a+b$};
    \end{scope}
  \begin{scope}[rotate around y = 150]
    \filldraw [very thin, fill =blue, opacity = 0.2] (0,-1, -1) -- (0,1,-1) -- (0,1,0) -- (0,-1,0) -- (0,-1,-1);
        \node[sloped, blue] at (0,0,-0.5) {$a$};
  \end{scope}
  \begin{scope}[rotate around y =-130]
    \filldraw [very thin, fill =green, opacity = 0.2] (0,-1, -1) -- (0,1,-1) -- (0,1,0) -- (0,-1,0) -- (0,-1,-1);
        \node[sloped, green!50!black] at (0,0,-0.5) {$b$};
        \draw[very thick, ->] (0,1,0) -- (0,-1,0);
  \end{scope}
  \end{scope}
\begin{scope}[scale = 1.6, xshift = 4.5cm, tdplot_main_coords]
  \begin{scope}
    \filldraw [very thin, fill =red, opacity = 0.2] (0,-1, -1) -- (0,1,-1) -- (0,1,0) -- (0,0,0) -- (0,-1,-1);
    \coordinate (a) at (0, -1, -1);
        \node[sloped, red] at (0,0.2,-0.5) {$a+b+c$};
    \end{scope}
  \begin{scope}[rotate around y = 150]
    \filldraw [very thin, fill= blue, opacity = 0.2] (0,-1, -1) -- (0,1,-1) -- (0,1,0) -- (0,0,0) -- (0,-1,-1);
    \coordinate (b) at (0, -1, -1);
        \node[sloped, blue] at (0,0.5,-0.5) {$a+b$};
  \end{scope}
  \begin{scope}[rotate around y =-130]
    \filldraw [very thin, fill= green, opacity = 0.2] (0,-1, -1) -- (0,1,-1) -- (0,1,0) -- (0,0,0) -- (0,-1,-1);
    \coordinate (c) at (0, -1, -1);
        \node[sloped, green!50!black] at (0,0.5,-0.5) {$c$};
        \node[sloped, purple!50!black] at (0,1,-1.5) {$a$};
        \node[sloped, green!50!black] at (0,+0.5,-1.5) {$b$};
        \node[sloped, gray] at (0,-1.3,0.1) {$b+c$};
  \end{scope}
  \filldraw[very thin, fill= orange, opacity = 0.2] (a) -- (b) -- (0,0,0) -- (a);
  \filldraw[very thin, fill= gray, opacity = 0.2] (a) -- (c) -- (0,0,0) -- (a);
  \filldraw[very thin, fill= purple, opacity = 0.2] (b) -- (c) -- (0,0,0) -- (b);
  \draw[very thick, ->] (a) -- (0,0,0);
  \draw[very thick, <-] (b) -- (0,0,0);
  \draw[very thick, ->] (c) -- (0,0,0);
  \draw[very thick, <-] (0,1, 0) -- (0,0,0);
  \fill[green!50!black, opacity = 0.6] (0,0,0) circle (0.5mm);
\end{scope}
\end{scope}

%% file: sw_lhrule.tex
\begin{scope}
 \draw (0,0) -- +(0:1);
 \draw (0,0) -- +(120:1);
 \draw (0,0) -- +(240:1);
 \filldraw[fill= white, draw=black, very thin] (0,0) circle (0.15cm);
 \filldraw[fill = black] (0,0) circle (0.02cm);
 \draw[very thin,->] (-10:0.8cm) arc (-10:-110 :0.8); 
 \draw[very thin,->] (230:0.8cm) arc (230: 130:0.8); 
 \draw[very thin,->] (110:0.8cm) arc (110:10 :0.8); 
\end{scope}

%% file: sym_cube.tex
\tdplotsetmaincoords{80}{70}
\begin{scope}[tdplot_main_coords]
  \coordinate (OOO) at (0, 0, 0);
  \coordinate (IOO) at (1, 0, 0);
  \coordinate (OIO) at (0, 1, 0);
  \coordinate (IIO) at (1, 1, 0);
  \coordinate (OOI) at (0, 0, 1);
  \coordinate (IOI) at (1, 0, 1);
  \coordinate (OII) at (0, 1, 1);
  \coordinate (III) at (1, 1, 1);
  \coordinate (sf) at (+3.5,  0.5,  0.5);
  \coordinate (sh) at (-2.5,  0.5,  0.5);
  \coordinate (sr) at ( 0.5, +2.25,  0.5);
  \coordinate (sl) at ( 0.5, -1.25,  0.5);
  \coordinate (st) at ( 0.5,  0.5, +2.25);
  \coordinate (sb) at ( 0.5,  0.5, -1.25);
  \draw (OOO) -- (OOI);
  \draw [densely dotted] (OOO) -- (OIO);
  \draw (OOO) -- (IOO);
  \draw (III) -- (OII);
  \draw (III) -- (IOI);
  \draw (III) -- (IIO);
  \draw (OOI) -- (OII);
  \draw (OOI) -- (IOI);
  \draw [densely dotted] (OIO) -- (OII);
  \draw [densely dotted] (OIO) -- (IIO);
  \draw (IOO) -- (IOI);
  \draw (IOO) -- (IIO);
  \draw[<-] ($(OOO)!0.5!(OII)$) -- +(-2,0,0) ;
  \draw[<-] ($(OOO)!0.5!(IOI)$) -- +(0,-1,0) ;
  \draw[<-] ($(OOO)!0.5!(IIO)$) -- +(0,0,-1) ;
  \draw[<-] ($(III)!0.5!(IOO)$) -- +(+2,0,0) ;
  \draw[<-] ($(III)!0.5!(OIO)$) -- +(0,+1,0) ;
  \draw[<-] ($(III)!0.5!(OOI)$) -- +(0,0,+1) ;
  \node at (sf) {$s_f$};
  \node at (sh) {$s_h$};
  \node at (sr) {$s_r$};
  \node at (sl) {$s_l$};
  \node at (st) {$s_t$};
  \node at (sb) {$s_b$};
  \begin{scope}[xshift =2cm, yshift =1.5cm]
    \draw[->] (0,0,0) -- (-1,0,0);
    \node at (-1.5,0,0) {$x_2$};
    \draw[->] (0,0,0) -- (0,0.5,0);
    \node at (0,0.75,0) {$x_1$};
    \draw[->] (0,0,0) -- (0,0,0.5);
    \node at (0,0,.75) {$x_3$};
  \end{scope}

\end{scope}

%% file: hhf_dot-migration-foam.tex
\begin{scope}
  \begin{scope}
    \draw (0:0) -- +(90:1) coordinate[pos=0.5] (t) ;
    \draw (0:0) .. controls +(90:-0.3) and +(0,0) .. (-120:1) coordinate[pos=0.5] (a)  node[pos=1, below] {$a$};
    \draw (0:0) .. controls +(90:-0.3) and +(0,0) .. (-60:1) coordinate[pos=0.5] (b)  node[pos=1, below] {$b$};
    \begin{scope}[xshift= 1.5cm, yshift = 0.5cm ]
      \draw (0:0) -- +(90:1) coordinate[pos=0.5] (t);
      \draw (0:0) .. controls +(90:-0.3) and +(0,0) .. (-120:1) coordinate[pos=0.5] (a);
      \draw (0:0) .. controls +(90:-0.3) and +(0,0) .. (-60:1) coordinate[pos=0.5] (b);
  \end{scope}
  \draw [thick] (0:0) -- +(1.5, 0.5);
  \draw  (90:1) -- +(1.5, 0.5)  node [pos=0.5, above, sloped] {$a+b$};
  \draw (-60:1) -- +(1.5, 0.5);
  \draw (-120:1) -- +(1.5, 0.5);
  \coordinate[xshift = 0.5cm, yshift= 0.3cm] (a) at (-120:1){}; 
  \coordinate[xshift = 1cm, yshift= 0.6cm] (b) at (-60:1){}; 
  \coordinate (ab) at (0.75,0.75);
  \draw[<-, very thin] (ab) .. controls +(-0.3, -0.2) and +(0,0) .. +(-1, -0.3) node [left] {$P$}; 
\end{scope}
   \node at (3.7, 0.25) {$\displaystyle{= \quad\sum_{i}}$};

    \begin{scope} [xshift = 5.5cm]
    \draw (0:0) -- +(90:1) coordinate[pos=0.5] (t) ;
    \draw (0:0) .. controls +(90:-0.3) and +(0,0) .. (-120:1) coordinate[pos=0.5] (a)  node[pos=1, below] {$a$};
    \draw (0:0) .. controls +(90:-0.3) and +(0,0) .. (-60:1) coordinate[pos=0.5] (b)  node[pos=1, below] {$b$};
    \begin{scope}[xshift= 1.5cm, yshift = 0.5cm ]
      \draw (0:0) -- +(90:1) coordinate[pos=0.5] (t);
      \draw (0:0) .. controls +(90:-0.3) and +(0,0) .. (-120:1) coordinate[pos=0.5] (a);
      \draw (0:0) .. controls +(90:-0.3) and +(0,0) .. (-60:1) coordinate[pos=0.5] (b);
  \end{scope}
  \draw [thick] (0:0) -- +(1.5, 0.5);
  \draw  (90:1) -- +(1.5, 0.5)  node [pos=0.5, above, sloped] {$a+b$};
  \draw (-60:1) -- +(1.5, 0.5);
  \draw (-120:1) -- +(1.5, 0.5);
  \coordinate[xshift = 0.5cm, yshift= 0.3cm] (a) at (-120:1){}; 
  \coordinate[xshift = 1cm, yshift= 0.6cm] (b) at (-60:1){}; 
  \coordinate (ab) at (0.75,0.75);
  \draw[<-, very thin] (a) .. controls +(-0.2, 0.2) and +(0.2,0) .. +(-0.5, 0.5) node [left] {$Q_i^{(1)}$};
  \draw[<-, very thin] (b) .. controls +( 0.2, 0.2) and +(-0.2,0) .. +(0.5, 0.5) node [right] {$Q_i^{(2)}$}; 

\end{scope}

\end{scope}


%% file: hhf_bubble-2.tex
\begin{scope}[scale=0.7]
\begin{scope}
  \draw (1, 0.5) .. controls +(1, -1) and +(-1, -1) .. (4, 0.5) coordinate[pos=0.5, yshift = 0.1cm] (bot);
  \filldraw[fill = white, fill opacity =0.7] (0,0) -- (1,1) -- (5,1) -- (4,0) -- cycle;
  \filldraw[fill = white, fill opacity =0.7] (1, 0.5) .. controls +(1, +1) and +(-1, +1) .. (4, 0.5) coordinate[pos=0.5, yshift = -0.1cm] (top);
  \draw (2.5, 0.5) circle (1.5cm and 0.2cm);
  \node at (2.5, -0.7) {$G$};
  \draw [<-, very thin] (bot) .. controls +(0, -0.3) and +(0.3, 0) .. +(-1.5, -0.6) node [left] {$Q\in \QQ[x_1, \dots x_b]^{\sg[b]}$};
  \draw [<-, very thin] (top) .. controls +(0, 0.3) and +(0.3, 0) .. +(-1.5, 0.6) node [left] {$P \in \QQ[x_1, \dots x_a]^{\sg[a]}$};
  \coordinate (a) at (5, 0.5);
\end{scope}

\begin{scope}[xshift = 7cm]
  \filldraw[fill = white, fill opacity =0.7] (0,0) -- (1,1) -- (5,1) -- (4,0) -- cycle;
    \draw [<-, very thin] (1.5,0.5) .. controls +(0, +0.5) and +(-1, 0) .. +(-1, 1.7) node [right, yshift= -0.2 cm] {$\displaystyle{R= \sum_{\substack{I\sqcup J = \{1, \dots, a+b\} \\ \#I = a, \#J =b}} \frac{P(x_I)Q(x_J)}{\prod_{\substack{i \in I \\ j \in J}}(x_i - x_j)}}$};

    \node at (2.5, -0.7) {$G'$};
    \coordinate (b) at (0, 0.5);
\end{scope}
\node at (6, 0.5) {$=$};

\end{scope}

%% file: hhf_2-sided-dish-2.tex
\begin{scope}[scale=0.7]
\begin{scope}
  \filldraw[fill = white, fill opacity =0.7, yshift =-0.7cm] (0,0) -- (1,1) -- (5,1) -- (4,0) -- cycle;
  \filldraw[fill = white, fill opacity =0.7, yshift =0.7 cm] (0,0) -- (1,1) -- (5,1) -- (4,0) -- cycle;
  \draw (2.5, 0.5) circle (1.5cm and 0.1cm);
  \draw[very thin] (4, 0.5) .. controls +(0.3, 0.3) and +(-0.5, -0.5) .. (4.5, 1.2);
  \draw[very thin] (4, 0.5) .. controls +(0.3, -0.3) and +(-0.3, -0.3) .. (4.5, -0.2);
  \draw[very thin] (1, 0.5) .. controls +(-0.3, 0.3) and +(0.2, 0.2) .. (0.5, 1.2);
  \draw[very thin] (1, 0.5) .. controls +(-0.3, -0.3) and +(0.3, 0.3) .. (0.5, -0.2);

  \coordinate (a) at (5, 0.5);
\end{scope}

\begin{scope}[xshift = 9cm]
  \filldraw[fill = white, fill opacity =0.7, yshift = -0.7cm] (0,0) -- (1,1) -- (5,1) -- (4,0) -- cycle;
  \filldraw[fill = white, fill opacity =0.7, yshift = +0.7cm] (0,0) -- (1,1) -- (5,1) -- (4,0) -- cycle;
   \draw [<-, very thin] (2.5,1.2) .. controls +(0, +0.5) and +(-0.3, 0) .. +(1.5, 1 ) node [right] {$\displaystyle{s_\lambda}$};
   \draw [<-, very thin] (2.5,-0.2) .. controls +(0, -0.5) and +(-0.3, 0) .. +(1.5, -1 ) node [right] {$\displaystyle{s_{\widehat{\lambda}}}$};
   \node (c) at (-1.5, 0.5) {$\displaystyle{\sum_{\lambda \in T(a,b)}} (-1)^{|\widehat{\lambda}|}$};
    \coordinate (b) at (-3, 0.5);
\end{scope}
\node at (6, 0.5) {$=$};

\end{scope}

%% file: hhf_assoc.tex
\begin{scope}
  \begin{scope}
    \draw[->] (0,0) .. controls +(0, 0.2) and +(0, -0.2) ..
    (-0.25, 0.5) .. controls +(0, 0.2) and +(0, -0.2) ..
    (-.5,1) node[above] {$a$};
    \draw[->] (0,0) .. controls +(0, 0.2) and +(0, -0.2) ..
    ( .5,1) node[above] {$c$};
    \draw[->] 
    (-0.25, 0.5) .. controls +(0, 0.2) and +(0, -0.2) ..
    ( 0 ,1) node[above] {$b$};
    \draw[-<] (0,0) -- (0, -0.25) node[below] {$a+b+c$};
  \end{scope}
  \node at (1.3, 0.25) {$\displaystyle{\simeq}$};
    \begin{scope}[xshift = 2.6cm]
    \draw[->] (0,0) .. controls +(0, 0.2) and +(0, -0.2) ..
    (-.5,1) node[above] {$a$};
    \draw[->] (0,0) .. controls +(0, 0.2) and +(0, -0.2) ..
    (0.25, 0.5) .. controls +(0, 0.2) and +(0, -0.2) ..
    ( .5,1) node[above] {$c$};
    \draw[->] 
    (0.25, 0.5) .. controls +(0, 0.2) and +(0, -0.2) ..
    ( 0 ,1) node[above] {$b$};
    \draw[-<] (0,0) -- (0, -0.25) node[below] {$a+b+c$};      
    \end{scope}
\end{scope}


%% file: hhf_co-assoc.tex
\begin{scope}[yscale=-1]
  \begin{scope}
    \draw[-<] (0,0) .. controls +(0, 0.2) and +(0, -0.2) ..
    (-0.25, 0.5) .. controls +(0, 0.2) and +(0, -0.2) ..
    (-.5,1) node[below] {$a$};
    \draw[-<] (0,0) .. controls +(0, 0.2) and +(0, -0.2) ..
    ( .5,1) node[below] {$c$};
    \draw[-<] 
    (-0.25, 0.5) .. controls +(0, 0.2) and +(0, -0.2) ..
    ( 0 ,1) node[below] {$b$};
    \draw[->] (0,0) -- (0, -0.25) node[above] {$a+b+c$};
  \end{scope}
  \node at (1.3, 0.25) {$\displaystyle{\simeq}$};
    \begin{scope}[xshift = 2.6cm]
    \draw[-<] (0,0) .. controls +(0, 0.2) and +(0, -0.2) ..
    (-.5,1) node[below] {$a$};
    \draw[-<] (0,0) .. controls +(0, 0.2) and +(0, -0.2) ..
    (0.25, 0.5) .. controls +(0, 0.2) and +(0, -0.2) ..
    ( .5,1) node[below] {$c$};
    \draw[-<] 
    (0.25, 0.5) .. controls +(0, 0.2) and +(0, -0.2) ..
    ( 0 ,1) node[below] {$b$};
    \draw[->] (0,0) -- (0, -0.25) node[above] {$a+b+c$};      
    \end{scope}
\end{scope}


%% file: hhf_digon.tex
\begin{scope}
  \begin{scope}
    \draw[>->, ->-] (0,0) --
    (0, 0.25) .. controls+ (0, 0.2) and +(0, -0.2) ..
    (-0.25, 0.75) node [left] {$a$} .. controls+ (0, 0.2) and +(0, -0.2) ..
    (0, 1.25) -- (0,1.5) node[above] {$a+b$};
    \draw[ ->-] 
    (0, 0.25) .. controls+ (0, 0.2) and +(0, -0.2) ..
    (0.25, 0.75) node[right] {$b$} .. controls+ (0, 0.2) and +(0, -0.2) ..
    (0, 1.25); 
  \end{scope}
  \node at (1.5, 0.75) {$\displaystyle{\simeq \,\, \begin{bmatrix} a+b \\ a \end{bmatrix}}$};
  \begin{scope}[xshift = 2.8cm]
    \draw[ ->] (0,0) -- (0, 1.5) node [above] {$a+b$};
    \end{scope}
\end{scope}


%% file: hhf_square-1.tex
\begin{scope}
  \begin{scope}
    \draw[>->, ->-] (0,0) -- +(0,2) node [pos= 0, below] {$a$};
    \draw[>->, ->-] (1,0) -- +(0,2) node [pos= 0, below] {$b$};
    \draw[->-] (1, 0.3) .. controls + (0, 0.2) and +(0, -0.2) .. (0, 0.7) node[pos=0.5, below] {$c$};
    \draw[->-] (0, 1.3) .. controls + (0, 0.2) and +(0, -0.2) .. (1, 1.7) node[pos=0.5, above] {$d$};
  \end{scope}
  \node at (3, 0.75) {$\displaystyle{\simeq \,\,\bigoplus_{i=0}^d \begin{bmatrix} b-a +d-c \\ d-i
                                                         \end{bmatrix}}$};
  \begin{scope}[xshift = 5cm]
    \draw[>->, ->-] (0,0) -- +(0,2) node [pos= 0, below] {$a$};
    \draw[>->, ->-] (1,0) -- +(0,2) node [pos= 0, below] {$b$};
    \draw[->-] (0, 0.3) .. controls + (0, 0.2) and +(0, -0.2) .. (1, 0.7) node[pos=0.5, below] {$i$};
    \draw[->-] (1, 1.3) .. controls + (0, 0.2) and +(0, -0.2) .. (0, 1.7) node[pos=0.5, above, font =\tiny,  sloped] {$c+i-d$};
  \end{scope}
\end{scope}


%% file: hhf_square-2.tex
\begin{scope}
  \begin{scope}
    \draw[>->, ->-] (0,0) -- +(0,2) node [pos= 0, below] {$a$};
    \draw[>->, ->-] (1,0) -- +(0,2) node [pos= 0, below] {$b$};
    \draw[->-] (0, 0.3) .. controls + (0, 0.2) and +(0, -0.2) .. (1, 0.7) node[pos=0.5, below] {$c$};
    \draw[->-] (1, 1.3) .. controls + (0, 0.2) and +(0, -0.2) .. (0, 1.7) node[pos=0.5, above] {$d$};
  \end{scope}
  \node at (3, 0.75) {$\displaystyle{\simeq \,\,\bigoplus_{i=0}^d \begin{bmatrix} a-b +d-c \\ d-i
                                                         \end{bmatrix}}$};
  \begin{scope}[xshift = 5cm]
    \draw[>->, ->-] (0,0) -- +(0,2) node [pos= 0, below] {$a$};
    \draw[>->, ->-] (1,0) -- +(0,2) node [pos= 0, below] {$b$};
    \draw[->-] (1, 0.3) .. controls + (0, 0.2) and +(0, -0.2) .. (0, 0.7) node[pos=0.5, below] {$i$};
    \draw[->-] (0, 1.3) .. controls + (0, 0.2) and +(0, -0.2) .. (1, 1.7) node[pos=0.5, above, font =\tiny, sloped] {$c+i-d$};
  \end{scope}
\end{scope}


%% file: hhf_annular-foam-example.tex
\begin{scope}[scale=1]
  \tdplotsetmaincoords{80}{70}
  \begin{scope}[tdplot_main_coords]
  \begin{scope} [canvas is xy plane at z=0]
    \begin{scope}[very thin]
      \draw (0,0) circle (3cm);
      \draw (0,0) circle (1cm);
      \coordinate (a) at (70:3cm);
      \coordinate (aa) at (250:3cm);
    \end{scope}

    \begin{scope}
      \draw (0,0) circle (2.2);
      \draw (0,0) circle (1.7);
      \draw (60:2.7) arc (60: 180: 2.7cm);
      \draw (240:2.7) arc (240: 360: 2.7cm);
      \draw (180: 1.2) arc (180: 420: 1.2cm);
      \draw (180:2.7) .. controls +(270: 0.3) and
      +(100:0.3) .. (200:2.2);
      \draw (220:2.2) .. controls +(310: 0.3) and
      +(150:0.3) .. (240:2.7);
      \draw (0:2.7) .. controls +(90: 0.3) and
      +(-70:0.3) .. (20:2.2);
      \draw (40:2.2) .. controls +(130: 0.3) and
      +(-30:0.3) .. (60:2.7);
      \draw (60:1.2) .. controls +(150: 0.3) and
      +(-10:0.3) .. (80:1.7);
      \draw (160:1.7) .. controls +(250: 0.3) and
      +(90:0.3) .. (180:1.2);
      \draw[rotate=20] (30:1.7) .. controls +(120:0.3) and
      +(-40: 0.3) .. (50:2.2);
      \draw[rotate=90] (30:1.7) .. controls +(120:0.3) and
      +(-40: 0.3) .. (50:2.2);
      \draw[rotate=270] (30:1.7) .. controls +(120:0.3) and
      +(-40: 0.3) .. (50:2.2);
      \draw[rotate=50] (30:2.2) .. controls +(120:0.3) and
      +(-40: 0.3) .. (50:1.7);
      \draw[rotate=230] (30:2.2) .. controls +(120:0.3) and
      +(-40: 0.3) .. (50:1.7);
      \coordinate (A) at (110:1);
      \coordinate (B) at (110:3);
      \coordinate (E) at (210:1);
      \coordinate (F) at (210:3);
      \coordinate (B1) at (110: 1.2);
      \coordinate (B2) at (110: 1.7);
      \coordinate (B3) at (110: 2.2);
      \coordinate (B4) at (110: 2.7);
      \coordinate (C1) at (210: 1.2);
      \coordinate (C2) at (210: 1.7);
      \coordinate (C3) at (210: 2.2);
      \coordinate (C4) at (210: 2.7);
    \end{scope}

  \end{scope}
    \begin{scope} [canvas is xy plane at z=2]
    \begin{scope}[very thin]
      \coordinate (T1) at (110: 1.2);
      \coordinate (T2) at (110: 1.7);
      \coordinate (T3) at (110: 2.2);
      \coordinate (T4) at (110: 2.7);

      \coordinate (S1) at (210: 1.2);
      \coordinate (S2) at (210: 1.7);
      \coordinate (S3) at (210: 2.2);
      \coordinate (S4) at (210: 2.7);
      \coordinate (b) at (70:3cm);
      \coordinate (bb) at (250:3cm);

      \draw (0,0) circle (3cm);
      \draw (0,0) circle (1cm);
      \draw (a) -- (b);
      \draw (aa) -- (bb);
      \coordinate (D) at (110:1);
      \coordinate (C) at (110:3); 
      \fill[opacity = 0.2, red] (A) -- (B) -- (C) -- (D);
      \coordinate (H) at (210:1);
      \coordinate (G) at (210:3); 
      \fill[opacity = 0.2, green!50!black] (E) -- (F) -- (G) -- (H);
    \end{scope}
  \end{scope}
  \begin{scope}[draw= red!70!black, very thick]
  \draw (B2) -- +(0,0, 0.3) coordinate (c1) .. controls +(0,0,0.2) and + (0,0,-0.2) .. +(0,0,1.2) coordinate (c2)  +(0,0,1.5)  .. controls +(0,0,0.2) and + (0,0,-0.2) ..  (T1);
  \draw (B4) -- (T4) coordinate [pos=0.2] (G1) coordinate [pos =0.5] (G2) coordinate [pos= 0.7] (G3);
  \draw (B3) -- (T3) coordinate [pos=0.2] (F1) coordinate [pos =0.5] (F2) coordinate [pos= 0.6] (F3);
  \draw (T2) -- + (0,0, -0.2) coordinate (D3) .. controls +(0,0,-0.2) and + (0,0,0.2) .. (c1) coordinate[pos =0.7] (D2) coordinate[pos =0.3] (D1);
  \draw (G1) .. controls +(0,0,0.2) and + (0,0,-0.2) .. (F2);
  \draw (F1) .. controls +(0,0,0.2) and + (0,0,-0.2) .. (D2);
  \draw (F3) .. controls +(0,0,0.2) and + (0,0,-0.2) .. (G3);
\end{scope}
  \begin{scope}[draw= green!50!black, very thick]
    \draw (S2) -- +(0,0, -0.8) coordinate (i) coordinate[pos=0.5] (h);
    \draw (C1) -- +(0,0,0.8) coordinate[pos=0.7] (m) .. controls +(0,0,0.2) and + (0,0,-0.2) .. (i);
    \draw (C3) -- +(0,0,1) coordinate (j) coordinate[pos=0.5] (k) .. controls +(0,0,0.2) and + (0,0,-0.2) .. (S4);
    \draw (C2) -- +(0,0,0.2) coordinate (l) .. controls +(0,0,0.2) and + (0,0,-0.2) .. (k);
    \draw (l) .. controls +(0,0,0.2) and + (0,0,-0.2) .. (m);
    \draw (C3) --(S3);
\end{scope}

    \begin{scope} [canvas is xy plane at z=2]

    \begin{scope}[rotate = 180, xscale = -1]
            \draw (0,0) circle (2.2);
      \draw (0,0) circle (1.7);
      \draw (60:2.7) arc (60: 180: 2.7cm);
      \draw (240:2.7) arc (240: 360: 2.7cm);
      \draw (180: 1.2) arc (180: 420: 1.2cm);
      \draw (180:2.7) .. controls +(270: 0.3) and
      +(100:0.3) .. (200:2.2);
      \draw (220:2.2) .. controls +(310: 0.3) and
      +(150:0.3) .. (240:2.7);
      \draw (0:2.7) .. controls +(90: 0.3) and
      +(-70:0.3) .. (20:2.2);
      \draw (40:2.2) .. controls +(130: 0.3) and
      +(-30:0.3) .. (60:2.7);
      \draw (60:1.2) .. controls +(150: 0.3) and
      +(-10:0.3) .. (80:1.7);
      \draw (160:1.7) .. controls +(250: 0.3) and
      +(90:0.3) .. (180:1.2);
      \draw[rotate=20] (30:1.7) .. controls +(120:0.3) and
      +(-40: 0.3) .. (50:2.2);
      \draw[rotate=90] (30:1.7) .. controls +(120:0.3) and
      +(-40: 0.3) .. (50:2.2);
      \draw[rotate=270] (30:1.7) .. controls +(120:0.3) and
      +(-40: 0.3) .. (50:2.2);
      \draw[rotate=50] (30:2.2) .. controls +(120:0.3) and
      +(-40: 0.3) .. (50:1.7);
      \draw[rotate=230] (30:2.2) .. controls +(120:0.3) and
      +(-40: 0.3) .. (50:1.7);
    \end{scope}
  \end{scope}
\end{scope}
\end{scope}

%% file: hhf_dot-migration-web.tex
\begin{scope}
  \begin{scope}
    \draw[->] (0:0) -- +(90:1) coordinate[pos=0.5] (t)  node [pos=1, above] {$a+b$};
    \draw[-<] (0:0) .. controls +(90:-0.3) and +(0,0) .. (-120:1) coordinate[pos=0.5] (a)  node[pos=1, below] {$a$};
    \draw[-<] (0:0) .. controls +(90:-0.3) and +(0,0) .. (-60:1) coordinate[pos=0.5] (b)  node[pos=1, below] {$b$};
    \node at (t) {$\bullet$};
    \node[right] at (t) {$P$};
  \end{scope}
  \node at (1.5, 0) {$\displaystyle{= \,\sum_{i}}$};
    \begin{scope}[xshift = 3cm]
    \draw[->] (0:0) -- +(90:1) coordinate[pos=0.5] (t)  node [pos=1, above] {$a+b$};
    \draw[-<] (0:0) .. controls +(90:-0.3) and +(0,0) .. (-120:1) coordinate[pos=0.5] (a)  node[pos=1, below] {$a$};
    \draw[-<] (0:0) .. controls +(90:-0.3) and +(0,0) .. (-60:1) coordinate[pos=0.5] (b)  node[pos=1, below] {$b$};
    \node at (a) {$\bullet$};
    \node[left] at (a) {$Q^{(1)}_i$};
    \node at (b) {$\bullet$};
    \node[right] at (b) {$Q^{(2)}_i$};
  \end{scope}
\end{scope}


%% file: hhf_dot-migration-web-2.tex
\begin{scope}[yscale=-1]
  \begin{scope}
    \draw[-<] (0:0) -- +(90:1) coordinate[pos=0.5] (t)  node [pos=1, below] {$a+b$};
    \draw[->] (0:0) .. controls +(90:-0.3) and +(0,0) .. (-120:1) coordinate[pos=0.5] (a)  node[pos=1, above] {$a$};
    \draw[->] (0:0) .. controls +(90:-0.3) and +(0,0) .. (-60:1) coordinate[pos=0.5] (b)  node[pos=1, above] {$b$};
    \node at (t) {$\bullet$};
    \node[right] at (t) {$P$};
  \end{scope}
  \node at (1.5, 0) {$\displaystyle{= \,\sum_{i}}$};
    \begin{scope}[xshift = 3cm]
    \draw[-<] (0:0) -- +(90:1) coordinate[pos=0.5] (t)  node [pos=1, below] {$a+b$};
    \draw[->] (0:0) .. controls +(90:-0.3) and +(0,0) .. (-120:1) coordinate[pos=0.5] (a)  node[pos=1, above] {$a$};
    \draw[->] (0:0) .. controls +(90:-0.3) and +(0,0) .. (-60:1) coordinate[pos=0.5] (b)  node[pos=1, above] {$b$};
    \node at (a) {$\bullet$};
    \node[left] at (a) {$Q^{(1)}_i$};
    \node at (b) {$\bullet$};
    \node[right] at (b) {$Q^{(2)}_i$};
  \end{scope}
\end{scope}


%% file: hhf_std-tree2.tex
\begin{scope}[scale= 0.5, font=\tiny]
  \draw[gray] (-2.5, 5) rectangle (2.5, 0);
  \draw[>->] (0,0) --(0,1) node[pos=0, below] {$\myk$} .. controls +(0, 0.5) and +(0, -0.5) ..  (-2,5)  node[pos=1, above] {$k_1$};
  \draw[->] (0,1) .. controls +(0, 0.5) and +(0, -0.5) .. (0.5,2) .. controls +(0, 0.5) and +(0, -0.5) .. (1.5,4) .. controls +(0, 0.5) and +(0, -0.5) .. (2,5)  node[pos=1, above] {$k_\ell$}; 
  \draw[->] (1.5,4) .. controls +(0, 0.5) and +(0, -0.5) .. (1,5) node[pos=1, above]  {$k_{\ell-1}$};
  \node[above] at (0, 5) {$\cdots$};
  \draw[->] (0.5,2) .. controls +(0, 0.5) and +(0, -0.5) .. (-1,5) node[pos=1, above] {$k_2$};
\end{scope}

%% file: hhf_bl-foam-example.tex
\begin{scope}[scale=2]
  \tdplotsetmaincoords{80}{70}
  \begin{scope}[tdplot_main_coords]
    \coordinate (OOO) at (0, 0, 0);
    \coordinate (IOO) at (1, 0, 0);
    \coordinate (OIO) at (0, 1, 0);
    \coordinate (IIO) at (1, 1, 0);
    \coordinate (OOI) at (0, 0, 1);
    \coordinate (IOI) at (1, 0, 1);
    \coordinate (OII) at (0, 1, 1);
    \coordinate (III) at (1, 1, 1);
    \coordinate (sf) at (+3.5,  0.5,  0.5);
    \coordinate (sh) at (-2.5,  0.5,  0.5);
    \coordinate (sr) at ( 0.5, +2.25,  0.5);
    \coordinate (sl) at ( 0.5, -1.25,  0.5);
    \coordinate (st) at ( 0.5,  0.5, +2.25);
    \coordinate (sb) at ( 0.5,  0.5, -1.25);
    \begin{scope}[very thin]
      \draw (OOO) -- (OOI);
      \draw [densely dotted] (OOO) -- (OIO);
      \draw (OOO) -- (IOO);
      \draw (III) -- (OII);
      \draw (III) -- (IOI);
      \draw (III) -- (IIO);
      \draw (OOI) -- (OII);
      \draw (OOI) -- (IOI);
      \draw [densely dotted] (OIO) -- (OII);
      \draw [densely dotted] (OIO) -- (IIO);
      \draw (IOO) -- (IOI);
      \draw (IOO) -- (IIO);
    \end{scope}
    \begin{scope}
      \draw (0.5, 0, 0) .. controls +(0, 0, 0.2) and +(0, 0, -0.2) .. (0.5, 0, 0.3)
      .. controls +(0, 0, 0.2) and +(0, 0, -0.2) .. (0.7, 0, 0.7)
      .. controls +(0, 0, 0.2) and +(0, 0, -0.2) .. (0.9, 0, 1);
      \draw (0.5, 0, 0.3)
      .. controls +(0, 0, 0.2) and +(0, 0, -0.2) .. (0.1, 0, 1);
       \draw (0.7, 0, 0.7)
      .. controls +(0, 0, 0.2) and +(0, 0, -0.2) .. (0.5, 0, 1);
      \draw (0.5, 0, 1)
      .. controls +(0, +0.2, 0) and  +(0, -0.2, 0) .. (0.3, 0.3, 1)
      .. controls +(0, +0.2, 0) and  +(0, -0.2, 0) .. (0.5, 0.7, 1)
      .. controls +(0, +0.2, 0) and  +(0, -0.2, 0) .. (0.9, 0.9, 1) -- (0.9,1,1);
      \draw (0.1, 0, 1)
      .. controls +(0, +0.2, 0) and  +(0, -0.2, 0) .. (0.3, 0.3, 1);

      \draw (0.9, 0,1) -- (0.9, 1,1);
      \draw (0.9, 0.2, 1)
      .. controls +(0, +0.2, 0) and  +(0, -0.2, 0) .. (0.5, 0.7, 1);

      \draw (0.3, 0.3, 1)  .. controls +(0, +0.2, 0) and  +(0, -0.2, 0) .. (0.1, 1, 1);
      \draw (0.5, 0.7, 1)  .. controls +(0, +0.2, 0) and  +(0, -0.2, 0) .. (0.5, 1, 1);
      \begin{scope}[gray]
      \draw (0.5, 0, 0) -- (0.5, 1, 0);
      \draw (0.5, 1, 0) .. controls +(0, 0, 0.2) and +(0, 0, -0.2) .. (0.5, 1, 0.3)
      .. controls +(0, 0, 0.2) and +(0, 0, -0.2) .. (0.7, 1, 0.7)
      .. controls +(0, 0, 0.2) and +(0, 0, -0.2) .. (0.9, 1, 1);
      \draw (0.5, 1, 0.3)
      .. controls +(0, 0, 0.2) and +(0, 0, -0.2) .. (0.1, 1, 1);
       \draw (0.7, 1, 0.7)
       .. controls +(0, 0, 0.2) and +(0, 0, -0.2) .. (0.5, 1, 1);
     \end{scope}
     
    \end{scope}

  \end{scope}
\end{scope}


%% file: hhf_stackingFE-2.tex
\begin{scope}[scale=1.4]

\begin{scope}
\draw[densely dashed, ->] (2.7, 0.5) -- +(1.1, 0) node[midway, above, font=\tiny] {stack};
\draw[densely dashed, ->] (5.2, 0.5) -- +(1.1, 0) node[midway, above, font=\tiny] {deform};
\end{scope}
  \begin{scope}
  \filldraw[fill=red!50!white] (0,0) rectangle +(1,1) node[midway, yshift=-1cm] {$F$};
  \filldraw[fill=green!50!white] (1.5,0) rectangle +(1,1) node[midway, yshift= -1cm] {$E$};
\end{scope}
\begin{scope}[xshift = 4cm]
  \filldraw[fill=red!50!white] (0,0.5) rectangle +(1,0.5);
  \filldraw[fill=green!50!white] (0,00) rectangle +(1,0.5);
\end{scope}
\begin{scope}[xshift = 6.5cm]
  \filldraw[fill=red!50!white] (0,1) -- ++(1,0) arc (0:-180:0.5); 
  \filldraw[fill=green!50!white] (0,1) -- ++(0,-1) -- ++(1,0) -- ++(0,1) arc (0:-180:0.5) node[midway, yshift= -1cm] {$F(E)$};
\end{scope}

\end{scope}

%% file: hhf_defoamation.tex
\begin{scope}[scale=2]
\begin{scope}
  \filldraw[fill=green!50!white] (0,0) rectangle +(1,1);
  \foreach \i in{0.1, 0.2, 0.3, 0.4}{
      \draw[densely dotted] (\i, 0) -- +(0,1);
      \draw[densely dashed, xshift= 0.5cm] (\i, 0) -- +(0,1);
    }
    \node[below] at (0.25, 0) {$\web_0$};
    \node[below] at (0.75, 0) {$\web_1$};
    \node[above] at (0.25, 1) {$\web_2$};
    \node[above] at (0.75, 1) {$\web_3$};
  \end{scope}

\begin{scope}[xshift= 2cm]
  \filldraw[fill=green!50!white] (0,0) rectangle +(1,1);
  \foreach \i in{0.1, 0.2, 0.3, 0.4}{
      \draw[densely dotted, xshift=0.5cm] (0:\i cm) arc (0:180:\i cm) +(0,1);
      \draw[densely dashed, yshift= 1cm, xshift=0.5cm] (0:\i cm) arc (0:-180:\i cm);
    }
    \node[below] at (0.25, 0) {$\web_2^\dagger$};
    \node[below] at (0.75, 0) {$\web_0$};
    \node[above] at (0.25, 1) {$\web_3$};
    \node[above] at (0.75, 1) {$\web_1^\dagger$};
  \end{scope}

\end{scope}


%% file: hhf_bubble-1.tex
\begin{scope}[scale=0.7]
\begin{scope}
  \draw (1, 0.5) .. controls +(1, -1) and +(-1, -1) .. (4, 0.5) coordinate[pos=0.5, yshift = 0.1cm] (bot);
  \filldraw[fill = white, fill opacity =0.7] (0,0) -- (1,1) -- (5,1) -- (4,0) -- cycle;
  \filldraw[fill = white, fill opacity =0.7] (1, 0.5) .. controls +(1, +1) and +(-1, +1) .. (4, 0.5) coordinate[pos=0.5, yshift = -0.1cm] (top);
  \draw (2.5, 0.5) circle (1.5cm and 0.2cm);
  \node at (2.5, -0.7) {$G$};
  \draw [<-, very thin] (bot) .. controls +(0, -0.3) and +(0.3, 0) .. +(-1.5, -0.6) node [left] {$Q\in \QQ[x_1, \dots x_b]^{\sg[b]}$};
  \draw [<-, very thin] (top) .. controls +(0, 0.3) and +(0.3, 0) .. +(-1.5, 0.6) node [left] {$P \in \QQ[x_1, \dots x_a]^{\sg[a]}$};
  \coordinate (a) at (5, 0.5);
\end{scope}

\begin{scope}[xshift = 8cm]
  \filldraw[fill = white, fill opacity =0.7] (0,0) -- (1,1) -- (5,1) -- (4,0) -- cycle;
    \draw [<-, very thin] (1.5,0.5) .. controls +(0, +0.5) and +(-1, 0) .. +(-1, 1.7) node [right, yshift= -0.2 cm] {$\displaystyle{R= \sum_{\substack{I\sqcup J = \{1, \dots, a+b\} \\ \#I = a, \#J =b}} \frac{P(x_I)Q(x_J)}{\prod_{\substack{i \in I \\ j \in J}}(x_i - x_j)}}$};
    \node at (2.5, -0.7) {$G'$};
    \coordinate (b) at (0, 0.5);
\end{scope}
\draw[<->, densely dashed] (a) -- (b) node[above, font =\small, pos=0.5] {$\infty$-equivalence};

\end{scope}

%% file: hhf_2-sided-dish-1.tex
\begin{scope}[scale=0.7]
\begin{scope}
  \filldraw[fill = white, fill opacity =0.7, yshift =-0.7cm] (0,0) -- (1,1) -- (5,1) -- (4,0) -- cycle;
  \filldraw[fill = white, fill opacity =0.7, yshift =0.7 cm] (0,0) -- (1,1) -- (5,1) -- (4,0) -- cycle;
  \draw (2.5, 0.5) circle (1.5cm and 0.1cm);
  \draw[very thin] (4, 0.5) .. controls +(0.3, 0.3) and +(-0.5, -0.5) .. (4.5, 1.2);
  \draw[very thin] (4, 0.5) .. controls +(0.3, -0.3) and +(-0.3, -0.3) .. (4.5, -0.2);
  \draw[very thin] (1, 0.5) .. controls +(-0.3, 0.3) and +(0.2, 0.2) .. (0.5, 1.2);
  \draw[very thin] (1, 0.5) .. controls +(-0.3, -0.3) and +(0.3, 0.3) .. (0.5, -0.2);

  \node at (2.5, -1.2) {$G$};
  \coordinate (a) at (5, 0.5);
\end{scope}

\begin{scope}[xshift = 11cm]
  \filldraw[fill = white, fill opacity =0.7, yshift = -0.7cm] (0,0) -- (1,1) -- (5,1) -- (4,0) -- cycle;
  \filldraw[fill = white, fill opacity =0.7, yshift = +0.7cm] (0,0) -- (1,1) -- (5,1) -- (4,0) -- cycle;
   \draw [<-, very thin] (2.5,1.2) .. controls +(0, +0.5) and +(-0.3, 0) .. +(1.5, 1 ) node [right] {$\displaystyle{s_\lambda}$};
   \draw [<-, very thin] (2.5,-0.2) .. controls +(0, -0.5) and +(-0.3, 0) .. +(1.5, -1 ) node [right] {$\displaystyle{s_{\widehat{\lambda}}}$};
   \node (c) at (-1.5, 0.5) {$\displaystyle{\sum_{\lambda \in T(a,b)}} (-1)^{|\widehat{\lambda}|}$};
    \coordinate (b) at (-3, 0.5);
\end{scope}
\draw[<->, densely dashed] (a) -- (b) node[above, font =\small, pos=0.5] {$\infty$-equivalence};

\end{scope}